\newtheorem{theorem}{Theorem}[section]
\newtheorem{lemma}[theorem]{Lemma}
\newtheorem{proposition}[theorem]{Proposition}
\newtheorem{corollary}[theorem]{Corollary}
\newtheorem{definition}[theorem]{Definition}
\newtheorem{example}[theorem]{Example}
\newtheorem{remark}[theorem]{Remark}
\newtheorem{hypothesis}[theorem]{Hypothesis}
\let\originalleft\left
\let\originalright\right
\renewcommand{\left}{\mathopen{}\mathclose\bgroup\originalleft}
\renewcommand{\right}{\aftergroup\egroup\originalright}
\newcommand{\Tr}{\mathop{\mathrm{Tr}}}
\renewcommand{\d}{\/\mathrm{d}\/}
\def\w{\textbf{W}^{\varepsilon}_{{\theta}^{\varepsilon}}}
\def\L{\mathbb{L}}
\def\A{\mathrm{A}}
\def\C{\mathrm{C}}
\def\f{\boldsymbol{\mathfrak{K}}}
\def\J{\mathrm{J}}
\def\K{\mathrm{K}}
\def\B{\mathrm{B}}
\def\D{\mathrm{D}}
\def\y{\boldsymbol{y}}
\def\E{\mathbb{E}}
\def\Arm{\boldsymbol{\mathcal{E}}}
\def\diver{\mathrm{div} }
\def\X{\mathbb{X}}
\def\x{\boldsymbol{x}}
\def\z{\Upsilon}
\def\u{\boldsymbol{\mathcal{Y}}}
\def\v{\boldsymbol{\mathcal{Z}}}
\def\w{\boldsymbol{\mathcal{X}}}
\def\yfrak{\mathfrak{y}}
\def\yscr{\mathscr{Y}}
\def\W{\mathrm{W}}
\def\M{\mathrm{M}}
\def\N{\mathbb{N}}
\def\V{\mathbb{V}}
\def\O{\mathbb{Q}}
\def\wi{\widetilde}
\def\H{\mathbb{H}}
\newcommand{\R}{\mathbb{R}}
\renewcommand{\d}{\/\mathrm{d}\/}
\newcommand{\Addresses}{{
		\footnote{
			\noindent \textsuperscript{1}Center for Mathematics and Applications (NOVA Math), NOVA School of Science and Technology (NOVA FCT),	Portugal.\par\nopagebreak
			\noindent 
			
			\textit{e-mail:} \texttt{Kush Kinra: kushkinra@gmail.com, k.kinra@fct.unl.pt.}
			
			\noindent \textsuperscript{*}Corresponding author.
			
			\textit{Key words:} Stochastic third-grade fluids, pullback random attractors, invariant measures, ergodicity, general domains.
			
			Mathematics Subject Classification (2020): Primary 35B41, 35Q35; Secondary 37L55, 37N10, 35R60.

}}}
\begin{document}
	
	\title[Stochastic third-grade fluids on 2D and 3D general domains]{Large time behaviour for a class of 2D and 3D stochastic non-Newtonian fluids of differential types: Attractors and invariant measures
		\Addresses}
	
	\author[K. Kinra]
	{Kush Kinra\textsuperscript{1*}}

	\maketitle
	
	\begin{abstract}

		This study investigates a stochastic version of a class of non-Newtonian fluids governed by third-grade fluid equations, which exhibit complex and highly nonlinear dynamics. In particular, we address the random dynamics and asymptotic behavior of stochastic third-grade fluid equations (STGFEs) driven by a \emph{linear multiplicative It\^o-type white noise} on general domains $\mathbb{Q}\subseteq\mathbb{R}^d$, $d\in\{2,3\}$. We first prove that the non-autonomous STGFEs generate a continuous non-autonomous random dynamical system $\Phi$, and we establish the existence of a pullback absorbing set. Using compact Sobolev embeddings on bounded domains and uniform tail estimates on unbounded domains, we show the pullback asymptotic compactness of $\Phi$, which leads to the existence of pullback random attractors that are compact and attracting in $\mathbb{L}^2(\mathbb{Q})$.  As a consequence, we demonstrate the existence of an invariant measure associated with the STGFEs and, exploiting the linear multiplicative structure of the noise along with the exponential stability of solutions, we prove uniqueness of the invariant measure in the case of zero external forcing. These results are entirely new for STGFEs on general domains, and, in particular, the existence of pullback random attractors with linear multiplicative noise is obtained here for the first time. We further note that, unlike Stratonovich noise, which is widely used in the literature to study random attractors, It\^o noise is more appropriate for domains that do not satisfy the Poincar\'e inequality. Overall, this work resolves several open problems regarding random attractors, invariant measures, and ergodicity for stochastic third-grade fluids on general unbounded domains $\mathbb{Q}\subseteq\mathbb{R}^d$, $d\in\{2,3\}$.
	\end{abstract}

	\section{Introduction} \label{Sec1}\setcounter{equation}{0}
		From a physical standpoint, the concept of an attractor provides a fundamental explanation of how complex fluid flows evolve and stabilize over time. Although the equations governing fluid motion, such as the Navier-Stokes or non-Newtonian flow equations, are highly nonlinear and sensitive to initial conditions, real fluid systems often display a tendency to approach a statistically steady or recurrent state after transient effects decay. This long-term behavior reflects the balance between energy input from external forcing and energy dissipation due to viscosity. The attractor represents the mathematical manifestation of this physical stabilization process, describing the set of all possible states toward which the flow eventually evolves. Studying attractors therefore helps to identify and quantify the persistent structures, coherent patterns, or steady regimes that govern fluid motion at large times. Moreover, the finite-dimensional nature of attractors in many dissipative systems implies that, despite the infinite-dimensional character of the governing equations, the essential dynamics of the flow can be captured by a relatively small number of dominant modes, an observation that aligns closely with experimental and computational findings in fluid mechanics and turbulence theory (see \cite{Robinson2,R.Temam} etc.). Understanding attractors thus provides deep physical insight into how complex flows self-organize, lose memory of initial conditions, and exhibit predictable long-term behavior despite underlying nonlinearities.

	\medskip
	Extensive research on stochastic perturbations of evolution equations has led to the development of the theory of random dynamical systems and random attractors (see \cite{Arnold}). The concept of a pathwise pullback random attractor, which can also be forward attracting in probability, was first introduced in \cite{CF,FS} and further studied in \cite{HCJA}. Since then, the existence and properties of random attractors for various stochastic partial differential equations (SPDEs) have been established in numerous works, see \cite{Han+Zhou_2025,KK+FC2,Kinra+Mohan_2025_JDDE,Wu+Nguyen+Bai} etc. for some recent works. Moreover, \cite{CCLR} highlighted the distinct impacts that different types of stochastic perturbations can have on the long-term behavior of deterministic systems.

\subsection{The Model Being Studied}	
	It is worth emphasizing that, although much of the literature focuses on Newtonian fluids governed by the Navier-Stokes equations, many real-world industrial and biological flows (see \cite{DR95,FR80,yas-fer_JNS} and references therein) do not follow Newton’s law of viscosity and therefore cannot be accurately described by these models. Such fluids often display complex rheological features, such as shear-thinning, shear-thickening, or viscoelastic behavior, that classical Newtonian's law fail to capture. As a result, more sophisticated mathematical models are required to describe their dynamics and predict their behavior in realistic settings. In recent years, non-Newtonian viscoelastic fluids of differential type have attracted significant attention (see, for example, \cite{Cioran2016}). In particular, third-grade fluid models have been employed in a number of simulation studies to better understand the behavior of nanofluids (see for instance \cite{PP19,RHK18}). Nanofluids are engineered suspensions of nanoparticles in a base fluid, such as water, oil, or ethylene glycol, and are known to exhibit enhanced thermal conductivity compared to the base fluid alone, making them highly relevant in technological and microelectronic applications. Therefore, a rigorous mathematical analysis of third-grade fluid equations is essential for understanding the behavior of these fluids.

	\medskip
	We now briefly outline the derivation of the governing equations for non-Newtonian fluids of differential type. For a detailed discussion of the kinematics of such fluids, we refer the reader to \cite{Cioran2016}. Let $\u$ denote the velocity field of the fluid, and introduce the Rivlin-Ericksen kinematic tensors $\Arm_n$ for $n \geq 1$ (see \cite{RE55}), defined by
	\begin{align*}
		\Arm_1(\u)&=\nabla \u+(\nabla \u)^T;	\,\Arm_n(\u)=\dfrac{\D}{\D t} \Arm_{n-1}(\u)+\Arm_{n-1}(\u)(\nabla \u)+(\nabla \u)^T\Arm_{n-1}(\u), \ n=2,3,\cdots,
	\end{align*}
	where $
	\dfrac{\mathrm{D}}{\mathrm{D}t}\equiv
	\dfrac{\partial}{\partial t}+(\u\cdot \nabla)$ is material derivative.
	
	The constitutive law of fluids of grade $n$ reads $\mathbb{T}=-p\mathrm{I} + F(\Arm_1,\cdots,\Arm_n),$ where $\mathbb{T}$ is the Cauchy stress tensor, $p$ is the pressure and $F$ is an isotropic polynomial function of degree $n$, subject to the usual	requirement of material frame indifference,	see	\cite{Cioran2016}. The constitutive law of   third-grade fluids	$(n=3)$ is given by the following equation	 
	
	For fluids of grade $n$, the constitutive relation is $\mathbb{T}=-p\mathrm{I} + F(\Arm_1,\cdots,\Arm_n),$ where $\mathbb{T}$ is the Cauchy stress tensor, $p$ is the pressure and $F$ is an isotropic polynomial function of degree $n$, satisfying the principle of material frame indifference (see \cite{Cioran2016}).
	For third-grade fluids $(n=3)$, this constitutive law can be written as follows:
	\begin{align*}
		\mathbb{T}=-p\mathrm{I}+\nu \Arm_1+\alpha_1\Arm_2+\alpha_2\Arm_1^2+\beta_1 \Arm_3+\beta_2(\Arm_1\Arm_2+\Arm_2\Arm_1)+\beta_3\Tr(\Arm_1^2)\Arm_1,
	\end{align*}
	where $\nu$ is the viscosity and $\alpha_1, \alpha_2, \beta_1, \beta_2, \beta_3$ are material moduli. Recall that, by Newton’s second law, the momentum equations are given by
	$$\dfrac{\mathrm{D}\u}{\mathrm{D}t}=
	\dfrac{\partial\u}{\partial t}+(\u\cdot \nabla) \u=\text{div}(\mathbb{T}).$$ 
	If $\beta_1=\beta_2=\beta_3=0$, the constitutive equations reduce to those of second-grade fluids. It has been demonstrated that the Clausius–Duhem inequality, together with the requirement that the Helmholtz free energy attain a minimum in equilibrium, imposes the following conditions on the viscosity and material moduli:
	\begin{align}\label{secondlaw}
		\nu \geq 0,\quad \alpha_1+\alpha_2=0, \quad \alpha_1\geq 0. 
	\end{align}
	Although second-grade fluids are mathematically easier to handle, rheologists working with various non-Newtonian fluids have not verified the restrictions in \eqref{secondlaw}. They have therefore concluded that the fluids tested are not truly second-grade fluids but instead possess a different constitutive structure. For further details, see \cite{FR80} and the references therein. Following \cite{FR80}, to ensure that the motion of the fluid is consistent with thermodynamics, one must impose that
	\begin{equation}\label{third-grade-paremeters}
		\nu \geq 0, \quad \alpha_1\geq 0, \quad |\alpha_1+\alpha_2 |\leq \sqrt{24\nu\beta}, \quad \beta_1=\beta_2=0, \beta_3=\beta \geq 0.
	\end{equation}	
	Hence, the incompressible third-grade fluid equations read as
	\begin{equation}
		\label{third-grade-fluids-equations}
		\left\{\begin{aligned}
			\partial_t(z(\u))-\nu \Delta \u+(\u\cdot \nabla)z(\u) +\displaystyle\sum_{j=1}^d[z(\u)]_j\nabla \u_j & -(\alpha_1+\alpha_2)\text{div}((\Arm(\u))^2)\\ -\beta \text{div}[\Tr(\Arm(\u)\Arm(\u)^T)\Arm(\u)] + \nabla \mathbf{P}
			& =  \f,  \\
			\diver (\u) & =  0, \\
			z(\u)&:=\u-\alpha_1\Delta \u, \\[0.2cm]
			\Arm(\u) & := \nabla \u+(\nabla \u)^T,
		\end{aligned}\right.
	\end{equation}
	where the viscosity $\nu$ and the material moduli	 $\alpha_1,\alpha_2$, $\beta$ 	verify	\eqref{third-grade-paremeters}.  Note that setting $\alpha_1=\alpha_2=0$	and $\beta=0$ recovers the classical Navier-Stokes equations (NSEs). Mathematically, $n$-grade fluids form a hierarchy of increasing complexity; compared to Newtonian (grade 1) and second-grade fluids, third-grade fluids involve more nonlinear terms and require a more intricate analysis.
	
	\medskip
	
	Let $\O$ be an open and connected subset (may be bounded or unbounded) of $\R^d$, $d\in\{2,3\}$, the boundary of which is sufficiently smooth. In this article,  we consider the system \eqref{third-grade-fluids-equations} under a subset of the physical conditions \eqref{third-grade-paremeters}, specifically
	\begin{equation}\label{third-grade-paremeters-res}
		\nu > 0, \quad \alpha_1= 0, \ \alpha_2=\alpha, \quad |\alpha | < \sqrt{2\nu\beta}, \quad \beta_1=\beta_2=0, \beta_3=\beta > 0,
	\end{equation}	
	in the presence of a non-autonomous deterministic forcing and linear multiplicative scalar white noise, and subject to Dirichlet boundary conditions, the equations take the form for a given $\mathfrak{r}\in\R$
	\begin{equation}\label{equationV1}
		\left\{		\begin{aligned}
			\d	\u - [ \nu \Delta \u - (\u\cdot \nabla)\u+\alpha\text{div}((\Arm(\u))^2) + \beta \text{div}(|\Arm(\u)|^2\Arm(\u))- \nabla \textbf{P}]\d t&=\f \d t +\sigma\u\d \W , &&   \text{in }  \O \times (\mathfrak{r},\infty),\\
			\text{div}\; \u&=0, \quad &&   \text{in } \O \times [\mathfrak{r},\infty),\\
			\u &= \boldsymbol{0}, &&  \hspace{-3mm} \text{on } \partial\O\times [\mathfrak{r},\infty),\\
			\u(x,\mathfrak{r})&=\u_{\mathfrak{r}},  \quad &&  \text{in } \O,
		\end{aligned}\right.
	\end{equation}
	where $\u:\O\times [\mathfrak{r},\infty) \to \R^d$,  $\mathbf{P}:\O\times   [\mathfrak{r},\infty) \to \mathbb{R}$ and	 $\f:\O\times[\mathfrak{r},\infty)\to \R^d $ represent the velocity field, pressure	an	external force, respectively. Here  $\sigma>0$ is known noise intensity, the stochastic integral is understood in the It\^o sense and $\W(t,\omega)$ is the standard scalar Wiener process on some filtered probability space $(\Omega, \mathscr{F}, (\mathscr{F}_{t})_{t\in\R}, \mathbb{P})$. When $\O = \R^d$, the boundary condition $\u = \mathbf{0}$ on $\partial\O \times [\mathfrak{r},\infty)$ is replaced by a decay condition at infinity, that is,
	\begin{align*}
		|\u(x,t)| \to 0 \quad \text{as} \quad |x| \to \infty, \quad \text{for all } \mathfrak{r} \leq t < \infty.
	\end{align*}

\subsection{Review of Existing Studies}
	
	We begin by recalling several foundational results concerning system \eqref{third-grade-fluids-equations} in the case $\alpha_1>0$. In \cite{AC97}, the authors proved the existence of a local-in-time solution in $\H^3(\O)$ for \eqref{third-grade-fluids-equations} ($\alpha_1>0$) on bounded domains equipped with Dirichlet boundary conditions. Subsequently, \cite{Bus-Ift-1} established global-in-time existence of $\H^2$-valued solutions in $\R^d$, $d\in\{2,3\}$, and uniqueness in the two-dimensional setting; the question of uniqueness of $\H^2$-solutions in three-dimensional domain, however, remains unresolved. In \cite{Bus-Ift-2}, the same system was studied under Navier-slip boundary conditions, where the authors obtained global existence for initial data in $\H^2(\R^d)$ for $d\in\{2,3\}$ and again proved uniqueness in dimension two.
	A significant difficulty in analyzing \eqref{third-grade-fluids-equations} ($\alpha_1>0$) arises from the highly nonlinear structure of the model, which prevents one from working with initial data of lower regularity unless additional constraints on the parameters $\alpha_1,\alpha_2,\beta$, and $\nu$ are imposed. In particular, \cite{Paicu2008} proved the existence of a global weak solution in $\R^d$, $d\in\{2,3\}$, by introducing suitable parameter restrictions that allow the use of monotonicity methods for initial data in $\H^1(\R^d)$ when $\alpha_1>0$. That work also established a weak–strong uniqueness principle together with the validity of the corresponding energy equality. The stochastic counterpart of system \eqref{third-grade-fluids-equations} ($\alpha_1>0$) has been studied in the works \cite{Cip+Did+Gue_2021,YT+FC-SPDE} etc. 
	
	\medskip 
	We now turn to the literature concerning system \eqref{third-grade-fluids-equations} with $\alpha_1=0$, that is, \eqref{equationV1} with $\sigma=0$. A natural starting point is the seminal work of Ladyzhenskaya \cite{Ladyzhenskaya67}, who introduced a new model for incompressible viscous fluids in which the viscosity depends on the velocity gradient. This system features nonlinear terms similar to those in \eqref{third-grade-fluids-equations} (with $\alpha_1=0$). In \cite{Hamza+Paicu_2007}, the authors established global well-posedness for system \eqref{third-grade-fluids-equations} (with $\alpha_1=0$) in $\R^3$, assuming divergence-free initial data in $\L^2(\R^3)$. Their approach relied on a monotonicity method along with condition \ref{third-grade-paremeters-res}. The stochastic counterpart of system \eqref{third-grade-fluids-equations} (with $\alpha_1=0$) perturbed by multiplicative Lipschitz noise on bounded domains was treated in \cite{yas-fer_JNS}, where the authors proved well-posedness and also constructed an ergodic invariant measure. More recently, in \cite{KK+FC1}, the author together with Cipriano proved the existence of a unique pullback attractor for system \eqref{third-grade-fluids-equations} (with $\alpha_1=0$) on Poincar\'e domains (both bounded and unbounded). They subsequently extended this work in \cite{KK+FC2} by proving the existence of random attractors for a stochastic version of system \eqref{third-grade-fluids-equations} (with $\alpha_1=0$) with infinite-dimensional additive white noise, again on Poincaré domains.

	\subsection{Challenges and Methodological Approaches}
	
	If one examines the system \eqref{third-grade-fluids-equations} under the parameter conditions \eqref{third-grade-paremeters-res} and includes a linear damping term, namely
		\begin{align*}
		\frac{\partial \u}{\partial t} + \varpi \u - \nu \Delta \u + (\u\cdot \nabla)\u - \alpha\, \mathrm{div}\big((\Arm(\u))^{2}\big) - \beta\, \mathrm{div}\big(|\Arm(\u)|^{2}\Arm(\u)\big) + \nabla \mathbf{P} = \f , \qquad \nabla\!\cdot\u = 0,
	\end{align*}
	where $\varpi>0$, then the existence of global and pullback attractors can be established on arbitrary domains; see, for instance, \cite[p.~306]{R.Temam} for a discussion regarding linear damping. Although system \eqref{equationV1} does not contain such a linear damping mechanism, the application of a Doss-Sussman transformation \cite{Doss_1977,Sussmann_1978} (see \eqref{COV} below) allows us to transform it into an equivalent deterministic system with random coefficients (see \eqref{CTGF} below).  This reformulation plays a crucial role in proving our results on any general domain $\mathcal{O}\subseteq\mathbb{R}^{d}$, $d\in\{2,3\}$. This is an advantage of working with the It\^o formulation in \eqref{equationV1}, rather than the Stratonovich interpretation.

	The Doss-Sussman approach is a classical technique that converts certain stochastic (ordinary or partial) differential equations into deterministic (ordinary or partial) differential equations with random coefficients, thereby enabling a pathwise analysis of solutions. A discussion of the Doss-Sussman transformation is also provided in \cite[Chapter IX]{Revuz+Yor_1999}. This method has been widely used to study SPDEs with linear multiplicative noise; see, e.g., \cite{BL,CGSV,CF,KK+MTM_SEE,KK+MTM_SNSE,MRRZXZ}. We also note that \cite{HV} employed this transformation to obtain global smooth pathwise solutions of the two-dimensional Euler equations and local solutions in three-dimensional domains. Moreover, the particular form of the transformation used in the present paper has been applied previously in \cite{KK+MTM_SEE,KK+MTM_SNSE} to investigate the random dynamics of the two-dimensional stochastic Euler (on bounded domains) and Navier-Stokes (on the whole space $\R^2$) systems perturbed by linear multiplicative It\^o noise, all without adding any artificial damping.

	  In this work, we employ the abstract framework developed in \cite{SandN_Wang} to establish the existence of pullback random attractors for system \eqref{equationV1}. To apply this theory, we first verify that system \eqref{equationV1} generates a non-autonomous random dynamical system $\Phi$ (see Subsection~\ref{NRDS}). We then demonstrate that $\Phi$ admits a random absorbing set and is asymptotically compact. The absorbing set is obtained by deriving suitable uniform energy estimates.
	
	On bounded domains, asymptotic compactness follows from the compact Sobolev embedding $\H^1(\O)\hookrightarrow \L^2(\O)$. In contrast, on unbounded domains this embedding is no longer compact, and alternative techniques must be employed. Two standard approaches in the literature are:
	\begin{itemize}
		\item[1.] Ball’s energy equation method \cite{Ball},
		\item[2.] Wang’s uniform tail estimate method \cite{UTE-Wang}.
	\end{itemize}
	
	In contrast to other fluid systems for which pullback random attractors have been successfully investigated, such as the Navier-Stokes equations \cite{PeriodicWang} and the convective Brinkman-Forchheimer equations \cite{Kinra+Mohan_2023_DIE}, the solutions of system \eqref{equationV1} lack the required weak continuity with respect to initial conditions. This absence of weak continuity prevents the use of Ball’s energy equation approach to obtain asymptotic compactness on unbounded domains. Therefore, in the unbounded domain setting, we instead employ Wang’s uniform tail estimate technique to establish the asymptotic compactness of $\Phi$ (see Lemma~\ref{Asymptotic_UB_GS}).
	
	\medskip
	
	The existence of random attractors helps us to obtain the existence of invariant measures (see \cite[Corollary 4.6]{CF}.  The question of uniqueness of invariant measures for arbitrary forcing terms $\f$ presents substantial difficulties and will not be addressed here. We consider the deterministic forcing term $\f=\mathbf{0}$ and obtain the uniqueness of invariant measure (Dirac measure centered at zero) for any $\nu>0,$ where the linear multiplicative structure of the white noise coefficient and exponential stability of solutions (see Lemma \ref{ExpoStability}) play a crucial role. The uniqueness of invariant measures  for $\f\neq \mathbf{0}$ is still an open problem for system \eqref{equationV1}.

\subsection{Goals of the Study}We now present our main results, which are direct consequences of 
Theorems~\ref{PRA_B}, \ref{PRA_U}, \ref{thm6.3}, and \ref{UEIM}.

\begin{theorem}
	Assume that condition \eqref{third-grade-paremeters-res} holds. Then:
	\begin{enumerate}
		\item[(i)] on bounded domains, system \eqref{equationV1} possesses pullback random attractors whenever the forcing satisfies  $\f\in\mathrm{L}^2_{\mathrm{loc}}(\R;\H^{-1}(\O))$ and there exists a number $\delta\in[0,\frac{\sigma^2}{2})$ such that for every $c>0$,
		\begin{align*}
			\lim_{s\to-\infty}e^{cs}\int_{-\infty}^{0} e^{\delta\zeta}\|\f(\cdot,\zeta+s)\|^2_{\H^{-1}}\d \zeta=0.
		\end{align*}
		
		\item[(ii)] on unbounded domains, system \eqref{equationV1} possesses a unique random attractor provided the forcing satisfies $\f\in\mathrm{L}^2_{\mathrm{loc}}(\R;\L^{2}(\O))$ and there exists a number $\delta\in[0,\frac{\sigma^2}{2})$ such that for every $c>0$,
		\begin{align*}
			\lim_{s\to-\infty}e^{cs}\int_{-\infty}^{0} e^{\delta\zeta}\|\f(\cdot,\zeta+s)\|^2_{2}\d \zeta=0.
		\end{align*}
		
		\item[(iii)] on both bounded and unbounded domains, system \eqref{equationV1} admits an invariant measure, corresponding to $\f\in\H^{-1}(\O)$ and $\f\in\L^{2}(\O)$, respectively.
		\item[(iv)] on both bounded and unbounded domains, system \eqref{equationV1} admits at most one invariant measure for $\f=\boldsymbol{0}$.
	\end{enumerate}
\end{theorem}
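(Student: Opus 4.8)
This proposition collects the conclusions of Theorems~\ref{PRA_B}, \ref{PRA_U}, \ref{thm6.3}, and \ref{UEIM}, so the plan is to run the standard chain \emph{random dynamical system} $\rightarrow$ \emph{pullback absorbing set} $\rightarrow$ \emph{pullback asymptotic compactness} $\rightarrow$ \emph{pullback random attractor} $\rightarrow$ \emph{invariant measure}, the key structural input being the Doss--Sussman reduction. First I would apply the change of variables \eqref{COV}, writing $\u(t)=\mathcal{T}(t,\omega)\,\v(t)$ with $\mathcal{T}(t,\omega)=\exp\!\big(\sigma\W(t,\omega)-\tfrac{\sigma^{2}}{2}t\big)$, to turn the It\^o system \eqref{equationV1} into the conjugated deterministic-with-random-coefficients system \eqref{CTGF}. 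Pathwise well-posedness of \eqref{CTGF} --- obtained by adapting the deterministic theory of \cite{Hamza+Paicu_2007,KK+FC1} to the (positive, pathwise-continuous) random coefficients --- together with measurable dependence on $\omega$ yields the continuous non-autonomous cocycle $\Phi$ on $\L^{2}(\O)$ over the Wiener shift (Subsection~\ref{NRDS}). The point to stress is that the It\^o correction contributes a genuine linear term $-\tfrac{\sigma^{2}}{2}\v$ in \eqref{CTGF}, which acts exactly like the artificial damping $\varpi\u$ discussed above; hence no Poincar\'e inequality is needed and $\O$ may be any admissible domain in $\R^{d}$, $d\in\{2,3\}$.

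\textbf{Absorbing set.} Next I would test \eqref{CTGF} with $\v$ and use the parameter restrictions \eqref{third-grade-paremeters-res}; the condition $|\alpha|<\sqrt{2\nu\beta}$ is what lets the indefinite term $\alpha\,\mathrm{div}((\Arm(\u))^{2})$ be absorbed into the viscous dissipation together with the monotone $\beta$-term, yielding an energy inequality of the form $\tfrac{\d}{\d t}\|\v\|_{2}^{2}+c\,\|\v\|_{2}^{2}\le C\,\mathfrak{g}(\theta_{t}\omega)+C\,\|\f(t)\|^{2}$ with $\mathfrak{g}$ tempered. A pullback Gronwall argument, combined with the imposed decay hypothesis on $\f$ (chosen precisely so the pullback integral converges) and the sub-exponential growth of $\mathcal{T}$, produces a tempered random absorbing ball in $\L^{2}(\O)$; this is the absorbing-set half of Theorems~\ref{PRA_B} and \ref{PRA_U}.

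\textbf{Asymptotic compactness, attractor, invariant measure.} On bounded domains, pullback asymptotic compactness is immediate from the compact embedding $\H^{1}(\O)\hookrightarrow\L^{2}(\O)$ applied to the $\H^{1}$-bound obtained alongside the absorbing estimate, so \cite{SandN_Wang} gives the pullback random attractor of (i). On unbounded domains the embedding fails, and --- since solutions of \eqref{equationV1} are \emph{not} weakly continuous in the initial datum --- Ball's energy-equation method is unavailable; instead I would multiply \eqref{CTGF} by $\rho(|x|^{2}/k^{2})\,\v$ for a smooth cutoff $\rho$ and show the $\L^{2}$-mass outside a large ball is uniformly small in the pullback limit for $k$ large (Lemma~\ref{Asymptotic_UB_GS}), which together with interior compactness gives asymptotic compactness and hence the attractor of (ii); its uniqueness follows from the characterization as the pullback $\Omega$-limit set of the absorbing set. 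Part (iii) then follows, in the time-independent forcing case, from \cite[Corollary~4.6]{CF}, which converts the random attractor into an invariant measure for the associated Markov semigroup (Theorem~\ref{thm6.3}).

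\textbf{Uniqueness for $\f=\mathbf 0$, and the main obstacle.} When $\f=\mathbf 0$ the conjugated equation \eqref{CTGF} is homogeneous, so the energy inequality above degenerates to $\tfrac{\d}{\d t}\|\v\|_{2}^{2}+c\,\|\v\|_{2}^{2}\le 0$ and every solution decays exponentially to $\mathbf 0$ (Lemma~\ref{ExpoStability}); consequently $\u(t)\to\mathbf 0$ in probability from any initial state, the transition probabilities converge weakly to $\delta_{\mathbf 0}$, and any invariant measure must coincide with $\delta_{\mathbf 0}$ --- this is Theorem~\ref{UEIM}, giving (iv). I expect the genuine difficulty to be the asymptotic compactness on unbounded domains: the third-grade nonlinearities $\mathrm{div}((\Arm(\u))^{2})$ and $\mathrm{div}(|\Arm(\u)|^{2}\Arm(\u))$ produce commutator terms carrying $\nabla\rho$ that must be dominated using only the available $\H^{1}$-bound and the dissipation, and one must carry out this bookkeeping while keeping every constant tempered in $\omega$ --- which is exactly the obstruction that also rules out weak continuity and forces the tail-estimate route rather than Ball's method.
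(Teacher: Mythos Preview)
Your outline is correct and follows the paper's strategy essentially verbatim: Doss--Sussman conjugation, energy estimate exploiting $|\alpha|<\sqrt{2\nu\beta}$ to get a tempered absorbing ball, compact embedding (bounded) versus cut-off tail estimates (unbounded) for asymptotic compactness, then Crauel--Flandoli for the invariant measure and exponential stability for uniqueness at $\f=\mathbf 0$.

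Two small inaccuracies to fix before you write it out. First, the multiplier in \eqref{COV} is \emph{not} $\exp(\sigma\W(t)-\tfrac{\sigma^{2}}{2}t)$ but $\z(t,\omega)=e^{-\sigma y(\vartheta_{t}\omega)}$ with $y$ the stationary Ornstein--Uhlenbeck process \eqref{OU1}; this is what produces the explicit term $\big(\tfrac{\sigma^{2}}{2}-\sigma y(\vartheta_{t}\omega)\big)\v$ in \eqref{CTGF} and, more importantly, is what makes the conjugated flow a genuine cocycle over the Wiener shift (the geometric Brownian motion factor is not stationary and would not yield the RDS structure needed for \cite{SandN_Wang,CF}). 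Second, Lemma~\ref{ExpoStability} in the paper controls the \emph{difference} $\|\u_{1}(t)-\u_{2}(t)\|_{2}$, not a single solution; the difference equation picks up a term $C\z^{-2}\|\Arm(\v_{1})\|_{4}^{2}\,\|\mathfrak X\|_{2}^{2}$ which must be shown to have finite time-integral via the $\beta$-dissipation bound \eqref{ES4}--\eqref{ES6}, so the argument is a bit more than ``the energy inequality degenerates''. Your shortcut (single solution $\to\mathbf 0$, hence $\delta_{\mathbf 0}$ is the only invariant measure) is also valid here since $\mathbf 0$ solves \eqref{STGF} when $\f=\mathbf 0$, but the paper's coupling version is what gives the exponential mixing stated around \eqref{U2}.
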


	\begin{remark}\label{rem-forcing-hypo}
		For the attractor analysis on bounded domains, we work with external forcing $\f$ satisfying assumption in terms of $\H^{-1}$-norm. However, in the case of unbounded domains, we must impose a stronger regularity assumption, namely in terms of $\L^{2}$-norm. If one could establish weak continuity of solutions to system \eqref{equationV1} with respect to the initial data, then, by employing the energy equality method, it would be possible to treat the unbounded domain case under the same regularity assumptions as in the bounded domain setting (see, for example, \cite{PeriodicWang,Kinra+Mohan_2023_DIE} for the Navier-Stokes equations and related models). Unfortunately, we are unable to prove such weak continuity for system \eqref{equationV1}, and therefore the energy equality approach is not applicable.
	\end{remark}

	\begin{remark}\label{rem-weak-conti}
		Compared with the Navier-Stokes equations and convective Brinkman-Forchheimer equations, system \eqref{equationV1} contains stronger nonlinear terms; nevertheless, owing to its structural symmetry, the nonlinear expression 
		\begin{align*}
			\operatorname{div}\big(|\Arm(\v)|^{2}\Arm(\v)\big)
		\end{align*}
		provides a significant regularizing effect, which in particular guarantees uniqueness even in three-dimensional domains. At first glance, one might therefore expect system \eqref{equationV1} to inherit the weak continuity property as well. However, establishing this property turns out to be highly nontrivial.
		
		To explain the difficulty, let $\v_n(\cdot)$ denote the unique solution of \eqref{equationV1} with initial data $\v_{0,n}$ for $n\in\mathbb{N}$. By definition, weak continuity would require that if $\v_{0,n}$ converges weakly to $\v_0$ in $\H$, then the corresponding solutions $\v_n(\cdot)$ converge weakly in $\H$ to $\v(\cdot)$, the solution of \eqref{equationV1} with initial data $\v_0$. However, because of the gradient-type nonlinearities in the system, passing to the limit as $n\to\infty$ in \eqref{equationV1} necessitates an application of the monotonicity method, which in turn requires strong convergence of the initial data. This method does not seem compatible with proving weak continuity of solutions for \eqref{equationV1}. 
		
		In contrast, for the Navier-Stokes equations and convective Brinkman-Forchheimer equations, the corresponding limit process can be carried out without relying on monotonicity arguments, making weak continuity accessible in that setting.
	\end{remark}

	The existence of pullback, global, or random attractors on general unbounded domains remains an open question. To the best of the author’s knowledge, the long-time dynamics of solutions to system \eqref{equationV1} have not yet been explored in either bounded or unbounded domains. Likewise, the study of invariant measures for this subclass on general unbounded domains is carried out here for the first time. It is worth noting that in two- and three-dimensional bounded domains, this class of non-Newtonian fluids was analyzed in \cite{yas-fer_JNS}, where the authors established well-posedness and proved the existence of an ergodic invariant measure for multiplicative Lipschitz noise. However, that work did not address the random dynamical behavior of the solutions. In bounded domains, our existence result for random attractors directly implies a special case of the findings in \cite{yas-fer_JNS}.

	\subsection{Structure of the Paper} 
	In the next section, we introduce the necessary function spaces required for the subsequent analysis, along with the linear and nonlinear operators needed to formulate an abstract version of system \eqref{equationV1}. Additionally, we present an abstract formulation of \eqref{equationV1}, a pathwise deterministic system \eqref{CTGF} equivalent to \eqref{equationV1}, the associated random dynamical system $\Phi$ (see \eqref{Phi}), and the universe of tempered sets $\mathfrak{D}$.	In Section~\ref{Sec3}, we establish that the random dynamical system $\Phi$ admits a $\mathfrak{D}$-pullback random absorbing set (Lemma~\ref{PAS}). Section~\ref{Sec4} is devoted to proving the existence of $\mathfrak{D}$-pullback random attractors for $\Phi$ on bounded domains (Theorem~\ref{PRA_B}). The corresponding result for general unbounded domains is obtained in Section~\ref{Sec5} (Theorem~\ref{PRA_U}). 	In the final section, we demonstrate the existence of an invariant measure for system \eqref{equationV1} (Theorem~\ref{thm6.3}). Furthermore, we establish the exponential stability of solutions of \eqref{equationV1} when $\f=\mathbf{0}$ (Lemma~\ref{ExpoStability}), and based on this result, we prove the uniqueness of the invariant measure in the case $\f=\mathbf{0}$ (Theorem~\ref{UEIM}).
	
	\section{Functional framework and auxiliary results}\label{Sec2}\setcounter{equation}{0}

	In this section, we introduce the functional framework and operators needed for the abstract formulation of system \eqref{equationV1}. We also collect auxiliary results on the Ornstein–Uhlenbeck process and several useful inequalities that will support later analysis and help to show that system \eqref{equationV1} generates a non-autonomous random dynamical system. Finally, we define a universe of tempered  random sets.

	\subsection{Notations and the functional setting }
	Let	$m\in	\mathbb{N}^*:=\N\cup\{\infty\}$	and	$1\leq	p<	\infty$,	we	denote	by	$\mathbb{W}^{m,p}(\O)$ (resp. $\mathrm{W}^{m,p}(\O)$) the standard Sobolev space of matrix/vector-valued (resp. scalar-valued) functions	whose weak derivative up to order $m$	belong	to	the	Lebesgue	space	$\L^p(\O)$ (resp. $\mathrm{L}^p(\O)$) and set	$\H^m(\O)=\mathbb{W}^{m,2}(\O)$	and	$\H^0(\O)=\L^2(\O)$.

	Let us first define the space $\mathscr{V}:=\{\v \in (\mathrm{C}^\infty_c(\O))^d \,: \text{ div}\v=0\}$. Then, we denote the spaces $\H$, $\V$ and ${\L}^{p}_{\sigma}$ ($1<p<\infty$) as the closure of $\mathscr{V}$ in $\L^2(\O)$, $\H^1(\O)$ and $\L^p(\O)$ ($1<p<\infty$), respectively. For the characterization of spaces, we refer readers to \cite[Chapter 1]{Temam_1984}.

	Next, let us introduce  the scalar product between two matrices $A:B=\Tr(AB^T)$
	and denote $\vert A\vert^2:=A:A.$
	The divergence of a  matrix $A\in \mathcal{M}_{d\times d}(E)$ is given by 
	$\left\{\text{div}(A)_i\right\}_{i=1}^{i=d}=\left\{\displaystyle\sum_{j=1}^d\partial_ja_{ij}\right\}_{i=1}^{i=d}. $
	The space $\H$ is endowed with the  $\L^2$-inner product $(\cdot,\cdot)$ and the associated norm $\Vert \cdot\Vert_{2}$. On the functional space $\V$, we will consider  the following inner product $(\u,\v)_{\V}:= (\u,	\v) + (\nabla	\u,\nabla	\v),$ and denote by $\Vert \cdot\Vert_{\V}$ the corresponding norm. The usual norms on the classical Lebesgue and Sobolev spaces $\mathrm{L}^p(\O)$ (resp. $\mathbb{L}^p(\O)$) and $\mathrm{W}^{m,p}(\O)$ (resp. $\mathbb{W}^{m,p}(\O)$) will be denoted by $\|\cdot \|_p$ and 
	$\|\cdot\|_{\mathrm{W}^{m,p}}$ (resp. $\|\cdot\|_{\mathbb{W}^{m,p}}$), respectively.
	In addition, given a Banach space $E$, we will denote by $E^\prime$ its dual, and we denote	by	$\langle	\cdot,\cdot\rangle$ the duality pairing between $E^{\prime}$ and $E$.

	Let	us	introduce	the	following	Banach	space	$(\X:=\mathbb{W}_0^{1,4}(\O)\cap \V,\Vert	\cdot\Vert_{\X} :=\Vert	\cdot\Vert_{\mathbb{W}^{1,4}}+\|\cdot\|_{\V}.$
	Indeed,	we	recall	that	$\mathbb{W}_0^{1,4}(\O)$	endowed	with $\Vert	\cdot\Vert_{\mathbb{W}^{1,4}}$-norm	is a Banach	space,	where		$$\Vert	\w \Vert_{\mathbb{W}^{1,4}}^4=\int_{\O}	\vert	\w(x)\vert^4 \d x+\int_{\O} \vert	\nabla	\w(x)\vert^4 \d x.$$

	\begin{lemma}
		In this lemma, we recall a couple to inequalities which will be used in the sequel:
		\begin{itemize}
			\item   The well-known Gagliardo-Nirenberg inequality  (\cite[Theorem 1]{Nirenberg_1959}) gives (for $d\in \{2,3\}$)
			\begin{align}
				\|\w\|_{{3}} &\leq C \|\w\|^{\frac{12+d}{3(4+d)}}_{{2}} \|\nabla \w\|^{\frac{2d}{3(4+d)}}_{{4}}, && \quad \text{	for all } \; \w\in \L^{2} (\O)\cap\mathbb{W}_0^{1,4}(\O), \label{Gen_lady-3}\\
				\|\w\|_{{4}} &\leq C \|\w\|^{\frac{4}{4+d}}_{{2}} \|\nabla \w\|^{\frac{d}{4+d}}_{{4}}, && \quad \text{	for all } \; \w\in \L^{2} (\O)\cap\mathbb{W}_0^{1,4}(\O).\label{Gen_lady-4}
			\end{align}
			\item  There exists a constant $C_{S,d}>0$ (\cite[Subsection 2.4]{Kesavan_1989}) such that 
			\begin{align}\label{Sobolev-embedding3}
				\|\w\|_{\infty} \leq C_{S,d} \|\nabla\w\|_{4}, \quad \text{	for all } \;	\w\in	\mathbb{W}_0^{1,4}(\O).
			\end{align}
			
			\item  There	exists a constant $C_{K,d}>0$	such	that (a Korn-type inequality, see	\cite[Theorem 2 (ii)]{DiFratta+Solombrino_Arxiv} and \cite[Theorem 4, p.90]{Kondratev+Oleinik_1988})
			\begin{align}\label{Korn-ineq}
				\Vert	\nabla \w \Vert_{4}	\leq	C_{K,d}\Vert	\Arm(\w)	\Vert_4,\quad \text{	for all } \;	\w\in	\mathbb{W}_0^{1,4}(\O).
			\end{align}		
			\item In view of \eqref{Sobolev-embedding3} and \eqref{Korn-ineq}, there exists a constant $\mathrm{M}_d>0$ such that
			\begin{align}\label{Sobolev+Korn}
				\|\w\|_{\infty} \leq \mathrm{M}_d \|\Arm(\w)\|_{4}, \quad \text{	for all } \;	\w\in	\mathbb{W}_0^{1,4}(\O).
			\end{align}
		
		\item In view of \eqref{Gen_lady-4}, \eqref{Korn-ineq} and Young's inequality, there exists a constant $\mathrm{N}_d>0$ such that
		\begin{align}\label{Gag-Korn-ineq}
			\|\w\|^4_4 \leq \mathrm{N}_d\left[ \|\Arm(\w)\|_{4}^4 + \|\w\|_{2}^4 \right], \quad \text{	for all } \;  \w\in \L^{2} (\O )\cap\mathbb{W}_0^{1,4}(\O ).
		\end{align}
		\end{itemize}
		
	\end{lemma}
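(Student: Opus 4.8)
The statement to be proved collects five auxiliary inequalities; the first two (Gagliardo–Nirenberg) are quoted directly from the literature, so the task is really to derive the chain \eqref{Sobolev-embedding3}$\Rightarrow$\eqref{Korn-ineq}$\Rightarrow$\eqref{Sobolev+Korn}$\Rightarrow$\eqref{Gag-Korn-ineq} using standard tools. My plan is to treat each item in turn, citing the stated references for \eqref{Sobolev-embedding3} and \eqref{Korn-ineq} and then performing the two elementary combinations.

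\begin{proof}
The inequalities \eqref{Gen_lady-3} and \eqref{Gen_lady-4} are the Gagliardo–Nirenberg interpolation inequalities in the form stated in \cite[Theorem 1]{Nirenberg_1959}: writing $\w\in\L^2(\O)\cap\mathbb{W}_0^{1,4}(\O)$ and interpolating $\L^3$ (resp.\ $\L^4$) between $\L^2$ and the Sobolev space built on $\nabla\w\in\L^4$, one obtains the indicated exponents, which sum to $1$ as required by the scaling relation; boundedness of $\O$ is not needed here since $\w$ has zero trace. The Sobolev embedding \eqref{Sobolev-embedding3} of $\mathbb{W}_0^{1,4}(\O)$ into $\L^\infty(\O)$ in dimensions $d\in\{2,3\}$ (valid for $\w$ with zero boundary values on an arbitrary open set, by extension by zero to $\R^d$ and the classical Morrey embedding $\mathrm{W}^{1,4}(\R^d)\hookrightarrow\mathrm{C}^{0,1-d/4}$ since $4>d$) is recorded in \cite[Subsection 2.4]{Kesavan_1989}. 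The Korn-type inequality \eqref{Korn-ineq}, controlling $\|\nabla\w\|_4$ by $\|\Arm(\w)\|_4$ for $\w\in\mathbb{W}_0^{1,4}(\O)$, is the $\L^4$ first Korn inequality on a general domain with zero trace, for which we refer to \cite[Theorem 2 (ii)]{DiFratta+Solombrino_Arxiv} and \cite[Theorem 4, p.~90]{Kondratev+Oleinik_1988}; the point is that the zero boundary condition removes the rigid-motion kernel, so no geometric hypothesis on $\O$ is required.

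For \eqref{Sobolev+Korn}, simply chain the two previous bounds: for $\w\in\mathbb{W}_0^{1,4}(\O)$,
\begin{align*}
	\|\w\|_\infty \leq C_{S,d}\|\nabla\w\|_4 \leq C_{S,d}C_{K,d}\|\Arm(\w)\|_4,
\end{align*}
so \eqref{Sobolev+Korn} holds with $\mathrm{M}_d:=C_{S,d}C_{K,d}$.

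For \eqref{Gag-Korn-ineq}, start from \eqref{Gen_lady-4}: for $\w\in\L^2(\O)\cap\mathbb{W}_0^{1,4}(\O)$,
\begin{align*}
	\|\w\|_4^4 \leq C^4\|\w\|_2^{\frac{16}{4+d}}\|\nabla\w\|_4^{\frac{4d}{4+d}}.
\end{align*}
Apply \eqref{Korn-ineq} to replace $\|\nabla\w\|_4$ by $C_{K,d}\|\Arm(\w)\|_4$, then use Young's inequality $ab\leq \tfrac{p'}{p'}\cdot\tfrac{a^{p}}{p}\cdot\cdots$ — more precisely $xy\le \tfrac{x^{p}}{p}+\tfrac{y^{p'}}{p'}$ with $x=\|\w\|_2^{16/(4+d)}$, $y=(C C_{K,d})^{4}\|\Arm(\w)\|_4^{4d/(4+d)}$ and the conjugate exponents $p=\tfrac{4+d}{4}$, $p'=\tfrac{4+d}{d}$, so that $x^{p}=\|\w\|_2^{4}$ and $y^{p'}$ is a constant multiple of $\|\Arm(\w)\|_4^{4}$. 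Collecting the resulting constants into a single $\mathrm{N}_d>0$ yields \eqref{Gag-Korn-ineq}.
\end{proof}

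The only genuinely non-routine points are the identification of the correct references for \eqref{Sobolev-embedding3} and \eqref{Korn-ineq} on general (possibly unbounded) domains with zero trace; everything else is bookkeeping with interpolation exponents and Young's inequality, and I do not anticipate any real obstacle.
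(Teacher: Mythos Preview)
Your proposal is correct and follows exactly the route the paper indicates: the paper does not supply a formal proof but merely cites the references for \eqref{Gen_lady-3}--\eqref{Korn-ineq} and writes ``In view of \eqref{Sobolev-embedding3} and \eqref{Korn-ineq}'' for \eqref{Sobolev+Korn} and ``In view of \eqref{Gen_lady-4}, \eqref{Korn-ineq} and Young's inequality'' for \eqref{Gag-Korn-ineq}, which is precisely the chaining and the Young splitting with conjugate exponents $p=\tfrac{4+d}{4}$, $p'=\tfrac{4+d}{d}$ that you carry out. One cosmetic slip: in your Young step the constant attached to $y$ should carry the power $4d/(4+d)$ rather than $4$, but since everything is absorbed into $\mathrm{N}_d$ this does not affect the argument.
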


	Throughout the article,  we denote by $C$   generic constant, which may vary from line to line.

	\subsection{The Helmholtz projection}
	The following content is motivated by the studies \cite{Farwig+Kozono+Sohr_2005,Farwig+Kozono+Sohr_2007,Kunstmann_2010}, which address the Helmholtz projection in the setting of general unbounded domains. We define
	\begin{align}
		\widetilde{\L}^p(\O) : = \begin{cases}
			\L^p(\O)\cap \L^2(\O), & p\in[2,\infty)\\
			\L^p(\O)+ \L^2(\O), & p\in (1,2),
		\end{cases}
	\end{align}
	and 
	\begin{align}
		\widetilde{\L}^p_{\sigma}  : = \begin{cases}
			\L^p_{\sigma} \cap \L^2_{\sigma} , & p\in[2,\infty)\\
			\L^p_{\sigma} + \L^2_{\sigma}, & p\in (1,2).
		\end{cases}
	\end{align}
	
	Now, we recall that the Helmholtz projector $\mathcal{P}_{p}: \widetilde{\L}^p(\O) \to \widetilde{\L}^p_{\sigma}$, which is a linear bounded operator characterized by the following decomposition
	$\v=\mathcal{P}_p\v+\nabla \varphi, \;  \varphi \in \widetilde{\mathrm{G}}^p(\O)$ satisfying $(\mathcal{P}_p)^* = \mathcal{P}_{p'}$ with $\frac{1}{p}+\frac{1}{p'}=1$ (cf. \cite{Farwig+Kozono+Sohr_2007}), where
	\begin{align}
		\widetilde{\mathrm{G}}^p(\O) : = \begin{cases}
			\mathrm{G}^p(\O)\cap \mathrm{G}^2(\O), & p\in[2,\infty)\\
			\mathrm{G}^p(\O)+ \mathrm{G}^2(\O), & p\in (1,2),
		\end{cases}
	\end{align}
	and 
	\begin{align}
		\mathrm{G}^p(\O) = \{\nabla q \in \L^p(\O) \; : \; q\in \mathrm{L}^p_{loc} (\O)\}.
	\end{align}
	Let us fix the notations $\widetilde{\mathbb{W}}^{1,p}_{0,\sigma}  := \widetilde{\mathbb{W}}_0^{1,p}(\O) \cap \widetilde{\L}^{p}_{\sigma}$,  and $\widetilde{\mathbb{W}}^{-1,p}(\O):= (\widetilde{\mathbb{W}}_0^{1,p'}(\O))'$ and $\widetilde{\mathbb{W}}^{-1,p}_{\sigma} : = (\widetilde{\mathbb{W}}^{1,p'}_{0,\sigma})'$ with $\frac{1}{p}+\frac{1}{p'}=1$ for the dual spaces, where
	\begin{align}
		\widetilde{\mathbb{W}}^{1,p}_0(\O) : = \begin{cases}
			\mathbb{W}^{1,p}_0(\O)\cap \mathbb{W}^{1,2}_0(\O), & p\in[2,\infty)\\
			\mathbb{W}^{1,p}_0(\O)+ \mathbb{W}^{1,2}_0(\O), & p\in (1,2).
		\end{cases}
	\end{align}
	
	\begin{lemma}[{\cite[Proposition 3.1]{Kunstmann_2010}}]
		The Helmholtz projection $\mathcal{P}_{p}$ has a continuous linear extension $\widetilde{\mathcal{P}}_{p}: \widetilde{\mathbb{W}}^{-1,p}(\O) \to \widetilde{\mathbb{W}}^{-1,p}_{\sigma}$ given by the restriction
		\begin{align}
			\widetilde{\mathcal{P}}_{p} \Psi  = \Psi |_{\widetilde{\mathbb{W}}^{1,p'}_{0,\sigma}}, \;\; \text{ for } \; \; \Psi \in \widetilde{\mathbb{W}}^{-1,p}_{\sigma} \;\; \text{ with } \; \;\frac{1}{p}+\frac{1}{p'}=1.
		\end{align}
	\end{lemma}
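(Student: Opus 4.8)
The statement reproduced here is Kunstmann's extension result, and the natural route is a pure duality argument that uses nothing about the specific PDE at hand. The plan is to define the candidate extension directly by $\widetilde{\mathcal{P}}_{p}\Psi := \Psi|_{\widetilde{\mathbb{W}}^{1,p'}_{0,\sigma}}$, i.e.\ simply restrict the functional $\Psi\in\widetilde{\mathbb{W}}^{-1,p}(\O)=(\widetilde{\mathbb{W}}^{1,p'}_{0}(\O))'$ to the closed subspace of solenoidal test fields, check that this restriction map is linear and bounded, and then verify that on the dense subspace $\widetilde{\L}^{p}(\O)\hookrightarrow\widetilde{\mathbb{W}}^{-1,p}(\O)$ it coincides with the original Helmholtz projector $\mathcal{P}_{p}$; by uniqueness of continuous extensions this identifies $\widetilde{\mathcal{P}}_{p}$ as \emph{the} extension.

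First I would record the embeddings $\widetilde{\L}^{p}(\O)\hookrightarrow\widetilde{\mathbb{W}}^{-1,p}(\O)$ and $\widetilde{\L}^{p}_{\sigma}\hookrightarrow\widetilde{\mathbb{W}}^{-1,p}_{\sigma}$, obtained by dualizing $\widetilde{\mathbb{W}}^{1,p'}_{0}(\O)\hookrightarrow\widetilde{\L}^{p'}(\O)$ and $\widetilde{\mathbb{W}}^{1,p'}_{0,\sigma}\hookrightarrow\widetilde{\L}^{p'}_{\sigma}$, together with the density of $\widetilde{\L}^{p}(\O)$ in $\widetilde{\mathbb{W}}^{-1,p}(\O)$ (since $(\mathrm{C}^{\infty}_{c}(\O))^{d}$ is dense in the reflexive space $\widetilde{\mathbb{W}}^{-1,p}(\O)$ for $1<p<\infty$). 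Since $\widetilde{\mathbb{W}}^{1,p'}_{0,\sigma}$ carries the subspace norm inherited from $\widetilde{\mathbb{W}}^{1,p'}_{0}(\O)$, the restriction map $\Psi\mapsto\Psi|_{\widetilde{\mathbb{W}}^{1,p'}_{0,\sigma}}$ is linear with operator norm at most $1$, hence $\widetilde{\mathcal{P}}_{p}\colon\widetilde{\mathbb{W}}^{-1,p}(\O)\to\widetilde{\mathbb{W}}^{-1,p}_{\sigma}$ is continuous. Finally, for $\mathbf{v}\in\widetilde{\L}^{p}(\O)$ and any solenoidal $\phi\in\widetilde{\mathbb{W}}^{1,p'}_{0,\sigma}\hookrightarrow\widetilde{\L}^{p'}_{\sigma}$, I would use the adjoint relation $(\mathcal{P}_{p})^{*}=\mathcal{P}_{p'}$ together with $\mathcal{P}_{p'}\phi=\phi$ (the projector is the identity on solenoidal fields) to obtain
$$\langle\mathcal{P}_{p}\mathbf{v},\phi\rangle=(\mathbf{v},\mathcal{P}_{p'}\phi)=(\mathbf{v},\phi)=\langle\mathbf{v},\phi\rangle=\langle\widetilde{\mathcal{P}}_{p}\mathbf{v},\phi\rangle,$$
so that $\widetilde{\mathcal{P}}_{p}$ and $\mathcal{P}_{p}$ agree on $\widetilde{\L}^{p}(\O)$; by density and continuity, $\widetilde{\mathcal{P}}_{p}$ is the desired (and unique) continuous linear extension.

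The calculus here is negligible; the only genuine care is bookkeeping. One must keep track of the hybrid ``intersection for $p\ge 2$ / sum for $p\in(1,2)$'' definitions of all the tilded spaces, verify that the pairing $\langle\cdot,\cdot\rangle$ identifying $\widetilde{\mathbb{W}}^{-1,p}(\O)$ with $(\widetilde{\mathbb{W}}^{1,p'}_{0}(\O))'$ genuinely extends the $\L^{2}$ (equivalently the $\widetilde{\L}^{p}$--$\widetilde{\L}^{p'}$) pairing so that the identity displayed above is legitimate, and confirm the density of $\widetilde{\L}^{p}(\O)$ in $\widetilde{\mathbb{W}}^{-1,p}(\O)$ in the sum-space regime $p\in(1,2)$ --- which is exactly the regime where general unbounded domains lacking the Poincar\'e inequality force the sum-space construction. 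These are precisely the points established in \cite{Farwig+Kozono+Sohr_2007,Kunstmann_2010}, so the argument sketched above reproduces that proof rather than providing a new one.
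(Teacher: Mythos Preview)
Your proof sketch is correct and is precisely the standard duality argument; note, however, that the paper does not give its own proof of this lemma --- it is quoted directly from \cite[Proposition 3.1]{Kunstmann_2010} and stated without proof. There is therefore nothing to compare against, and as you yourself observe in your final paragraph, your argument reproduces Kunstmann's reasoning rather than offering an alternative.
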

	
	\begin{remark}
		We will denote $\widetilde{\mathcal{P}}_{\frac43}$ by $\mathcal{P}$ in the sequel.
	\end{remark}

	\subsection{Linear and nonlinear operators}
	Let us define linear operator $\A:\X\to \X'$ by $\A\v:= - \mathcal{P}\Delta\v$ such that
	\begin{equation*}
		\langle \A\v, \u\rangle = (\nabla\v,\nabla\u).
	\end{equation*}
	Remember that the operator $\A$ is a non-negative, self-adjoint operator in $\H$  and \begin{align*}
		\left<\A\v,\v\right>=\|\v\|_{\V}^2,\ \textrm{ for all }\ \v\in\X, \ \text{ so that }\ \|\A\v\|_{\X^{\prime}}\leq \|\v\|_{\X}.
	\end{align*}

	\medskip
	
	Next, we define the {trilinear form} $b(\cdot,\cdot,\cdot):\X\times\X\times\X\to\R$ by $$b(\u,\v,\w)=\int_{\O}(\u(x)\cdot\nabla)\v(x)\cdot\w(x)\d x= \sum_{i,j=1}^d\int_{\O}\u_i(x)\frac{\partial \v_j(x)}{\partial x_i}\w_j(x)\d x.$$ We also define an operator $\B: \X\times\X \to \X'$  by $\B(\u, \v):= \mathcal{P}(\u\cdot\nabla)\v$ such that
	\begin{align*}
		\left\langle   \B(\u, \v), \w  \right \rangle = b(\u,\v,\w).
	\end{align*}
	Using an integration by parts, it is immediate that 
	\begin{equation}\label{b0}
		\left\{
		\begin{aligned}
			b(\u,\v,\v) &= 0, && \text{ for all }\ \u,\v \in\X,\\
			b(\u,\v,\w) &=  -b(\u,\w,\v), && \text{ for all }\ \u,\v,\w\in \X.
		\end{aligned}
		\right.\end{equation}

	\medskip
	
	Let us now define an operator $\J: \X \to \X'$  by $\J(\v):=-\mathcal{P}\diver(\Arm(\v)\Arm(\v))$ such that 
	\begin{align*}
		\langle \J(\v),\u \rangle = \frac{1}{2}\int_{\O} \Arm(\v(x))\Arm(\v(x)):\Arm(\u(x))\d x.
	\end{align*}
	Note that for $\v\in\X$, we have 
	\begin{align*}
		\|\J(\v)\|_{\X^{\prime}} \leq C \|\v\|^2_{\X}.
	\end{align*}

	\medskip

	Finally, we define an operator $\K: \X \to \X'$  by $\K(\v):=-\mathcal{P}\diver(|\Arm(\v)|^{2}\Arm(\v))$ such that 
	\begin{align*}
		\langle \K(\v),\u \rangle = \frac{1}{2}\int_{\O} |\Arm(\v(x))|^2\Arm(\v(x)):\Arm(\u(x)) \d x.
	\end{align*} It is immediate that $$\langle\mathcal{K}(\v),\v\rangle =\frac12\|\Arm(\v)\|_{4}^{4}.$$ Note that for $\v\in\X$, we have 
	\begin{align*}
		\|{\K}(\v)\|_{\X^{\prime}} \leq C \|\v\|^3_{\X},
	\end{align*}

	\begin{remark}
	1. From \cite[Equation (2.20)]{Hamza+Paicu_2007}, we have 
		\begin{align}\label{J1-J2}
			& |\alpha\left<	\J(\v_1) - \J(\v_2), \v_1 - \v_2 \right>| 
			  \leq  \frac{|\alpha|}{2}\int_{\O} |\Arm(\v_1-\v_2)|^2( |\Arm(\v_1)|+|\Arm(\v_2)|) \d x,
		\end{align}
	
	2. From \cite[Equation (2.13)]{Hamza+Paicu_2007}, we have 
	\begin{align}\label{K1-K2}
		&  \beta\left<	\K(\v_1) - \K(\v_2), \v_1 - \v_2 \right>
		\nonumber\\ & = \frac{\beta}{2}\int_{\O}( |\Arm(\v_1)|^2-|\Arm(\v_2)|^2)^2 \d x + \frac{\beta}{2}\int_{\O} |\Arm(\v_1-\v_2)|^2( |\Arm(\v_1)|^2+|\Arm(\v_2)|^2) \d x.
	\end{align}
	\end{remark}

	\subsection{Abstract formulation}	Taking the projection $\mathcal{P}$ on  \eqref{equationV1}, we obtain for $t\geq \mathfrak{r},$ $\mathfrak{r}\in\mathbb{R}$ 
	\begin{equation}\label{STGF}
		\left\{
		\begin{aligned}
			\d\u&=-\left[\nu \A\u+\B(\u)+ \alpha \J(\u) +\beta \K(\u)-\mathcal{P}\f\right]\d t +\sigma\u\d \W, && \text{in } \O \times(\mathfrak{r},\infty), \\ 
			\u|_{t=\mathfrak{r}}&=\u_{\mathfrak{r}}, &&  x\in \O,
		\end{aligned}
		\right.
	\end{equation}
	where  $\sigma>0$ and the stochastic integral is understood in the It\^o sense. Here, $\W(t,\omega)$ is the standard scalar Wiener process on the filtered probability space $(\Omega, \mathscr{F}, (\mathscr{F}_{t})_{t\in\R}, \mathbb{P})$, where $$\Omega=\{\omega\in C(\R;\R):\omega(0)=0\},$$ endowed with the compact-open topology given by the complete metric
	\begin{align*}
		d_{\Omega}(\omega,\omega'):=\sum_{m=1}^{\infty} \frac{1}{2^m}\frac{\|\omega-\omega'\|_{m}}{1+\|\omega-\omega'\|_{m}},\text{ where } \|\omega-\omega'\|_{m}:=\sup_{-m\leq t\leq m} |\omega(t)-\omega'(t)|,
	\end{align*}
	and $\mathscr{F}$ is the Borel sigma-algebra induced by the compact-open topology of $(\Omega,d_{\Omega}),$ $\mathbb{P}$ is the two-sided Wiener measure on $(\Omega,\mathscr{F})$. Also, define $\{\vartheta_{t}\}_{t\in\R}$ by 
	\begin{equation*}
		\vartheta_{t}\omega(\cdot)=\omega(\cdot+t) -\omega(t), \ \ \   t\in\R\ \text{ and }\ \omega\in\Omega.
	\end{equation*}
	Hence, $(\Omega,\mathscr{F},\mathbb{P},\{\vartheta_{t}\}_{t\in\R})$ is a metric dynamical system. We expect a solution to the system \eqref{STGF} in the following sense:
	\begin{definition}\label{GWS}
		A stochastic process $\u(t):=\u(t;\mathfrak{r},\omega,\u_{\mathfrak{r}})$ is a $\textsl{global (analytic) weak solution}$ of the system \eqref{STGF} for $t\geq\mathfrak{r}$ and $\omega\in\Omega$ if for $\mathbb{P}$-a.s. $\omega\in\Omega$
		\begin{align*}
			\u(\cdot)\in\mathrm{C}([\mathfrak{r},\infty);\H)\cap\mathrm{L}^{2}_{\mathrm{loc}}(\mathfrak{r},\infty;\V)\cap \mathrm{L}^{4}_{\mathrm{loc}}(\mathfrak{r},\infty;\mathbb{W}^{1,4}(\O))
		\end{align*}
		and 
		\begin{align*}
			&(\u(t),\boldsymbol{\psi})+\nu\int_{\mathfrak{r}}^{t}(\nabla\u(\xi),\nabla\boldsymbol{\psi})\d\xi+\int_{\mathfrak{r}}^{t}b(\u(\xi),\u(\xi),\boldsymbol{\psi})\d\xi\nonumber\\&=(\u_{\mathfrak{r}},\boldsymbol{\psi}) - \alpha \int_{\mathfrak{r}}^{t}\langle\J(\u(\xi)), \boldsymbol{\psi}\rangle \d\xi - \beta \int_{\mathfrak{r}}^{t}\langle\K(\u(\xi)) , \boldsymbol{\psi}\rangle \d\xi +\int_{\mathfrak{r}}^{t}(\f(\xi),\boldsymbol{\psi})\d\xi +\sigma\int_{\mathfrak{r}}^{t}(\u(\xi),\boldsymbol{\psi})\d\W(\xi),
		\end{align*}
		for every $t\geq\mathfrak{r}$ and for every $\boldsymbol{\psi}\in\X$.
	\end{definition}
	
\subsection{Ornstein-Uhlenbeck process}	Let us now consider
	\begin{align}\label{OU1}
		y(\vartheta_{t}\omega) =  \int_{-\infty}^{t} e^{-(t-\xi)}\d \W(\xi), \ \ \omega\in \Omega,
	\end{align} which is the \textsl{stationary solution} of the one-dimensional \textsl{Ornstein-Uhlenbeck equation}
	\begin{align}\label{OU2}
		\d y(\vartheta_t\omega) +  y(\vartheta_t\omega)\d t =\d\W(t).
	\end{align}
	It is known from \cite{FAN} that there exists a $\vartheta$-invariant subset $\widetilde{\Omega}\subset\Omega$ of full measure such that $y(\vartheta_t\omega)$ is continuous in $t$ for every $\omega\in \widetilde{\Omega},$ and
	\begin{align}
		\lim_{t\to\infty}\frac{|\omega(t)|}{t}=	\lim_{t\to \pm \infty} \frac{|y(\vartheta_t\omega)|}{|t|}&=	\lim_{t\to \pm \infty} \frac{1}{t} \int_{0}^{t} y(\vartheta_{\xi}\omega)\d\xi =\lim_{t\to \infty} e^{-\delta t}|y(\vartheta_{-t}\omega)| =0,\label{Z3}
	\end{align}
	for all $\delta>0$. For further analysis of this work, we do not distinguish between $\widetilde{\Omega}$ and $\Omega$.
	Since, $\omega(\cdot)$ has sub-exponential growth  (cf. \cite[Lemma 11]{CGSV}), $\Omega$ can be written as $\Omega=\bigcup\limits_{N\in\N}\Omega_{N}$, where
	\begin{align*}
		\Omega_{N}:=\{\omega\in\Omega:|\omega(t)|\leq Ne^{|t|},\ \text{ for all }\ t\in\R\}, \ \text{ for every } \ N\in\N.
	\end{align*}
	Moreover, for each $N\in\N$, $(\Omega_{N},d_{\Omega_{N}})$ is a polish space (cf. \cite[Lemma 17]{CGSV}).
	\begin{lemma}\label{conv_z}
		For each $N\in\N$, suppose $\omega_\mathfrak{k},\omega_0\in\Omega_{N}$ are such that $d_{\Omega}(\omega_\mathfrak{k},\omega_0)\to0$ as $\mathfrak{k}\to+\infty$. Then, for each $\mathfrak{r}\in\R$ and $T\in\R^+$ ,
		\begin{align}
			\sup_{t\in[\mathfrak{r},\mathfrak{r}+T]}&\bigg[|y(\vartheta_{t}\omega_\mathfrak{k})-y(\vartheta_{t}\omega_0)|+|e^{ y(\vartheta_{t}\omega_\mathfrak{k})}-e^{ y(\vartheta_{t}\omega_0)}|\bigg]\to 0 \ \text{ as } \ \mathfrak{k}\to+\infty,\nonumber\\
			\sup_{\mathfrak{k}\in\N}\sup_{t\in[\mathfrak{r},\mathfrak{r}+T]}&|y(\vartheta_{t}\omega_\mathfrak{k})|\leq C(\mathfrak{r},T,\omega_0).\label{conv_z2}
		\end{align}
	\end{lemma}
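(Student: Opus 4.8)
\textbf{Proof proposal for Lemma~\ref{conv_z}.}
The plan is to treat the two assertions separately, both hinging on the explicit integral representation \eqref{OU1} of the Ornstein--Uhlenbeck process together with the sub-exponential growth control available on each $\Omega_N$. First I would write, for $\omega\in\Omega_N$ and any $t\in\R$, an estimate of the form $|y(\vartheta_t\omega)|\leq C(N)(1+|t|)e^{|t|}$ (or a similar explicit bound), obtained by integrating by parts in \eqref{OU1} to move the $\d\W$ onto the exponential kernel, thereby expressing $y(\vartheta_t\omega)$ purely in terms of the path $\omega(\cdot)$ and a convergent integral $\int_{-\infty}^{t}e^{-(t-\xi)}\omega(\xi)\d\xi$ against the definition $\vartheta_t\omega(\cdot)=\omega(\cdot+t)-\omega(t)$. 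Specifically, using $\vartheta_t\omega$ one gets a representation like $y(\vartheta_t\omega)=\omega(t)-\int_{-\infty}^{t}e^{-(t-\xi)}\omega(\xi)\d\xi$ after the integration by parts, which is a \emph{continuous functional of $\omega$ in the compact-open topology} on bounded $\xi$-intervals, with the tail $\xi\to-\infty$ controlled uniformly over $\Omega_N$ by the defining bound $|\omega(\xi)|\leq Ne^{|\xi|}$ and the exponential weight $e^{-(t-\xi)}$. This simultaneously yields the uniform bound \eqref{conv_z2}: taking the supremum over $t\in[\mathfrak r,\mathfrak r+T]$ and over $\mathfrak k\in\N$ (all $\omega_\mathfrak k\in\Omega_N$), the right-hand side depends only on $N$ — equivalently on $\mathfrak r,T,\omega_0$ since $\omega_\mathfrak k\to\omega_0$ forces the $\omega_\mathfrak k$ to lie in a fixed $\Omega_N$ — which is exactly $C(\mathfrak r,T,\omega_0)$.

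For the convergence statement, I would fix $\mathfrak r\in\R$, $T>0$, and split the integral defining $y(\vartheta_t\omega_\mathfrak k)-y(\vartheta_t\omega_0)$ into a ``near'' part over $\xi\in[-M,\mathfrak r+T]$ and a ``far'' part over $\xi<-M$. On the near part, $d_\Omega(\omega_\mathfrak k,\omega_0)\to0$ implies $\omega_\mathfrak k\to\omega_0$ uniformly on $[-M,M]$ for every fixed $M$ (this is precisely how the metric $d_\Omega$ encodes compact-open convergence), so by dominated convergence the near part tends to $0$ uniformly in $t\in[\mathfrak r,\mathfrak r+T]$. On the far part, the bound $|\omega_\mathfrak k(\xi)|,|\omega_0(\xi)|\leq Ne^{|\xi|}$ together with the kernel $e^{-(t-\xi)}$ shows the contribution is bounded by $C(N)e^{-\kappa M}$ for some $\kappa>0$, uniformly in $t\in[\mathfrak r,\mathfrak r+T]$ and in $\mathfrak k$; choosing $M$ large makes this as small as we wish. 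This proves $\sup_{t\in[\mathfrak r,\mathfrak r+T]}|y(\vartheta_t\omega_\mathfrak k)-y(\vartheta_t\omega_0)|\to0$. The term with $e^{y(\vartheta_t\omega_\mathfrak k)}-e^{y(\vartheta_t\omega_0)}$ then follows immediately: on the compact interval the arguments $y(\vartheta_t\omega_\mathfrak k)$ are uniformly bounded by \eqref{conv_z2}, and $x\mapsto e^x$ is Lipschitz on bounded sets, so $|e^{y(\vartheta_t\omega_\mathfrak k)}-e^{y(\vartheta_t\omega_0)}|\leq C\,|y(\vartheta_t\omega_\mathfrak k)-y(\vartheta_t\omega_0)|$ and the sup over $t$ goes to $0$.

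The main obstacle I anticipate is organising the tail estimate cleanly: one must verify that the integration-by-parts representation of $y(\vartheta_t\omega)$ in terms of $\omega$ is valid for \emph{all} $\omega\in\Omega_N$ (not just $\mathbb P$-a.e.\ $\omega$), and that the resulting integral $\int_{-\infty}^{t}e^{-(t-\xi)}\omega(\xi)\d\xi$ genuinely converges and is continuous in $\omega$ under compact-open convergence, uniformly for $t$ in a compact set — this is where the exponential weight beating the sub-exponential growth $Ne^{|\xi|}$ is used, and where a little care with the constant $\kappa$ (it must be strictly positive, which holds because the kernel decay rate $1$ strictly exceeds the growth rate $1$ only after absorbing the polynomial factor — so one actually picks up $e^{-(1-\epsilon)M}$-type decay after a harmless shift) is needed. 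Everything else is routine dominated-convergence bookkeeping.
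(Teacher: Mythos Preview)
The paper does not prove this lemma itself; it simply cites \cite[Corollary~22]{CLL} and \cite[Lemma~2.5]{YR}. So the only question is whether your argument stands on its own.

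There is a genuine gap, precisely where you flag it. Your far-tail estimate needs the kernel $e^{-(t-\xi)}$ (decay rate~$1$ as $\xi\to-\infty$) to beat the growth permitted in $\Omega_N$, namely $|\omega(\xi)|\le N e^{|\xi|}=N e^{-\xi}$ for $\xi<0$ (growth rate~$1$). These rates are \emph{equal}: the product obeys $e^{-(t-\xi)}|\omega(\xi)|\le N e^{-t}$, constant in $\xi$, so $\int_{-\infty}^{-M}e^{-(t-\xi)}|\omega(\xi)|\,\d\xi$ diverges and there is no $\kappa>0$. Your suggested fix (``absorb the polynomial factor, get $e^{-(1-\epsilon)M}$'') does not apply --- there is no polynomial factor in the definition of $\Omega_N$. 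In fact the integration-by-parts representation $y(\vartheta_t\omega)=\omega(t)-\int_{-\infty}^t e^{-(t-\xi)}\omega(\xi)\,\d\xi$ is not even well-defined for a generic element of $\Omega_N$; it holds only for $\omega\in\widetilde\Omega$ thanks to the much stronger sublinear growth $|\omega(t)|/|t|\to0$ recorded in \eqref{Z3}, and that bound is \emph{not} uniform over $\Omega_N$. Consequently neither the uniform bound \eqref{conv_z2} nor the convergence assertion follows from your route using only $\Omega_N$ membership.

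A correct argument must exploit more structure than the raw $\Omega_N$ bound --- for instance the semigroup identity $y(\vartheta_t\omega)=e^{-(t-s)}y(\vartheta_s\omega)+\int_s^t e^{-(t-\xi)}\,\d\omega(\xi)$ on a \emph{finite} interval $[s,t]\subset[\mathfrak r,\mathfrak r+T]$, where the integral term depends only on $\omega|_{[s,t]}$ (so compact-open convergence and the $\Omega_N$ bound on compacts suffice), together with a separate treatment of $y(\vartheta_s\omega_{\mathfrak k})$ that uses the properties of $\widetilde\Omega$; this is the line taken in the cited references.
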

	\begin{proof}
		See the proofs of \cite[Corollary 22]{CLL} and \cite[Lemma 2.5]{YR}.
	\end{proof}

\subsection{Non-autonomous random dynamical system}\label{NRDS}
	Now, define $\z(t,\omega):=e^{-\sigma y(\vartheta_{t}\omega)}$ and a new function  $\v$  by 
	\begin{align}\label{COV}
		\v(t;\mathfrak{r},\omega,\v_{\mathfrak{r}})=\z(t,\omega)\u(t;\mathfrak{r},\omega,\u_{\mathfrak{r}}) \ \ \
		\text{ with }
		\ \ \	\v_{\mathfrak{r}}=\z(\mathfrak{r},\omega)\u_{\mathfrak{r}},
	\end{align}
	where $\u(t;\mathfrak{r},\omega,\u_{\mathfrak{r}})$ and $y(\vartheta_{t}\omega)$ are the solutions of \eqref{STGF} (in the sense of Definition \ref{GWS}) and \eqref{OU2}, respectively. Then $\v(\cdot;\mathfrak{r},\omega,\v_{\mathfrak{r}})$ satisfies the following system:
	\begin{equation}\label{eqn-CTGF-Pressure}
		\left\{		\begin{aligned}
			\dfrac{\partial	\v}{\partial t} - \nu \Delta \v +\left[\frac{\sigma^2}{2}-\sigma y(\vartheta_{t}\omega)\right]\v + \z^{-1}(t,\omega)(\v\cdot \nabla)\v & - \alpha \z^{-1}(t,\omega) \text{div}((\Arm(\v))^2) 
			\\  -  \beta \z^{-2}(t,\omega)  \text{div}(|\Arm(\v)|^2\Arm(\v)) &= \z(t,\omega) \f- \z(t,\omega) \nabla \textbf{P}, &&\text{in }  \O  \times (\mathfrak{r},\infty),\\
			\text{div}\; \v&=0,  &&   \text{in } \O  \times [\mathfrak{r},\infty),\\
			\v &= \boldsymbol{0}, &&   \text{on } \partial\O \times [\mathfrak{r},\infty),\\
			\v(x,\mathfrak{r})&=\v_\mathfrak{r}(x), \quad &&    \text{in } \O ,
		\end{aligned}\right.
	\end{equation}
and the following projected form:
	\begin{equation}\label{CTGF}
		\left\{
		\begin{aligned}
			\frac{\d\v}{\d t}+\nu \A\v +\left[\frac{\sigma^2}{2}-\sigma y(\vartheta_{t}\omega)\right]\v+\z^{-1}(t,\omega)\B\big(\v\big) & + \alpha \z^{-1}(t,\omega)\J\big(\v\big) 
			\\  +  \beta \z^{-2}(t,\omega) \K\big(\v\big) & = \z(t,\omega)\mathcal{P}\f , && t> \mathfrak{r}, \\ 
			\v|_{t=\mathfrak{r}}&=\v_{\mathfrak{r}}, && x\in \O,
		\end{aligned}
		\right.
	\end{equation}
	in $\X'$.  The solution of the system \eqref{CTGF} is understood in the following sense.
	
	\begin{definition}\label{def-CTGF}
	Let $\mathfrak{r}\in \R$, $\omega\in\Omega$ and $\v_\mathfrak{r}\in\H$.	A function $\v(\cdot):=\v(\cdot;\mathfrak{r},\omega,\v_{\mathfrak{r}})$ is called a \textit{weak solution} to system \eqref{CTGF} on time interval $[\mathfrak{r}, \infty)$, if $$\v \in  \mathrm{C}([\mathfrak{r},\infty); \H) \cap \mathrm{L}^{2}_{\mathrm{loc}}(\mathfrak{r},\infty; \V)\cap\mathrm{L}^{4}_{\mathrm{loc}}(\mathfrak{r},\infty; \mathbb{W}^{1,4}_0(\O)), \;\;\; \frac{\d\v}{\d t}\in\mathrm{L}^{2}_{\mathrm{loc}}(\mathfrak{r},\infty;\V^{\prime})+\mathrm{L}^{\frac43}_{\mathrm{loc}}(\mathfrak{r},\infty;\mathbb{W}^{-1,\frac43}(\O)),$$ and it satisfies 
		\begin{itemize}
			\item [(i)] for any $\boldsymbol{\psi}\in \X,$ 
			\begin{align}\label{W-TGF}
				\left<\frac{\d\v(t)}{\d t}, \boldsymbol{\psi}\right>&=  - \bigg\langle \nu \A\v(t)+\left[\frac{\sigma^2}{2}-\sigma y(\vartheta_{t}\omega)\right]\v(t)+\z^{-1}(t,\omega)\B\big(\v(t)\big)  + \alpha \z^{-1}(t,\omega) \J\big(\v(t)\big) 
				\nonumber \\ & \qquad + \beta \z^{-2}(t,\omega)\K\big(\v(t)\big) -\z(t,\omega) \f(t) , \boldsymbol{\psi} \bigg\rangle,
			\end{align}
			for a.e. $t\in[\mathfrak{r},\infty);$
			\item [(ii)] the initial data:
			$$\v(\mathfrak{r})=\v_\mathfrak{r} \ \text{ in }\ \H.$$
		\end{itemize}
	\end{definition}
The following result discuss  the existence and uniqueness of weak solution of system \eqref{CTGF} in the sense of Definition \ref{def-CTGF}.
	\begin{lemma}\label{lem-Sol-CTGF}
		Let $\mathfrak{r}\in\R$, $\omega\in\Omega$ and   \eqref{third-grade-paremeters-res} hold. For $\v_\mathfrak{r} \in \H$ and $\f\in \mathrm{L}^{2}_{\mathrm{loc}}(\R;\H^{-1}(\O))$, there exists a unique weak solution $\v(\cdot):=\v(\cdot;\mathfrak{r},\omega,\v_{\mathfrak{r}})$ to system \eqref{CTGF} in the sense of Definition \ref{def-CTGF}.
	\end{lemma}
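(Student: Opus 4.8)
The plan is to build the solution by a Faedo--Galerkin scheme together with the monotonicity (Minty--Browder) method, adapting the deterministic arguments of \cite{Hamza+Paicu_2007} and \cite{KK+FC1} to general domains and to the time-dependent random coefficients. Since $\omega\in\Omega$ is fixed and $t\mapsto y(\vartheta_t\omega)$ is continuous, the functions $\z^{\pm1}(t,\omega)$, $\z^{\pm2}(t,\omega)$ and $\sigma y(\vartheta_t\omega)$ are continuous in $t$ and, on any window $[\mathfrak{r},\mathfrak{r}+T]$, bounded between two positive constants $C_\omega(T)^{\pm1}$; hence \eqref{CTGF} is pathwise a deterministic evolution equation with smooth-in-time coefficients. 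I would take a Galerkin basis $\{\boldsymbol{e}_k\}_{k\in\N}\subset\X$ with dense finite spans (constructed on general domains as in \cite{KK+FC1}), let $\v_n$ solve the corresponding projection of \eqref{W-TGF}, and derive uniform bounds. Testing with $\v_n$ and using $b(\v_n,\v_n,\v_n)=0$, $\langle\A\v_n,\v_n\rangle=\|\nabla\v_n\|_2^2$ and $\langle\K(\v_n),\v_n\rangle=\tfrac12\|\Arm(\v_n)\|_4^4$ gives
\[
\tfrac12\tfrac{\d}{\d t}\|\v_n\|_2^2+\nu\|\nabla\v_n\|_2^2+\tfrac{\beta}{2}\z^{-2}\|\Arm(\v_n)\|_4^4+\Big[\tfrac{\sigma^2}{2}-\sigma y(\vartheta_t\omega)\Big]\|\v_n\|_2^2=-\alpha\z^{-1}\langle\J(\v_n),\v_n\rangle+\z\,\langle\f,\v_n\rangle ,
\]
in which the cubic term, bounded by $|\langle\J(\v_n),\v_n\rangle|\le\tfrac12\|\Arm(\v_n)\|_3^3$, is absorbed --- using precisely the restriction $|\alpha|<\sqrt{2\nu\beta}$, Young's inequality, and \eqref{Korn-ineq}--\eqref{Gag-Korn-ineq} --- into $\nu\|\nabla\v_n\|_2^2+\tfrac\beta2\z^{-2}\|\Arm(\v_n)\|_4^4$ plus a remainder controlled by $C_\omega(T)\|\v_n\|_2^2$, while the forcing is estimated by $\tfrac\nu2\|\nabla\v_n\|_2^2+C_\omega(T)\|\f\|_{\H^{-1}}^2$. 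Gr\"onwall's inequality then bounds $\{\v_n\}$, uniformly in $n$, in $\mathrm{L}^\infty(\mathfrak{r},\mathfrak{r}+T;\H)\cap\mathrm{L}^2(\mathfrak{r},\mathfrak{r}+T;\V)\cap\mathrm{L}^4(\mathfrak{r},\mathfrak{r}+T;\mathbb{W}^{1,4}_0(\O))$; comparison in the equation together with the operator bounds $\|\B(\v_n)\|_{\X'}\le C\|\v_n\|_\X^2$, $\|\J(\v_n)\|_{\X'}\le C\|\v_n\|_\X^2$, $\|\K(\v_n)\|_{\X'}\le C\|\v_n\|_\X^3$ from Section~\ref{Sec2} bounds $\{\tfrac{\d\v_n}{\d t}\}$ in $\mathrm{L}^2(\mathfrak{r},\mathfrak{r}+T;\V')+\mathrm{L}^{\frac43}(\mathfrak{r},\mathfrak{r}+T;\mathbb{W}^{-1,\frac43}(\O))$.

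Next I would pass to the limit along a subsequence: $\v_n\rightharpoonup\v$ in $\mathrm{L}^2(\mathfrak{r},\mathfrak{r}+T;\V)$ and in $\mathrm{L}^4(\mathfrak{r},\mathfrak{r}+T;\mathbb{W}^{1,4}_0(\O))$, weak-$*$ in $\mathrm{L}^\infty(\mathfrak{r},\mathfrak{r}+T;\H)$, and strongly in $\mathrm{L}^2(\mathfrak{r},\mathfrak{r}+T;\mathrm{L}^2_{\mathrm{loc}})$ by the Aubin--Lions--Simon lemma on bounded subdomains, which lets one pass to the limit in $\B(\v_n)$ distributionally. The hard part is that $\J(\v_n)$ and $\K(\v_n)$ are only bounded (in $\mathrm{L}^2$ and $\mathrm{L}^{\frac43}$ of dual spaces, respectively) and are \emph{not} weakly continuous, so their limits must be identified by monotonicity. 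The key observation is that $\v\mapsto\nu\A\v+\alpha\z^{-1}\J(\v)+\beta\z^{-2}\K(\v)+\lambda\v$ is monotone for a suitable $\lambda=\lambda(\omega,T)>0$: by \eqref{J1-J2}--\eqref{K1-K2}, the sign-indefinite $\J$-difference $\tfrac{|\alpha|}{2}\z^{-1}\!\int_{\O}|\Arm(\v_1-\v_2)|^2(|\Arm(\v_1)|+|\Arm(\v_2)|)\d x$ is, by Young's inequality and exactly the constraint $|\alpha|<\sqrt{2\nu\beta}$, dominated by the nonnegative $\K$-difference $\tfrac{\beta}{2}\z^{-2}\!\int_{\O}|\Arm(\v_1-\v_2)|^2(|\Arm(\v_1)|^2+|\Arm(\v_2)|^2)\d x$ plus a multiple of $\|\v_1-\v_2\|_\V^2$ that is absorbed by $\nu\A$. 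A Minty--Browder argument --- taking the Galerkin energy identity, integrating, passing to the $\liminf$, comparing with the limiting equation tested against $\v-\theta\boldsymbol{\phi}$ for $\boldsymbol{\phi}\in\mathrm{L}^4(\mathfrak{r},\mathfrak{r}+T;\X)$, sending $\theta\downarrow0$ and using hemicontinuity --- then identifies the weak limits of $\J(\v_n)$, $\K(\v_n)$ with $\J(\v)$, $\K(\v)$, so $\v$ satisfies \eqref{W-TGF}. The properties $\v\in\mathrm{C}([\mathfrak{r},\infty);\H)$ and $\v(\mathfrak{r})=\v_\mathfrak{r}$ follow from $\v\in\mathrm{L}^2(\mathfrak{r},\mathfrak{r}+T;\V)$, $\tfrac{\d\v}{\d t}\in\mathrm{L}^2(\mathfrak{r},\mathfrak{r}+T;\V')+\mathrm{L}^{\frac43}(\mathfrak{r},\mathfrak{r}+T;\mathbb{W}^{-1,\frac43})$ and the Lions--Magenes lemma; since $T$ is arbitrary the solution is global.

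For uniqueness, I would take two weak solutions $\v_1,\v_2$ with the same data, set $\w:=\v_1-\v_2$, and use \eqref{b0} to get $b(\v_1,\v_1,\w)-b(\v_2,\v_2,\w)=b(\w,\v_2,\w)=-b(\w,\w,\v_2)$, so $|b(\w,\v_2,\w)|\le\|\v_2\|_\infty\|\w\|_2\|\nabla\w\|_2$ with $\|\v_2\|_\infty\le\mathrm{M}_d\|\Arm(\v_2)\|_4$ by \eqref{Sobolev+Korn}, hence $\|\v_2\|_\infty^2\in\mathrm{L}^2(\mathfrak{r},\mathfrak{r}+T)$ since $\v_2\in\mathrm{L}^4_{\mathrm{loc}}(\mathfrak{r},\infty;\mathbb{W}^{1,4}_0(\O))$. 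Combining this, via Young's inequality, with \eqref{J1-J2}--\eqref{K1-K2} --- where, as above, the nonnegative $\K$-difference absorbs the $\J$-difference and leaves a strictly positive dissipation $\big(\nu-\tfrac{|\alpha|^2}{2\beta}\big)\|\nabla\w\|_2^2>0$ --- yields a differential inequality $\tfrac{\d}{\d t}\|\w\|_2^2\le\Lambda_\omega(t)\|\w\|_2^2$ with $\Lambda_\omega\in\mathrm{L}^1(\mathfrak{r},\mathfrak{r}+T)$, and since $\w(\mathfrak{r})=\boldsymbol{0}$, Gr\"onwall's inequality forces $\w\equiv\boldsymbol{0}$. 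It is exactly the regularizing term $\K$ --- equivalently, the embedding $\mathbb{W}^{1,4}_0(\O)\hookrightarrow\L^\infty(\O)$ in \eqref{Sobolev+Korn} --- that makes uniqueness hold also in dimension three.

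I expect the main obstacle to be the passage to the limit in the gradient-type nonlinearities $\J,\K$: unlike for the Navier--Stokes equations these terms are not weakly continuous, so the monotonicity method is unavoidable, and one must check carefully that $|\alpha|<\sqrt{2\nu\beta}$ genuinely makes $\nu\A+\alpha\z^{-1}\J+\beta\z^{-2}\K+\lambda$ monotone uniformly for $t\in[\mathfrak{r},\mathfrak{r}+T]$ and that the lower-order remainder produced when absorbing the $\J$-difference does not exhaust the dissipation supplied by $\nu\A$. Everything else is routine given the functional framework and inequalities of Section~\ref{Sec2} and the fact that, $\omega$ being fixed, the random coefficients are harmless bounded continuous functions of $t$.
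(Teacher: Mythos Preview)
Your proposal is correct and follows exactly the approach indicated by the paper: a Faedo--Galerkin approximation combined with the Minty--Browder monotonicity technique, adapted from \cite{Hamza+Paicu_2007,KK+FC1,KK+FC2} to the pathwise random-coefficient setting. The paper in fact omits all details and simply refers to \cite[Theorem~3.1]{KK+FC1} and \cite[Theorem~3.8]{KK+FC2}, so your sketch is more complete than what appears there.
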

	
	\begin{proof}
		An application of the standard Faedo-Galerkin approximation method and Minty-Browder technique ensures that for all $t\geq \mathfrak{r},\ \mathfrak{r}\in\R$, $\omega\in\Omega$, for every $\v_{\mathfrak{r}}\in\H$ and $\f\in\mathrm{L}^{2}_{\text{loc}}(\R;\H^{-1}(\O))$, the system \eqref{CTGF} has a unique weak solution $\v\in\mathrm{C}([\mathfrak{r},+\infty);\H)\cap\mathrm{L}^2_{\mathrm{loc}}(\mathfrak{r},+\infty;\V)\cap\mathrm{L}^4_{\mathrm{loc}}(\mathfrak{r},+\infty;\mathbb{W}^{1,4}(\O))$ in the sense of Definition \ref{def-CTGF}. Since the proof closely follows those of \cite[Theorem 3.1]{KK+FC1} and \cite[Theorem 3.8]{KK+FC2}, we are not repeating it here. 
	\end{proof}

	 Hence, for every $\u_{\mathfrak{r}}\in\H$ and $\f\in\mathrm{L}^{2}_{\text{loc}}(\R;\H^{-1}(\O))$, there exists a unique solution $\u(\cdot)$ to the system \eqref{STGF} in the sense of Definition \ref{GWS}. For $t\in\R^+, \mathfrak{r}\in\R, \omega\in\Omega$ and $\u_{\mathfrak{r}}\in\H$, define a map $\Phi:\R^+\times\R\times\Omega\times\H\to\H$  by 
	\begin{align}\label{Phi}
		\Phi(t,\mathfrak{r},\omega,\u_{\mathfrak{r}}) =\u(t+\mathfrak{r};\mathfrak{r},\vartheta_{-\mathfrak{r}}\omega,\u_{\mathfrak{r}})=\frac{\v(t+\mathfrak{r};\mathfrak{r},\vartheta_{-\mathfrak{r}}\omega,\v_{\mathfrak{r}})}{\z(t+\mathfrak{r},\vartheta_{-\mathfrak{r}}\omega)},
	\end{align}
	with $\v_{\mathfrak{r}}=\z(\mathfrak{r},\vartheta_{-\mathfrak{r}}\omega)\u_{\mathfrak{r}}$.

	The following lemma plays a crucial role in  proving the $(\mathscr{F},\mathscr{B}(\H))$-measurability  of the map $\Phi(t,\mathfrak{r},\cdot,\x):\Omega\to\mathbb{H}$, for every fixed $t>0$, $\mathfrak{r}\in\R$ and $\x\in\H$.

 \begin{lemma}\label{LusinC}
	Assume that $\mathfrak{r}\in\R$, $t>\mathfrak{r}$, $\f\in\mathrm{L}^2_{\mathrm{loc}}(\R;\H^{-1}(\O))$ and $\v_{\mathfrak{r}}\in\H$. For each $N\in\N$, the mapping $\omega\mapsto\v(t;\mathfrak{r},\omega,\v_{\mathfrak{r}})$ $($solution of \eqref{CTGF}$)$ is continuous from $(\Omega_{N},d_{\Omega_N})$ to $\H$.
\end{lemma}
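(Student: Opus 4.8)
The plan is to establish the continuity by a direct pathwise energy estimate on the difference of two solutions of \eqref{CTGF}, reusing the monotonicity structure from the proof of Lemma~\ref{lem-Sol-CTGF} together with the pathwise convergence of the Ornstein--Uhlenbeck data supplied by Lemma~\ref{conv_z}. Fix $\mathfrak{r}\in\R$, $t>\mathfrak{r}$ and put $T:=t-\mathfrak{r}$. Let $\omega_{\mathfrak{k}},\omega_0\in\Omega_N$ with $d_{\Omega}(\omega_{\mathfrak{k}},\omega_0)\to0$, and write $\v_{\mathfrak{k}}(\cdot):=\v(\cdot;\mathfrak{r},\omega_{\mathfrak{k}},\v_{\mathfrak{r}})$, $\v_0(\cdot):=\v(\cdot;\mathfrak{r},\omega_0,\v_{\mathfrak{r}})$, $\z_{\mathfrak{k}}(s):=\z(s,\omega_{\mathfrak{k}})$, $\z_0(s):=\z(s,\omega_0)$. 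First I would record the standard energy estimate for \eqref{CTGF} (test with $\v_{\mathfrak{k}}$, use $\langle\K(\v_{\mathfrak{k}}),\v_{\mathfrak{k}}\rangle=\tfrac12\|\Arm(\v_{\mathfrak{k}})\|_{4}^{4}$, \eqref{Gag-Korn-ineq}, and the uniform bound \eqref{conv_z2} on $\sup_{s\in[\mathfrak{r},\mathfrak{r}+T]}|y(\vartheta_s\omega_{\mathfrak{k}})|$), which shows that $\{\v_{\mathfrak{k}}\}_{\mathfrak{k}\in\N}$ is bounded, uniformly in $\mathfrak{k}$, in $\mathrm{C}([\mathfrak{r},\mathfrak{r}+T];\H)\cap\mathrm{L}^2(\mathfrak{r},\mathfrak{r}+T;\V)\cap\mathrm{L}^4(\mathfrak{r},\mathfrak{r}+T;\mathbb{W}^{1,4}_0(\O))$, with the same bound valid for $\v_0$; moreover, by Lemma~\ref{conv_z} and the Lipschitz continuity of $x\mapsto e^{\pm\sigma x}$, $e^{\pm2\sigma x}$ on bounded sets, $\z_{\mathfrak{k}}^{\pm1}\to\z_0^{\pm1}$ and $\z_{\mathfrak{k}}^{\pm2}\to\z_0^{\pm2}$ uniformly on $[\mathfrak{r},\mathfrak{r}+T]$.

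Next I would set $\boldsymbol{w}_{\mathfrak{k}}:=\v_{\mathfrak{k}}-\v_0$, subtract the two copies of \eqref{W-TGF}, and test with $\boldsymbol{w}_{\mathfrak{k}}(s)$, noting that $\boldsymbol{w}_{\mathfrak{k}}(\mathfrak{r})=0$ since both solutions start from $\v_{\mathfrak{r}}$. Each term on the right splits into a \emph{frozen} part, in which the random coefficient is $\z_{\mathfrak{k}}^{\pm1}$ or $\z_{\mathfrak{k}}^{\pm2}$ and which mimics the corresponding term in the uniqueness proof behind Lemma~\ref{lem-Sol-CTGF}, plus a \emph{perturbation} part carrying a factor $\z_{\mathfrak{k}}^{\pm j}-\z_0^{\pm j}$ or $y(\vartheta_s\omega_{\mathfrak{k}})-y(\vartheta_s\omega_0)$. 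For the frozen parts: the convective term reduces via \eqref{b0} to $-\z_{\mathfrak{k}}^{-1}b(\boldsymbol{w}_{\mathfrak{k}},\v_{\mathfrak{k}},\boldsymbol{w}_{\mathfrak{k}})$ and, by Hölder, \eqref{Gen_lady-4}, \eqref{Gag-Korn-ineq}, \eqref{Korn-ineq} and Young, is bounded by a fraction of $\nu\|\nabla\boldsymbol{w}_{\mathfrak{k}}\|_{2}^{2}$, a fraction of $\tfrac{\beta}{2}\z_{\mathfrak{k}}^{-2}\int_{\O}|\Arm(\boldsymbol{w}_{\mathfrak{k}})|^{2}|\Arm(\v_{\mathfrak{k}})|^{2}\d x$, and $C(s)\|\boldsymbol{w}_{\mathfrak{k}}\|_{2}^{2}$ with $C\in\mathrm{L}^1$; the $\J$-difference is estimated via \eqref{J1-J2} and, using $|\alpha|<\sqrt{2\nu\beta}$ from \eqref{third-grade-paremeters-res}, absorbed into $\nu\|\nabla\boldsymbol{w}_{\mathfrak{k}}\|_{2}^{2}$ together with the monotone $\K$-difference; and $-\beta\z_{\mathfrak{k}}^{-2}\langle\K(\v_{\mathfrak{k}})-\K(\v_0),\boldsymbol{w}_{\mathfrak{k}}\rangle\le0$ by \eqref{K1-K2} is kept on the left. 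For the perturbation parts, e.g. $-(\z_{\mathfrak{k}}^{-1}-\z_0^{-1})b(\v_0,\v_0,\boldsymbol{w}_{\mathfrak{k}})$, $-\alpha(\z_{\mathfrak{k}}^{-1}-\z_0^{-1})\langle\J(\v_0),\boldsymbol{w}_{\mathfrak{k}}\rangle$, $-\beta(\z_{\mathfrak{k}}^{-2}-\z_0^{-2})\langle\K(\v_0),\boldsymbol{w}_{\mathfrak{k}}\rangle$, $\sigma(y(\vartheta_s\omega_{\mathfrak{k}})-y(\vartheta_s\omega_0))(\v_0,\boldsymbol{w}_{\mathfrak{k}})$ and $\langle(\z_{\mathfrak{k}}(s)-\z_0(s))\f(s),\boldsymbol{w}_{\mathfrak{k}}\rangle$, Young's inequality gives a bound by a fraction of the left-hand dissipation, $C(s)\|\boldsymbol{w}_{\mathfrak{k}}(s)\|_{2}^{2}$, and a nonnegative term $\rho_{\mathfrak{k}}(s)$ equal to a coefficient-difference factor times an $\mathrm{L}^1$-in-$s$ function of the uniform norms of $\v_0$, $\v_{\mathfrak{k}}$ and $\f$.

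Collecting everything yields, on $[\mathfrak{r},\mathfrak{r}+T]$, a differential inequality $\tfrac{\d}{\d s}\|\boldsymbol{w}_{\mathfrak{k}}(s)\|_{2}^{2}\le C(s)\|\boldsymbol{w}_{\mathfrak{k}}(s)\|_{2}^{2}+\widetilde\rho_{\mathfrak{k}}(s)$ with $C\in\mathrm{L}^1(\mathfrak{r},\mathfrak{r}+T)$ independent of $\mathfrak{k}$ and $\int_{\mathfrak{r}}^{\mathfrak{r}+T}\widetilde\rho_{\mathfrak{k}}(s)\d s\to0$ as $\mathfrak{k}\to\infty$ (by the uniform convergences $\z_{\mathfrak{k}}^{\pm j}\to\z_0^{\pm j}$ and $y(\vartheta_s\omega_{\mathfrak{k}})\to y(\vartheta_s\omega_0)$ of Lemma~\ref{conv_z}, together with dominated convergence). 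Since $\boldsymbol{w}_{\mathfrak{k}}(\mathfrak{r})=0$, Grönwall's lemma gives $\sup_{s\in[\mathfrak{r},\mathfrak{r}+T]}\|\v_{\mathfrak{k}}(s)-\v_0(s)\|_{2}^{2}\le e^{\|C\|_{\mathrm{L}^1}}\int_{\mathfrak{r}}^{\mathfrak{r}+T}\widetilde\rho_{\mathfrak{k}}(s)\d s\to0$, whence in particular $\v(t;\mathfrak{r},\omega_{\mathfrak{k}},\v_{\mathfrak{r}})\to\v(t;\mathfrak{r},\omega_0,\v_{\mathfrak{r}})$ in $\H$; as the limit is determined by $\omega_0$ alone, this proves the asserted continuity from $(\Omega_N,d_{\Omega_N})$ to $\H$. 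I expect the main obstacle to be the frozen-coefficient bookkeeping: one must reproduce, with the time-dependent random coefficients $\z_{\mathfrak{k}}^{\pm1},\z_{\mathfrak{k}}^{\pm2}$ in place of constants, the delicate absorption of the convective term and of the $\J$-difference into $\nu\|\nabla\boldsymbol{w}_{\mathfrak{k}}\|_{2}^{2}$ and the good $\K$-difference (this is precisely where $|\alpha|<\sqrt{2\nu\beta}$ and the Korn/Gagliardo--Nirenberg inequalities \eqref{Korn-ineq}, \eqref{Gag-Korn-ineq} enter), and then to verify that the residual $\widetilde\rho_{\mathfrak{k}}$ depends linearly enough on the coefficient differences to vanish in $\mathrm{L}^1$ in time. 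A compactness-based alternative --- extracting a subsequence converging weakly in the energy spaces and strongly in $\mathrm{L}^2(\mathfrak{r},\mathfrak{r}+T;\H)$ by Aubin--Lions, passing to the limit in \eqref{W-TGF} via Lemma~\ref{conv_z} and a Minty--Browder argument for $\J,\K$, and identifying the limit by the uniqueness of Lemma~\ref{lem-Sol-CTGF} --- would also work, but it requires an additional energy-equality step to upgrade to convergence at the fixed time $t$, so the direct estimate above is preferable.
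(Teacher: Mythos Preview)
Your proposal is correct and follows essentially the same strategy as the paper's proof: subtract the two copies of \eqref{CTGF}, test with $\boldsymbol{w}_{\mathfrak{k}}$, split each nonlinear term into a frozen-coefficient part (absorbed via \eqref{J1-J2}--\eqref{K1-K2} and the constraint $|\alpha|<\sqrt{2\nu\beta}$) plus a coefficient-difference perturbation, apply Gr\"onwall with zero initial difference, and conclude from the uniform convergences of Lemma~\ref{conv_z}. The only cosmetic differences are that the paper handles the frozen convective term via \eqref{Sobolev+Korn} with $\v^0$ in the middle slot (putting the resulting $C\z_{\mathfrak{k}}^{-2}\|\Arm(\v^0)\|_4^2$ into the Gr\"onwall coefficient, which therefore depends on $\omega_{\mathfrak{k}}$ but has uniformly bounded $L^1$-norm by \eqref{conv_z2}) rather than absorbing it into the good $\K$-difference term as you propose.
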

\begin{proof}
	Assume that $\omega_\ell,\omega_0\in\Omega_N,\ N\in\mathbb{N}$ such that $d_{\Omega_N}(\omega_\ell,\omega_0)\to0$ as $\ell\to+\infty$. Let $\yscr^\ell:=\v^\ell-\v^0,$ where $\v^\ell(\cdot)=\v(\cdot;\mathfrak{r},\omega_\ell,\v_{\mathfrak{r}})$ and $\v^0(\cdot)=\v(\cdot;\mathfrak{r},\omega_0,\v_{\mathfrak{r}})$. Then, $\yscr^\ell$ satisfies:
	\begin{align}\label{LC1}
		 & \frac{\d\yscr^\ell}{\d t}
		 \nonumber\\ &=-\nu \A\yscr^\ell-\left[\frac{\sigma^2}{2}-\sigma y(\vartheta_{t}\omega_\ell)\right]\yscr^\ell -\z^{-1}(t,\omega_\ell)\B\big(\v^\ell\big)+\z^{-1}(t,\omega_0)\B\big(\v^0\big)
		 - \alpha\z^{-1}(t,\omega_\ell)\J\big(\v^\ell\big) 
		  \nonumber\\&\quad+ \alpha\z^{-1}(t,\omega_0)\J\big(\v^0\big)  -\beta\z^{-2}(t,\omega_\ell)\K\big(\v^\ell\big)+\beta\z^{-2}(t,\omega_0)\K\big(\v^0\big)\
		 +\sigma \left[y(\vartheta_{t}\omega_\ell)-y(\vartheta_{t}\omega_0)\right]\v^0
		 \nonumber\\ & \quad + \left[\z(t,\omega_\ell)-\z(t,\omega_0)\right]\mathcal{P}\f
		\nonumber\\ & =-\nu \A\yscr^\ell-\left[\frac{\sigma^2}{2}-\sigma y(\vartheta_{t}\omega_\ell)\right]\yscr^\ell -\z^{-1}(t,\omega_\ell)[\B\big(\v^\ell,\yscr^\ell\big)-\B\big(\yscr^\ell,\v^0\big)] - [\z^{-1}(t,\omega_\ell) - \z^{-1}(t,\omega_0)]\B\big(\v^0\big)
		\nonumber\\&\quad - \alpha\z^{-1}(t,\omega_\ell)[\J\big(\v^\ell\big)-\J\big(\v^0\big)]- \alpha[\z^{-1}(t,\omega_0)-\z^{-1}(t,\omega_\ell)]\J\big(\v^0\big)  -\beta\z^{-2}(t,\omega_\ell)[\K\big(\v^\ell\big)-\K\big(\v^0\big)]
		\nonumber\\ & \quad -\beta[\z^{-2}(t,\omega_\ell)-\z^{-2}(t,\omega_0)]\K\big(\v^0\big)\
		  +\sigma \left[y(\vartheta_{t}\omega_\ell)-y(\vartheta_{t}\omega_0)\right]\v^0+ \left[\z(t,\omega_\ell)-\z(t,\omega_0)\right]\mathcal{P}\f,
	\end{align}
in $\X'$. Taking the inner product to equation \eqref{LC1} with $\yscr^\ell(\cdot)$ and using \eqref{b0},  we find
\begin{align}\label{LC2}
	& \frac{1}{2} \frac{\d }{\d t} \|\yscr^\ell\|^2_2 + \frac{\nu}{2}\|\Arm(\yscr^\ell)\|^2_2 + \frac{\sigma^2}{2}\|\yscr^\ell\|^2_2 
	\nonumber\\ & = \sigma y(\vartheta_{t}\omega_\ell)\|\yscr^\ell\|^2_2 + \z^{-1}(t,\omega_\ell)b\big(\yscr^\ell, \yscr^\ell ,\v^0\big) + [\z^{-1}(t,\omega_\ell) - \z^{-1}(t,\omega_0)]b\big(\v^0, \yscr^\ell,\v^0\big) 
	\nonumber\\&\quad - \alpha\z^{-1}(t,\omega_\ell)\left< \J\big(\v^\ell\big)-\J\big(\v^0\big),\yscr^\ell\right> - \alpha[\z^{-1}(t,\omega_0)-\z^{-1}(t,\omega_\ell)]\left<\J\big(\v^0\big) ,\yscr^\ell \right> 
	\nonumber\\ & \quad  -\beta\z^{-2}(t,\omega_\ell)\left<\K\big(\v^\ell\big)-\K\big(\v^0\big) ,\yscr^\ell\right> 
	 -\beta[\z^{-2}(t,\omega_\ell)-\z^{-2}(t,\omega_0)]\left<\K\big(\v^0\big) ,\yscr^\ell\right> 
	 \nonumber\\ & \quad +\sigma \left[y(\vartheta_{t}\omega_\ell)-y(\vartheta_{t}\omega_0)\right]\left(\v^0, \yscr^\ell\right)+ \left[\z(t,\omega_\ell)-\z(t,\omega_0)\right] \left<\f,\yscr^\ell\right>.
\end{align}
For $\varepsilon_0:=1-\sqrt{\frac{\alpha^2}{2\beta\nu}}\in(0,1)$, an application of H\"older's inequality, \eqref{Sobolev+Korn}, \eqref{J1-J2}, \eqref{K1-K2} and Young's inequality give
\begin{align}
	|\z^{-1}(t,\omega_\ell)b\big(\yscr^\ell, \yscr^\ell ,\v^0\big)|& \z^{-1}(t,\omega_\ell)\|\yscr^\ell\|_2 \|\nabla\yscr^\ell\|_2 \|\v^0\|_{\infty}
	\nonumber\\ & \leq C \z^{-1}(t,\omega_\ell)\|\yscr^\ell\|_2 \|\Arm(\yscr^\ell)\|_2 \|\Arm(\v^0)\|_{4}
	\nonumber\\ &\leq  \frac{\nu\varepsilon_0}{16}\|\Arm(\yscr^\ell)\|^2_2 +  C \z^{-2}(t,\omega_\ell) \|\Arm(\v^0)\|^2_{4}\|\yscr^\ell\|_2^2,\label{LC3} \\
	|[\z^{-1}(t,\omega_\ell) - \z^{-1}(t,\omega_0)] b\big(\v^0, \yscr^\ell,\v^0\big)|& \leq \frac{\nu\varepsilon_0}{16}\|\Arm(\yscr^\ell)\|^2_2 +  C [\z^{-1}(t,\omega_\ell) - \z^{-1}(t,\omega_0)] ^2 \|\Arm(\v^0)\|^2_{4}\|\v^0\|_2^2, \label{LC4} \\
     |\alpha\z^{-1}(t,\omega_\ell)\left< \J\big(\v^\ell\big)-\J\big(\v^0\big),\yscr^\ell\right>| & \leq  \frac{|\alpha|}{2} \z^{-1}(t,\omega_\ell) \int_{\O} |\Arm(\yscr^\ell)|^2( |\Arm(\v^\ell)|+|\Arm(\v^0)|) \d x
     \nonumber\\ & \leq \frac{\alpha^2}{4\nu(1-\varepsilon_0)} \z^{-2}(t,\omega_\ell) \int_{\O} |\Arm(\yscr^\ell)|^2 ( |\Arm(\v^\ell)|^2+|\Arm(\v^0)|^2) \d x  
     \nonumber\\ & \quad + \frac{\nu(1-\varepsilon_0)}{2}\|\Arm(\yscr^\ell)\|_{2}^2
     \nonumber\\ & = \frac{\beta(1-\varepsilon_0)}{2} \z^{-2}(t,\omega_\ell) \int_{\O} |\Arm(\yscr^\ell)|^2( |\Arm(\v^\ell)|^2+|\Arm(\v^0)|^2) \d x 
     \nonumber\\ & \quad + \frac{\nu(1-\varepsilon_0)}{2}\|\Arm(\yscr^\ell)\|_{2}^2 , \label{LC5} \\
      |\alpha[\z^{-1}(t,\omega_0)-\z^{-1}(t,\omega_\ell)]\left<\J\big(\v^0\big) ,\yscr^\ell \right> | & \leq \frac{\alpha}{2}|\z^{-1}(t,\omega_0)-\z^{-1}(t,\omega_\ell)| \|\Arm(\v^0)\|^2_4 \|\Arm(\yscr^\ell)\|_2
      \nonumber\\ & \leq \frac{\nu\varepsilon_0}{16}\|\Arm(\yscr^\ell)\|^2_2  + C [\z^{-1}(t,\omega_0)-\z^{-1}(t,\omega_\ell)]^2 \|\Arm(\v^0)\|^4_4,  \label{LC6}  \\
	\beta\z^{-2}(t,\omega_\ell)\left<\K\big(\v^\ell\big)-\K\big(\v^0\big) ,\yscr^\ell\right>  & \geq  \frac{\beta}{2} \z^{-2}(t,\omega_\ell) \int_{\O} |\Arm(\yscr^\ell)|^2( |\Arm(\v^\ell)|^2+|\Arm(\v^0)|^2) \d x, \label{LC7} \\
	|\beta[\z^{-2}(t,\omega_\ell)-\z^{-2}(t,\omega_0)]\left<\K\big(\v^0\big) ,\yscr^\ell\right>|  & \leq  \beta|\z^{-2}(t,\omega_\ell)-\z^{-2}(t,\omega_0)|\|\Arm(\v^0)\|^3_4 \|\Arm(\yscr^\ell)\|_4
	\nonumber\\ & \leq  C |1-\z^{2}(t,\omega_\ell)\z^{-2}(t,\omega_0)|^{\frac{4}{3}}\|\Arm(\v^0)\|^4_4 
	  \nonumber\\ & \quad + \frac{\beta\varepsilon_0}{8}\z^{-2}(t,\omega_\ell)\|\Arm(\yscr^\ell)\|^4_4 
	  \nonumber\\ & \leq  C |1-\z^{2}(t,\omega_\ell)\z^{-2}(t,\omega_0)|^{\frac{4}{3}}\|\Arm(\v^0)\|^4_4 
	  \nonumber\\ & \quad + \frac{\beta\varepsilon_0}{4} \z^{-2}(t,\omega_\ell) \int_{\O} |\Arm(\yscr^\ell)|^2( |\Arm(\v^\ell)|^2+|\Arm(\v^0)|^2) \d x, \label{LC8}   \\  
	|\sigma \left[y(\vartheta_{t}\omega_\ell)-y(\vartheta_{t}\omega_0)\right]\left(\v^0, \yscr^\ell\right)| & \leq  C |y(\vartheta_{t}\omega_\ell)-y(\vartheta_{t}\omega_0)|^2\|\v^0\|_2^2+ \frac{\sigma^2}{8}\| \yscr^\ell\|^2_2, \label{LC9} \\
	 |\left[\z(t,\omega_\ell)-\z(t,\omega_0)\right] \left<\f,\yscr^\ell\right>| & \leq |\z(t,\omega_\ell)-\z(t,\omega_0)| \|\f\|_{\H^{-1}}  \|\yscr^\ell\|_{\V} 
	 \nonumber\\ & \leq  \min\left\{\frac{\nu\varepsilon_0}{8}, \frac{\sigma^2}{8}\right\} \|\yfrak^k\|^2_{\V} + C|\z(t,\omega_\ell)-\z(t,\omega_0)|^2 \|\f\|^2_{\H^{-1}} 
	 \nonumber\\ & \leq \frac{\nu\varepsilon_0}{8} \|\nabla\yscr^\ell\|^2_{2} + \frac{\sigma^2}{8} \|\yscr^\ell\|^2_{2} + C |\z(t,\omega_\ell)-\z(t,\omega_0)|^2 \|\f\|^2_{\H^{-1}}
	 \nonumber\\ & = \frac{\nu\varepsilon_0}{16} \|\Arm(\yscr^\ell)\|^2_{2} + \frac{\sigma^2}{8} \|\yscr^\ell\|^2_{2} + C |\z(t,\omega_\ell)-\z(t,\omega_0)|^2 \|\f\|^2_{\H^{-1}}.\label{LC10}
\end{align}
A combination of \eqref{LC2}-\eqref{LC10} infers
\begin{align}\label{LC11}
	  \frac{\d }{\d t} \|\yscr^\ell\|^2_2 
	  & \leq  2\sigma |y(\vartheta_{t}\omega_\ell)|\|\yscr^\ell\|^2_2 + C \z^{-2}(t,\omega_\ell) \|\Arm(\v^0)\|^2_{4}\|\yscr^\ell\|_2^2 +  C |\z^{-1}(t,\omega_\ell) - \z^{-1}(t,\omega_0)|^2 \|\Arm(\v^0)\|^2_{4}\|\v^0\|_2^2
	\nonumber\\ & \quad + C |\z^{-1}(t,\omega_0)-\z^{-1}(t,\omega_\ell)|^2 \|\Arm(\v^0)\|^4_4 + C |1-\z^{2}(t,\omega_\ell)\z^{-2}(t,\omega_0)|^{\frac{4}{3}}\|\Arm(\v^0)\|^4_4 
	\nonumber\\ & \quad + C |y(\vartheta_{t}\omega_\ell)-y(\vartheta_{t}\omega_0)|^2\|\v^0\|_2^2 + C |\z(t,\omega_\ell)-\z(t,\omega_0)|^2 \|\f\|^2_{\H^{-1}}.
\end{align}
In view of Gronwall's inequality, we  deduce from \eqref{LC11} 
\begin{align}\label{LC12}
	&\|\yscr^\ell(t)\|^2_2 
	\nonumber\\ & \leq C \int_{\mathfrak{r}}^{\mathfrak{r}+T} \bigg[ |\z^{-1}(t,\omega_\ell) - \z^{-1}(t,\omega_0)|^2 \|\Arm(\v^0(t))\|^2_{4}\|\v^0(t)\|_2^2
	 +  |\z^{-1}(t,\omega_0)-\z^{-1}(t,\omega_\ell)|^2 \|\Arm(\v^0(t))\|^4_4 
	 \nonumber\\ & \quad +  |1-\z^{2}(t,\omega_\ell)\z^{-2}(t,\omega_0)|^{\frac{4}{3}}\|\Arm(\v^0(t))\|^4_4 
  +  |y(\vartheta_{t}\omega_\ell)-y(\vartheta_{t}\omega_0)|^2\|\v^0(t)\|_2^2 
  \nonumber\\ & \quad +  |\z(t,\omega_\ell)-\z(t,\omega_0)|^2 \|\f(t)\|^2_{\H^{-1}}\bigg] \d t \times \exp\left\{\int_{\mathfrak{r}}^{\mathfrak{r}+T}\left[2\sigma |y(\vartheta_{t}\omega_\ell)| + C \z^{-2}(t,\omega_\ell) \|\Arm(\v^0(t))\|^2_{4}\right]\d t\right\},
\end{align}
for any $T>0$. Now, making use of Lemma \ref{conv_z} and the fact that $\v^0\in  \mathrm{C}([\mathfrak{r},\mathfrak{r}+T];\H)\cap\mathrm{L}^2(\mathfrak{r},\mathfrak{r}+T;\V)\cap\mathrm{L}^4(\mathfrak{r},\mathfrak{r}+T;\mathbb{W}^{1,4}(\O))$ in \eqref{LC12}, we  conclude the proof.
\end{proof}

The following lemma plays a crucial role in  proving the continuity  of the map $\Phi(t,\mathfrak{r},\omega,\cdot):\H\to\H$, for every fixed $t>0$, $\mathfrak{r}\in\R$ and $\omega\in\Omega$.

\begin{lemma}\label{Continuity}
	Assume that $\f\in \mathrm{L}^2_{\mathrm{loc}}(\mathbb{R};\H^{-1}(\O))$. Then,  the solution of \eqref{CTGF} is continuous with respect to initial data $\v_{\mathfrak{r}}.$
\end{lemma}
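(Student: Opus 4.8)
The plan is to establish an $\L^2$-stability estimate for the difference of two solutions of \eqref{CTGF} with different initial data and then close it with Gronwall's inequality, following the same scheme as in the proof of Lemma~\ref{LusinC} but now perturbing the initial datum instead of the noise path. Fix $\mathfrak{r}\in\R$ and $\omega\in\Omega$, and let $\v^1(\cdot)=\v(\cdot;\mathfrak{r},\omega,\v^1_{\mathfrak{r}})$ and $\v^2(\cdot)=\v(\cdot;\mathfrak{r},\omega,\v^2_{\mathfrak{r}})$ be the weak solutions furnished by Lemma~\ref{lem-Sol-CTGF} for initial data $\v^1_{\mathfrak{r}},\v^2_{\mathfrak{r}}\in\H$. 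Setting $\yscr:=\v^1-\v^2$, subtraction of the two copies of \eqref{CTGF} gives, in $\X'$,
\begin{align*}
	\frac{\d\yscr}{\d t}+\nu\A\yscr+\Big[\tfrac{\sigma^2}{2}-\sigma y(\vartheta_t\omega)\Big]\yscr+\z^{-1}(t,\omega)[\B(\v^1)-\B(\v^2)]+\alpha\z^{-1}(t,\omega)[\J(\v^1)-\J(\v^2)]+\beta\z^{-2}(t,\omega)[\K(\v^1)-\K(\v^2)]=0,
\end{align*}
with $\yscr(\mathfrak{r})=\v^1_{\mathfrak{r}}-\v^2_{\mathfrak{r}}$. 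Since $\v^1,\v^2\in\mathrm{L}^2_{\mathrm{loc}}(\mathfrak{r},\infty;\V)\cap\mathrm{L}^4_{\mathrm{loc}}(\mathfrak{r},\infty;\mathbb{W}^{1,4}_0(\O))$ with time derivatives in the matching dual spaces, a standard Lions--Magenes argument makes $t\mapsto\|\yscr(t)\|_2^2$ absolutely continuous and justifies testing the difference equation with $\yscr(t)$.

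After pairing with $\yscr$ and using \eqref{b0}, the convective difference becomes $\langle\B(\v^1)-\B(\v^2),\yscr\rangle=b(\yscr,\v^2,\yscr)=-b(\yscr,\yscr,\v^2)$, which by \eqref{Sobolev+Korn} together with H\"older's and Young's inequalities is bounded by $\frac{\nu\varepsilon_0}{8}\|\Arm(\yscr)\|_2^2+C\z^{-2}(t,\omega)\|\Arm(\v^2)\|_4^2\|\yscr\|_2^2$, where $\varepsilon_0:=1-\sqrt{\alpha^2/(2\beta\nu)}\in(0,1)$ by \eqref{third-grade-paremeters-res}. The $\J$-difference is estimated through \eqref{J1-J2} and Young's inequality, with the weight tuned so that $\frac{\alpha^2}{4\nu(1-\varepsilon_0)}=\frac{\beta(1-\varepsilon_0)}{2}$, yielding $\frac{\beta(1-\varepsilon_0)}{2}\z^{-2}(t,\omega)\int_{\O}|\Arm(\yscr)|^2(|\Arm(\v^1)|^2+|\Arm(\v^2)|^2)\d x+\frac{\nu(1-\varepsilon_0)}{2}\|\Arm(\yscr)\|_2^2$; while the $\K$-difference is handled by the exact identity \eqref{K1-K2}, which gives the lower bound $\beta\z^{-2}(t,\omega)\langle\K(\v^1)-\K(\v^2),\yscr\rangle\ge\frac{\beta}{2}\z^{-2}(t,\omega)\int_{\O}|\Arm(\yscr)|^2(|\Arm(\v^1)|^2+|\Arm(\v^2)|^2)\d x$. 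Collecting everything, the two quartic integrals combine with net coefficient $-\tfrac{\beta\varepsilon_0}{2}<0$ and hence drop out, the coefficients of $\|\Arm(\yscr)\|_2^2$ remain strictly below the $\tfrac{\nu}{2}\|\Arm(\yscr)\|_2^2$ coming from the dissipation (recall $\nu\|\nabla\yscr\|_2^2=\tfrac{\nu}{2}\|\Arm(\yscr)\|_2^2$), and one is left with
\begin{align*}
	\frac{\d}{\d t}\|\yscr(t)\|_2^2\le\Big(2\sigma|y(\vartheta_t\omega)|+C\z^{-2}(t,\omega)\|\Arm(\v^2(t))\|_4^2\Big)\|\yscr(t)\|_2^2.
\end{align*}
Gronwall's inequality on $[\mathfrak{r},\mathfrak{r}+T]$ then gives
\begin{align*}
	\sup_{t\in[\mathfrak{r},\mathfrak{r}+T]}\|\v^1(t)-\v^2(t)\|_2^2\le\|\v^1_{\mathfrak{r}}-\v^2_{\mathfrak{r}}\|_2^2\,\exp\left\{\int_{\mathfrak{r}}^{\mathfrak{r}+T}\Big(2\sigma|y(\vartheta_s\omega)|+C\z^{-2}(s,\omega)\|\Arm(\v^2(s))\|_4^2\Big)\d s\right\},
\end{align*}
and since $s\mapsto y(\vartheta_s\omega)$ and $s\mapsto\z^{-2}(s,\omega)$ are continuous on $[\mathfrak{r},\mathfrak{r}+T]$ and $\|\Arm(\v^2)\|_4^2\in\mathrm{L}^2(\mathfrak{r},\mathfrak{r}+T)\subset\mathrm{L}^1(\mathfrak{r},\mathfrak{r}+T)$, the exponential factor is finite (depending only on $\mathfrak{r},T,\omega$ and $\|\v^2\|_{\mathrm{L}^4(\mathfrak{r},\mathfrak{r}+T;\mathbb{W}^{1,4}_0(\O))}$). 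Letting $\|\v^1_{\mathfrak{r}}-\v^2_{\mathfrak{r}}\|_2\to0$ yields the continuity of $\v_{\mathfrak{r}}\mapsto\v(\cdot;\mathfrak{r},\omega,\v_{\mathfrak{r}})$ from $\H$ into $\mathrm{C}([\mathfrak{r},\mathfrak{r}+T];\H)$ for every $T>0$, in particular continuity with respect to the initial data.

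The only genuinely delicate step is the treatment of the $\J$-difference: by itself it carries no sign and it produces a quartic term $\int_{\O}|\Arm(\yscr)|^2(|\Arm(\v^1)|^2+|\Arm(\v^2)|^2)\d x$ which cannot be absorbed by the dissipation alone. The point is to pair it against the coercive part of the $\K$-difference supplied by \eqref{K1-K2} and to exploit the parameter restriction $|\alpha|<\sqrt{2\nu\beta}$ (equivalently $\varepsilon_0\in(0,1)$), so that the competition between the two quartic integrals resolves in favour of $\K$. This is precisely the structural feature of third-grade fluids that makes uniqueness and continuous dependence work even in three dimensions; beyond it the argument is a routine energy estimate plus Gronwall, so I do not expect further obstacles. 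A minor technical caveat is that, the solutions being only weak, the identity $\tfrac12\tfrac{\d}{\d t}\|\yscr\|_2^2=\langle\tfrac{\d\yscr}{\d t},\yscr\rangle$ must be justified via the standard interpolation/Lions--Magenes lemma, which applies here because both solutions lie in $\mathrm{L}^2_{\mathrm{loc}}(\mathfrak{r},\infty;\V)\cap\mathrm{L}^4_{\mathrm{loc}}(\mathfrak{r},\infty;\mathbb{W}^{1,4}_0(\O))$ with derivatives in the respective dual spaces.
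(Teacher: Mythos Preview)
Your proof is correct and follows essentially the same approach as the paper's own proof: subtract the two equations, test with the difference, estimate the convective term via \eqref{Sobolev+Korn}, the $\J$-difference via \eqref{J1-J2} with the parameter $\varepsilon_0=1-\sqrt{\alpha^2/(2\nu\beta)}$ tuned so that $\tfrac{\alpha^2}{4\nu(1-\varepsilon_0)}=\tfrac{\beta(1-\varepsilon_0)}{2}$, absorb the resulting quartic integral into the coercive $\K$-difference coming from \eqref{K1-K2}, and close with Gronwall. The only cosmetic difference is which of the two solutions appears in the Gronwall exponent (you get $\|\Arm(\v^2)\|_4^2$, the paper gets $\|\Arm(\wi\v)\|_4^2$), reflecting the two equivalent ways of splitting $\B(\v^1)-\B(\v^2)$; your added remark on the Lions--Magenes justification is a welcome technical clarification the paper leaves implicit.
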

\begin{proof}
	Let $\wi\v(\cdot):=\wi\v(\cdot;\mathfrak{r},\omega,\wi\v_{\mathfrak{r}})$ and $\v(\cdot):=\v(\cdot;\mathfrak{r},\omega,\v_{\mathfrak{r}})$ be two solutions of \eqref{CTGF}. Then $\yfrak(\cdot)=\wi\v(\cdot)-\v(\cdot)$ with $\yfrak(\mathfrak{r})=\wi\v_{\mathfrak{r}}-\v_{\mathfrak{r}}$ satisfies
	\begin{align}\label{Conti1}
		&	\frac{\d\yfrak}{\d t}+\nu \A\yfrak  +\left[\frac{\sigma^2}{2}-\sigma y(\vartheta_{t}\omega)\right]\yfrak \nonumber\\&=-\z^{-1}(t,\omega)\left\{\B\big(\wi\v\big)-\B\big(\v\big)\right\} - \alpha \z^{-1}(t,\omega)\left\{\J\big(\wi\v\big)-\J\big(\v\big)\right\} -\z^{-2}(t,\omega)\beta\left\{\K\big(\wi\v\big)-\K\big(\v\big)\right\}
		\nonumber\\ & = -\z^{-1}(t,\omega)\left\{\B\big(\yfrak,\wi\v\big)-\B\big(\v, \yfrak\big)\right\} - \alpha \z^{-1}(t,\omega)\left\{\J\big(\wi\v\big)-\J\big(\v\big)\right\}
		  -\z^{-2}(t,\omega)\beta\left\{\K\big(\wi\v\big)-\K\big(\v\big)\right\}.
	\end{align}
	in $\X'$.  Taking the inner product to equation \eqref{Conti1} by $\yfrak(\cdot)$ and using \eqref{b0}, we find
	\begin{align}\label{Conti2}
		&	\frac{1}{2}\frac{\d}{\d t}\|\yfrak\|_2^2 + \nu \|\nabla\yfrak\|_2^2  +\frac{\sigma^2}{2}\|\yfrak\|_2^2 
		\nonumber\\ & = \sigma y(\vartheta_{t}\omega) \|\yfrak\|_2^2  + \z^{-1}(t,\omega) b\big(\yfrak, \yfrak ,\wi\v\big)  - \alpha \z^{-1}(t,\omega)\left\{\J\big(\wi\v\big)-\J\big(\v\big)\right\}
		   -\z^{-2}(t,\omega)\beta\left\{\K\big(\wi\v\big)-\K\big(\v\big)\right\}.
	\end{align}
	 An application of \eqref{Sobolev+Korn}, and H\"older's and Young's inequalities give
	\begin{align}\label{Conti3}
		|\z^{-1}(t,\omega) b\big(\yfrak , \yfrak ,\wi\v\big)| & \leq \z^{-1}(t,\omega) \|\yfrak\|_2 \|\nabla\yfrak\|_2 \|\wi\v\|_{\infty} 
		\nonumber\\ & \leq \M_d \z^{-1}(t,\omega) \|\nabla\yfrak\|_2 \|\yfrak\|_2 \|\Arm(\wi\v)\|_{4} 
		\nonumber\\ & \leq  \frac{\nu\varepsilon_0}{2}\|\nabla\yfrak\|_2^2 + \frac{(\M_d)^2}{2\nu\varepsilon_0} \z^{-2}(t,\omega)   \|\Arm(\wi\v)\|^2_{4} \|\yfrak\|^2_2.
	\end{align}

	For $\varepsilon_0:=1-\sqrt{\frac{\alpha^2}{2\beta\nu}}\in(0,1)$, from \eqref{J1-J2} and H\"older's inequality, we get
	\begin{align}\label{Conti4}
		& |\alpha\z^{-1}(t,\omega)\left<	\J(\wi\v) - \J(\v), \yfrak \right>| 
		  \nonumber\\ & \leq \frac{\nu(1-\varepsilon_0)}{2}\|\Arm(\yfrak)\|_{2}^2 + \frac{\alpha^2}{4\nu(1-\varepsilon_0)} \z^{-2}(t,\omega) \int_{\O} |\Arm(\yfrak)|^2 ( |\Arm(\wi\v)|^2+|\Arm(\v)|^2) \d x
		\nonumber\\ & = \nu(1-\varepsilon_0)\|\nabla\yfrak\|_{2}^2 + \frac{\beta(1-\varepsilon_0)}{2} \z^{-2}(t,\omega) \int_{\O} |\Arm(\yfrak)|^2( |\Arm(\wi\v)|^2+|\Arm(\v)|^2) \d x,
	\end{align}

From \eqref{K1-K2}, we have 
\begin{align}\label{Conti5}
	& \z^{-2}(t,\omega)\beta\left<	\K(\wi\v) - \K(\v), \yfrak \right>
	\nonumber\\ & = \frac{\beta}{2}\z^{-2}(t,\omega)\int_{\O}( |\Arm(\wi\v)|^2-|\Arm(\v)|^2)^2 \d x + \frac{\beta}{2}\z^{-2}(t,\omega)\int_{\O} |\Arm(\yfrak)|^2( |\Arm(\wi\v)|^2+|\Arm(\v)|^2) \d x.
\end{align}
Using \eqref{Conti3}-\eqref{Conti5} in \eqref{Conti2}, we reach at
\begin{align}\label{Conti6}
	&	\frac{1}{2}\frac{\d}{\d t}\|\yfrak\|_2^2 + \frac{\nu\varepsilon_0}{2} \|\nabla\yfrak\|_2^2  +\frac{\sigma^2}{2}\|\yfrak\|_2^2  + \frac{\beta\varepsilon_0}{2}\z^{-2}(t,\omega)\int_{\O} |\Arm(\yfrak)|^2( |\Arm(\wi\v)|^2+|\Arm(\v)|^2) \d x
	\nonumber\\ & \leq  \sigma |y(\vartheta_{t}\omega)| \|\yfrak\|_2^2  + \frac{(\M_d)^2}{2\nu\varepsilon_0} \z^{-2}(t,\omega)   \|\Arm(\wi\v)\|^2_{4} \|\yfrak\|^2_2.
\end{align}
 Now, making use of Gronwall inequality in \eqref{Conti6}, we conclude
\begin{align*}
	\|\yfrak(t)\|^2_2 \leq \|\yfrak(\mathfrak{r})\|^2_2 \cdot \exp \left\{\int_{\mathfrak{r}}^{t}\left[2\sigma |y(\vartheta_{s}\omega)|  + \frac{(\M_d)^2}{\nu\varepsilon_0} \z^{-2}(s,\omega)   \|\Arm(\wi\v(s))\|^2_{4}\right]\d s\right\},
\end{align*}
for all $t\geq \mathfrak{r}$. This completes the proof.
\end{proof}

Using the Lusin continuity established in Lemma \ref{LusinC}, we obtain that $\Phi$ is $\mathscr{F}$-measurable. Hence, combining Lemma \ref{lem-Sol-CTGF} with Lemma \ref{LusinC} and Lemma \ref{Continuity} shows that the mapping $\Phi$ defined in \eqref{Phi} constitutes a non-autonomous random dynamical system on $\H$ (cf. \cite{PeriodicWang}).

\subsection{A universe of tempered  random sets} Assume that $D=\{D(\mathfrak{r},\omega):\mathfrak{r}\in\R,\omega\in\Omega\}$ is a family of non-empty subsets of $\H$ satisfying, for every $c>0, \mathfrak{r}\in\R$ and $\omega\in\Omega$, 
\begin{align}\label{D_1}
	\lim_{t\to\infty}e^{-ct}\|D(\mathfrak{r}-t,\vartheta_{-t}\omega)\|^2_{\H}=0.
\end{align}
Let us define the \emph{universe} of tempered subsets of $\H$ as $$\mathfrak{D}:=\{D=\{D(\mathfrak{r},\omega):\mathfrak{r}\in\R\text{ and }\omega\in\Omega\}:D \text{ satisfying } \eqref{D_1}\}.$$
It is immediate that $\mathfrak{D}$ is neighborhood closed, \cite[Definition 2.2]{PeriodicWang}.

	\section{Random absorbing set associated with $\Phi$}\label{Sec3}\setcounter{equation}{0}

	In this section, we show the existence of $\mathfrak{D}$-pullback random absorbing set associated with non-autonomous random dynamical system $\Phi$.	The following assumption on the external forcing term $\f$ is needed to prove the results of this section. 
	
	\begin{hypothesis}\label{Hyp-f-B}
		For the external forcing term $\f\in\mathrm{L}^{2}_{\emph{loc}}(\R;\H^{-1}(\O))$, there exists a number $\delta\in[0,\frac{\sigma^2}{2})$ such that for every $c>0$,
		\begin{align}\label{forcing2-B}
			\lim_{s\to-\infty}e^{cs}\int_{-\infty}^{0} e^{\delta\zeta}\|\f(\cdot,\zeta+s)\|^2_{\H^{-1}}\d \zeta=0.
		\end{align}
	\end{hypothesis}
	A direct consequence of  Hypothesis \ref{Hyp-f-B} is as follows:
	\begin{proposition}[Proposition 4.2, \cite{Kinra+Mohan_2023_DIE}]\label{Hypo_Conseq-B}
		Assume that Hypothesis \ref{Hyp-f-B} holds. Then
		\begin{align}\label{forcing1-B}
			\int_{-\infty}^{\mathfrak{r}} e^{\delta\zeta}\|\f(\cdot,\zeta)\|^2_{\H^{-1}}\d \zeta<\infty, \ \ \text{ for all } \mathfrak{r}\in\R,
		\end{align}
		where $\delta$ is the same as in \eqref{forcing2-B}.
	\end{proposition}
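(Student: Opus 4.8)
The plan is to extract the finiteness of the weighted tail integral directly from Hypothesis~\ref{Hyp-f-B} via a change of variables, and then to absorb the remaining piece using only the local integrability $\f\in\mathrm{L}^{2}_{\mathrm{loc}}(\R;\H^{-1}(\O))$.

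First I would fix one convenient value of $c>0$ in \eqref{forcing2-B} (for instance $c=1$; we do not need the hypothesis for all $c$ simultaneously). Since the limit in \eqref{forcing2-B} equals zero, there exist $s_0<0$ and a constant $M>0$ such that
\begin{align*}
	e^{cs}\int_{-\infty}^{0} e^{\delta\zeta}\|\f(\cdot,\zeta+s)\|^2_{\H^{-1}}\d\zeta \leq M, \qquad \text{for all } s\leq s_0.
\end{align*}
Performing the substitution $\eta=\zeta+s$ in the inner integral yields
\begin{align*}
	\int_{-\infty}^{0} e^{\delta\zeta}\|\f(\cdot,\zeta+s)\|^2_{\H^{-1}}\d\zeta = e^{-\delta s}\int_{-\infty}^{s} e^{\delta\eta}\|\f(\cdot,\eta)\|^2_{\H^{-1}}\d\eta,
\end{align*}
so that, combining the last two displays, for every $s\leq s_0$,
\begin{align*}
	\int_{-\infty}^{s} e^{\delta\eta}\|\f(\cdot,\eta)\|^2_{\H^{-1}}\d\eta \leq M\, e^{(\delta-c)s} < \infty.
\end{align*}
In particular, taking $s=s_0$ shows that the tail $\int_{-\infty}^{s_0} e^{\delta\eta}\|\f(\cdot,\eta)\|^2_{\H^{-1}}\d\eta$ is finite, with no constraint needed on the sign of $\delta-c$.

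Finally, for an arbitrary $\mathfrak{r}\in\R$: if $\mathfrak{r}\leq s_0$ the conclusion is immediate from the previous step since the integrand is nonnegative; if $\mathfrak{r}>s_0$, I would split
\begin{align*}
	\int_{-\infty}^{\mathfrak{r}} e^{\delta\zeta}\|\f(\cdot,\zeta)\|^2_{\H^{-1}}\d\zeta = \int_{-\infty}^{s_0} e^{\delta\zeta}\|\f(\cdot,\zeta)\|^2_{\H^{-1}}\d\zeta + \int_{s_0}^{\mathfrak{r}} e^{\delta\zeta}\|\f(\cdot,\zeta)\|^2_{\H^{-1}}\d\zeta,
\end{align*}
where the first term is finite by the estimate above and the second is finite because $e^{\delta\zeta}$ is bounded on the compact interval $[s_0,\mathfrak{r}]$ while $\f\in\mathrm{L}^{2}_{\mathrm{loc}}(\R;\H^{-1}(\O))$. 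This establishes \eqref{forcing1-B}.

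There is no serious obstacle in this argument; the only points deserving a little care are the bookkeeping in the change of variables $\eta=\zeta+s$ (and the resulting factor $e^{-\delta s}$) and the remark that a single value of $c$ already suffices to produce the bound, so the quantitative strength of Hypothesis~\ref{Hyp-f-B} is not fully used here — it will instead be needed in the subsequent uniform energy estimates.
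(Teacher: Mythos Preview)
Your argument is correct. The paper does not supply its own proof of this proposition; it is simply quoted from \cite{Kinra+Mohan_2023_DIE}, so there is nothing to compare against beyond confirming that your change of variables $\eta=\zeta+s$ and the split into tail plus local piece constitute a complete proof, which they do.
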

	\begin{example}
		Take $\f(\cdot,t)=t^{p}\f_1$, for any $p\geq0$ and $\f_1\in\H^{-1}(\O)$. Note that the conditions \eqref{forcing2-B}-\eqref{forcing1-B} do not need $\f$ to be bounded in $\H^{-1}(\O)$ at $\pm\infty$.
	\end{example}
	
	\begin{remark}
		We note that the assumption imposed on the deterministic forcing $\f$ in Hypothesis~\ref{Hyp-f-B} is sufficient for establishing the existence of a $\mathfrak{D}$-pullback random absorbing set on both bounded and unbounded domains. However, in order to prove the $\mathfrak{D}$-pullback asymptotic compactness on unbounded domains, this assumption needs to be refined. The corresponding strengthened condition is stated in Hypothesis~\ref{Hyp-f-U} below.
	\end{remark}
	
	\begin{lemma}\label{LemmaUe}
		Assume that condition \eqref{third-grade-paremeters-res} and Hypothesis \ref{Hyp-f-B} are satisfied. Then, for every $\mathfrak{r}\in \R$, $\omega\in \Omega$ and $B=\{B(\mathfrak{r},\omega):\mathfrak{r}\in\R,\; \omega\in \Omega\}\in \mathfrak{D}$, there exists $\mathcal{T}=\mathcal{T}(\mathfrak{r},\omega,B)\geq 1$ such that for all $t\geq \mathcal{T}$, $s \in [\mathfrak{r}-1,\mathfrak{r}]$ and $\v_{\mathfrak{r}-t}\in B(\mathfrak{r}-t,\vartheta_{-t}\omega)$, the solution $\v(\cdot)$ of system \eqref{CTGF} with $\omega$ replaced by $\vartheta_{-\mathfrak{r}}\omega$ satisfies
		\begin{align}\label{UE}
			&\|\v(s;\mathfrak{r}-t,\vartheta_{-\mathfrak{r}}\omega,\v_{\mathfrak{r}-t})\|^2_{2} + \frac{\nu\varepsilon_0}{2}\int_{\mathfrak{r}-t}^{s}e^{-\int^{s}_{\zeta}\left(\frac{\sigma^2}{2}-2\sigma y(\vartheta_{\upeta-\mathfrak{r}}\omega)\right)\d \upeta}\|\Arm(\v(\zeta;\mathfrak{r}-t,\vartheta_{-\mathfrak{r}}\omega,\v_{\mathfrak{r}-t}))\|^2_{2}\d\zeta
			\nonumber\\ &	+ \beta\varepsilon_0 \int_{\mathfrak{r}-t}^{s}e^{-\int^{s}_{\zeta}\left(\frac{\sigma^2}{2}-2\sigma y(\vartheta_{\upeta-\mathfrak{r}}\omega)\right)\d \upeta}\z^{-2}(\zeta,\vartheta_{-\mathfrak{r}}\omega)\|\Arm(\v(\zeta;\mathfrak{r}-t,\vartheta_{-\mathfrak{r}}\omega,\v_{\mathfrak{r}-t}))\|^4_{4}\d\zeta
			\nonumber\\&\leq   \frac{4e^{\int_{s}^{\mathfrak{r}}\left(\frac{\sigma^2}{2}-2\sigma y(\vartheta_{\upeta-\mathfrak{r}}\omega)\right)\d \upeta} }{\min\left\{2\nu\varepsilon_0,\sigma^2\right\}} \int_{-\infty}^{0} e^{\frac{\sigma^2}{2}\zeta+2\sigma\int^{0}_{\zeta} y(\vartheta_{\upeta}\omega)\d \upeta} \z^2(\zeta,\omega) \|\f(\cdot,\zeta+\mathfrak{r})\|^2_{\H^{-1}}\d \zeta < + \infty.
		\end{align}
	\end{lemma}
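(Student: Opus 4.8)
The plan is a pathwise energy estimate for system \eqref{CTGF}, followed by a Gronwall argument with a random, time-dependent integrating factor. Write $\v(\cdot)=\v(\cdot;\mathfrak{r}-t,\vartheta_{-\mathfrak{r}}\omega,\v_{\mathfrak{r}-t})$ for the solution supplied by Lemma~\ref{lem-Sol-CTGF}. First I would test \eqref{CTGF} (with $\omega$ replaced by $\vartheta_{-\mathfrak{r}}\omega$) against $\v$ in the $\H$-duality; the regularity in Lemma~\ref{lem-Sol-CTGF} together with the Lions--Magenes lemma (equivalently, a Galerkin approximation) justifies $\langle\tfrac{\d\v}{\d\tau},\v\rangle=\tfrac12\tfrac{\d}{\d\tau}\|\v\|_2^2$. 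Using $b(\v,\v,\v)=0$ from \eqref{b0}, $\langle\A\v,\v\rangle=\|\nabla\v\|_2^2=\tfrac12\|\Arm(\v)\|_2^2$ for divergence-free fields, and $\langle\K(\v),\v\rangle=\tfrac12\|\Arm(\v)\|_4^4$, this yields, at a generic time $\tau$ (with $\v=\v(\tau)$, $\z=\z(\tau,\vartheta_{-\mathfrak{r}}\omega)$, $\f=\f(\cdot,\tau)$),
$$\tfrac12\tfrac{\d}{\d\tau}\|\v\|_2^2+\nu\|\nabla\v\|_2^2+\Big(\tfrac{\sigma^2}{2}-\sigma y(\vartheta_{\tau-\mathfrak{r}}\omega)\Big)\|\v\|_2^2+\tfrac{\beta}{2}\z^{-2}\|\Arm(\v)\|_4^4=\z\langle\f,\v\rangle-\alpha\z^{-1}\langle\J(\v),\v\rangle .$$

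The crux is absorbing the cubic term $\alpha\z^{-1}\langle\J(\v),\v\rangle$ into the two dissipative terms. Taking $\v_2=\boldsymbol{0}$ in \eqref{J1-J2} (so that $\J(\boldsymbol{0})=\boldsymbol{0}$) gives $|\alpha\langle\J(\v),\v\rangle|\le\tfrac{|\alpha|}{2}\|\Arm(\v)\|_3^3$; the interpolation $\|\Arm(\v)\|_3^3\le\|\Arm(\v)\|_2\|\Arm(\v)\|_4^2$ together with Young's inequality with weight $\sqrt{\nu/\beta}$ bounds $\tfrac{|\alpha|}{2}\z^{-1}\|\Arm(\v)\|_2\|\Arm(\v)\|_4^2$ by $\tfrac{(1-\varepsilon_0)\nu}{2}\|\nabla\v\|_2^2+\tfrac{(1-\varepsilon_0)\beta}{2}\z^{-2}\|\Arm(\v)\|_4^4$. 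It is exactly here that the admissibility condition $|\alpha|<\sqrt{2\nu\beta}$, i.e. $1-\varepsilon_0=\sqrt{\alpha^2/(2\beta\nu)}\in(0,1)$, is used: both coefficients lie strictly below the available $\nu$ and $\tfrac{\beta}{2}$, leaving surplus dissipation $\tfrac{(1+\varepsilon_0)\nu}{2}\|\nabla\v\|_2^2$ and $\tfrac{\beta\varepsilon_0}{2}\z^{-2}\|\Arm(\v)\|_4^4$ on the left. For the forcing I would use $|\z\langle\f,\v\rangle|\le\z\|\f\|_{\H^{-1}}\|\v\|_{\V}$ and Young's inequality with a small parameter, absorbing the resulting $\|\v\|_{\V}^2=\|\v\|_2^2+\|\nabla\v\|_2^2$ into the surplus gradient dissipation and into $\tfrac{\sigma^2}{2}\|\v\|_2^2$; rescaling by $2$ and fixing the forcing parameter to $\epsilon=\tfrac12\min\{2\nu\varepsilon_0,\sigma^2\}$ then produces the differential inequality
$$\tfrac{\d}{\d\tau}\|\v\|_2^2+\Big(\tfrac{\sigma^2}{2}-2\sigma y(\vartheta_{\tau-\mathfrak{r}}\omega)\Big)\|\v\|_2^2+\tfrac{\nu\varepsilon_0}{2}\|\Arm(\v)\|_2^2+\beta\varepsilon_0\z^{-2}\|\Arm(\v)\|_4^4\le\tfrac{2}{\min\{2\nu\varepsilon_0,\sigma^2\}}\z^2\|\f\|_{\H^{-1}}^2 .$$

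Next I would multiply by the integrating factor $e^{\int_{\mathfrak{r}-t}^{\tau}(\frac{\sigma^2}{2}-2\sigma y(\vartheta_{\upeta-\mathfrak{r}}\omega))\d\upeta}$, integrate over $\tau\in[\mathfrak{r}-t,s]$ with $s\in[\mathfrak{r}-1,\mathfrak{r}]$, and divide out; this reproduces the left-hand side of \eqref{UE} together with the initial-datum term $e^{-\int_{\mathfrak{r}-t}^{s}(\frac{\sigma^2}{2}-2\sigma y(\vartheta_{\upeta-\mathfrak{r}}\omega))\d\upeta}\|\v_{\mathfrak{r}-t}\|_2^2$ and the forcing integral. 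Substituting $\zeta\mapsto\zeta-\mathfrak{r}$, using $\z(\zeta+\mathfrak{r},\vartheta_{-\mathfrak{r}}\omega)=\z(\zeta,\omega)$, pulling out $e^{\int_{s}^{\mathfrak{r}}(\frac{\sigma^2}{2}-2\sigma y(\vartheta_{\upeta-\mathfrak{r}}\omega))\d\upeta}$, and enlarging the interval of integration to $(-\infty,0)$ (legitimate since the integrand is nonnegative and $s\le\mathfrak{r}$) turns the forcing integral into precisely the expression on the right of \eqref{UE}, with constant $\tfrac{2}{\min\{2\nu\varepsilon_0,\sigma^2\}}$. Its finiteness follows from \eqref{Z3}: for any $\delta'\in(0,\tfrac{\sigma^2}{2}-\delta)$ one has $e^{-2\sigma y(\vartheta_\zeta\omega)+2\sigma\int_\zeta^0 y(\vartheta_\upeta\omega)\d\upeta}\le C(\omega)e^{-\delta'\zeta}$ for $\zeta\le0$, so the integrand is dominated by $C(\omega)e^{\delta\zeta}\|\f(\cdot,\zeta+\mathfrak{r})\|_{\H^{-1}}^2$, which is integrable over $(-\infty,0)$ by Hypothesis~\ref{Hyp-f-B} and Proposition~\ref{Hypo_Conseq-B}. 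Finally, for the initial-datum term, \eqref{Z3} gives $\int_{\mathfrak{r}-t}^{s}y(\vartheta_{\upeta-\mathfrak{r}}\omega)\d\upeta=o(t)$ uniformly in $s\in[\mathfrak{r}-1,\mathfrak{r}]$, so its exponent equals $-\tfrac{\sigma^2}{2}t+o(t)$; combined with $\|\v_{\mathfrak{r}-t}\|_2^2\le\|B(\mathfrak{r}-t,\vartheta_{-t}\omega)\|_{\H}^2$ and the temperedness \eqref{D_1} of $B$ (applied with a rate $c\in(0,\tfrac{\sigma^2}{2})$), this term tends to $0$ as $t\to\infty$. Hence there is $\mathcal{T}=\mathcal{T}(\mathfrak{r},\omega,B)\ge1$ such that for all $t\ge\mathcal{T}$ it is dominated by the forcing contribution, so the two terms together are bounded by twice that contribution, which is the right-hand side of \eqref{UE}.

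The step I expect to require the most care is the coordination of the cubic-term absorption with the forcing estimate: one must split $\|\Arm(\v)\|_3^3$ between the viscous term $\|\Arm(\v)\|_2^2$ and the strongly nonlinear term $\z^{-2}\|\Arm(\v)\|_4^4$ using exactly the admissible range $|\alpha|<\sqrt{2\nu\beta}$, and then choose the Young parameter for the forcing so that the surviving damping coefficient is precisely $\tfrac{\sigma^2}{2}-2\sigma y$ --- the quantity whose integrating factor, after the change of variables, reproduces the stated right-hand side of \eqref{UE}. The stochastic content is comparatively soft: the noise enters only through the pathwise sublinear estimates \eqref{Z3} for the Ornstein--Uhlenbeck process, once to secure integrability of the forcing integral and once to force the decay of the initial-datum contribution.
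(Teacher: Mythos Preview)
Your proposal is correct and follows essentially the same route as the paper: test \eqref{CTGF} against $\v$, absorb the cubic $\J$-term via the interpolation $\|\Arm(\v)\|_3^3\le\|\Arm(\v)\|_2\|\Arm(\v)\|_4^2$ and Young's inequality (using $1-\varepsilon_0=\sqrt{\alpha^2/(2\beta\nu)}$ exactly as the paper does in its \eqref{UE2}), handle the forcing by $\H^{-1}$--$\V$ duality plus Young with the same parameter $\tfrac12\min\{2\nu\varepsilon_0,\sigma^2\}$ as in \eqref{UE3}, arrive at the differential inequality \eqref{UE4}, apply the integrating factor, shift $\zeta\mapsto\zeta-\mathfrak{r}$, and then use \eqref{Z3} together with Proposition~\ref{Hypo_Conseq-B} for the forcing integral and the temperedness \eqref{D_1} for the initial-datum term. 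The only cosmetic difference is that your Young split for the $\J$-term leaves surplus gradient dissipation $\tfrac{(1+\varepsilon_0)\nu}{2}\|\nabla\v\|_2^2$ rather than the paper's $\nu\varepsilon_0\|\nabla\v\|_2^2$, which is harmless since you then discard the excess to match the stated coefficient $\tfrac{\nu\varepsilon_0}{2}\|\Arm(\v)\|_2^2$.
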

	\begin{proof}
		From $\eqref{CTGF}_1$ and \eqref{b0}, we get 
		\begin{align}\label{UE1}
			& \frac{1}{2}\frac{\d}{\d t}\|\v\|_2^2 + \frac{\nu}{2}\|\Arm(\v)\|^2_2 + \left[\frac{\sigma^2}{2} - \sigma y (\vartheta_{t}\omega)\right]\|\v\|^2_2 + \z^{-2}(t,\omega)\frac{\beta}{2} \|\Arm(\v)\|^4_4 
			\nonumber\\ & = - \alpha \z^{-1}(t,\omega) \left< \J(\v),\v \right>  + \z(t,\omega)\left< \f , \v \right>.
		\end{align}
	Using H\"older's and Young's inequalities, we have 
	\begin{align}
		\left|\alpha \z^{-1}(t,\omega) \left< \J(\v),\v \right> \right| & = \left| \frac{\alpha}{2} \z^{-1}(t,\omega)\int_{\O} \Arm(\v(x))\Arm(\v(x)):\Arm(\v(x))\d x \right|
		\nonumber\\ & \leq \frac{|\alpha|}{2} \z^{-1}(t,\omega) \|\Arm(\v)\|^2_4\|\Arm(\v)\|_2
		\nonumber\\ & \leq \frac{\nu(1-\varepsilon_0)}{2}\|\Arm(\v)\|_{2}^2 + \frac{\alpha^2}{4\nu(1-\varepsilon_0)} \z^{-2}(t,\omega) \|\Arm(\v)\|_{4}^4
		\nonumber\\ & = \frac{\nu(1-\varepsilon_0)}{2}\|\Arm(\v)\|_{2}^2 + \frac{\beta(1-\varepsilon_0)}{2} \z^{-2}(t,\omega) \|\Arm(\v)\|_{4}^4, \label{UE2} \\
	|\z(t,\omega)\left< \f , \v \right>|	& \leq \z(t,\omega) \|\f\|_{\H^{-1}}  \|\v\|_{\V} 
	\nonumber\\ & \leq \frac{1}{2} \min\left\{\nu\varepsilon_0,\frac{\sigma^2}{2}\right\} \|\v\|^2_{\V} + \frac{1}{2\min\left\{\nu\varepsilon_0,\frac{\sigma^2}{2}\right\}}\z^2(t,\omega) \|\f\|^2_{\H^{-1}} 
	\nonumber\\ & \leq \frac{\nu\varepsilon_0}{2} \|\nabla\v\|^2_{2} + \frac{\sigma^2}{4} \|\v\|^2_{2} + \frac{1}{\min\left\{2\nu\varepsilon_0,\sigma^2\right\}}\z^2(t,\omega) \|\f\|^2_{\H^{-1}}
	\nonumber\\ & = \frac{\nu\varepsilon_0}{4} \|\Arm(\v)\|^2_{2} + \frac{\sigma^2}{4} \|\v\|^2_{2} + \frac{1}{\min\left\{2\nu\varepsilon_0,\sigma^2\right\}}\z^2(t,\omega) \|\f\|^2_{\H^{-1}},\label{UE3}
	\end{align}
where $\varepsilon_0:=1-\sqrt{\frac{\alpha^2}{2\beta\nu}}\in(0,1)$. A combination of \eqref{UE1}-\eqref{UE3} gives
	\begin{align}\label{UE4}
	& \frac{\d}{\d t}\|\v\|_2^2 + \frac{\nu\varepsilon_0}{2}\|\Arm(\v)\|^2_2 + \left[\frac{\sigma^2}{2} - 2\sigma y (\vartheta_{t}\omega)\right]\|\v\|^2_2 + \beta\varepsilon_0  \z^{-2}(t,\omega)\|\Arm(\v)\|^4_4  \leq \frac{2\z^2(t,\omega)}{\min\left\{2\nu\varepsilon_0,\sigma^2\right\}} \|\f\|^2_{\H^{-1}}.
\end{align}
Applying the variation of constant formula to \eqref{UE4} and replacing $\omega$ by $\vartheta_{-\mathfrak{r}}\omega$ in the above inequality, we obtain
\begin{align}\label{UE5}
	&\|\v(s;\mathfrak{r}-t,\vartheta_{-\mathfrak{r}}\omega,\v_{\mathfrak{r}-t})\|^2_{2} + \frac{\nu\varepsilon_0}{2}\int_{\mathfrak{r}-t}^{s}e^{-\int^{s}_{\zeta}\left(\frac{\sigma^2}{2}-2\sigma y(\vartheta_{\upeta-\mathfrak{r}}\omega)\right)\d \upeta}\|\Arm(\v(\zeta;\mathfrak{r}-t,\vartheta_{-\mathfrak{r}}\omega,\v_{\mathfrak{r}-t}))\|^2_{2}\d\zeta
\nonumber\\ &	+ \beta\varepsilon_0\int_{\mathfrak{r}-t}^{s}e^{-\int^{s}_{\zeta}\left(\frac{\sigma^2}{2}-2\sigma y(\vartheta_{\upeta-\mathfrak{r}}\omega)\right)\d \upeta}\z^{-2}(\zeta,\vartheta_{-\mathfrak{r}}\omega)\|\Arm(\v(\zeta;\mathfrak{r}-t,\vartheta_{-\mathfrak{r}}\omega,\v_{\mathfrak{r}-t}))\|^4_{4}\d\zeta
	\nonumber\\&\leq e^{-\int^{s}_{\mathfrak{r}-t}\left(\frac{\sigma^2}{2}-2\sigma y(\vartheta_{\upeta-\mathfrak{r}}\omega)\right)\d \upeta}\|\v_{\mathfrak{r}-t}\|^2_{2}
	\nonumber\\ & \quad + \frac{2}{\min\left\{2\nu\varepsilon_0,\sigma^2\right\}}\int_{\mathfrak{r}-t}^{s} e^{-\int^{s}_{\zeta}\left(\frac{\sigma^2}{2}-2\sigma y(\vartheta_{\upeta-\mathfrak{r}}\omega)\right)\d \upeta} \z^2(\zeta,\vartheta_{-\mathfrak{r}}\omega) \|\f(\cdot,\zeta)\|^2_{\H^{-1}}\d \zeta
	\nonumber\\&\leq e^{\int_{s}^{\mathfrak{r}}\left(\frac{\sigma^2}{2}-2\sigma y(\vartheta_{\upeta-\mathfrak{r}}\omega)\right)\d \upeta}\bigg[ e^{ -\frac{\sigma^2}{2}t+2\sigma\int^{0}_{-t} y(\vartheta_{\upeta}\omega)\d \upeta}\|\v_{\mathfrak{r}-t}\|^2_{2}
	\nonumber\\ & \quad + \frac{2}{\min\left\{2\nu\varepsilon_0,\sigma^2\right\}} \int_{-\infty}^{0} e^{\frac{\sigma^2}{2}\zeta+2\sigma\int^{0}_{\zeta} y(\vartheta_{\upeta}\omega)\d \upeta} \z^2(\zeta,\omega) \|\f(\cdot,\zeta+\mathfrak{r})\|^2_{\H^{-1}}\d \zeta\bigg].
\end{align}
First, we show that the final term of \eqref{UE5} is finite. By \eqref{Z3}, we have that there exist $R_1, R_2<0$ such that for all $\zeta\leq R_1$,
\begin{align*}
	-2\sigma y(\vartheta_{\zeta}\omega)\leq-\frac12\left(\frac{\sigma^2}{2}- \delta\right)\zeta,
\end{align*}
and for all $\zeta\leq R_2$,
\begin{align*}
	2\sigma\int^{0}_{\zeta} y(\vartheta_{\upeta}\omega)\d \upeta\leq - \frac12 \left(\frac{\sigma^2}{2}- \delta\right)\zeta,
\end{align*}
where $\delta$ is the positive constant appearing in \eqref{forcing1-B}. Therefore, for all $\zeta\leq R_1$
\begin{align*}
	[\z(\zeta,{\omega})]^2=e^{-2\sigma y(\vartheta_{\zeta}\omega)}\leq e^{-\frac12\left(\frac{\sigma^2}{2}- \delta \right)\zeta},
\end{align*}
and we have for all $\zeta\leq R=:\min\{R_1,R_2\}$,
\begin{align*}
	&e^{\frac{\sigma^2}{2}\zeta+2\sigma\int^{0}_{\zeta} y(\vartheta_{\upeta}\omega)\d \upeta} \z^2(\zeta,\omega) \|\f(\cdot,\zeta+\mathfrak{r})\|^2_{\H^{-1}}\leq e^{\delta\zeta} \|\f(\cdot,\zeta+\mathfrak{r})\|^2_{\H^{-1}}.
\end{align*}
Therefore, it follows from \eqref{forcing1-B} that for every $\mathfrak{r}\in\R$ and $\omega\in\Omega$,
\begin{align}\label{UE7}
	& \int_{-\infty}^{0} e^{\frac{\sigma^2}{2}\zeta+2\sigma\int^{0}_{\zeta} y(\vartheta_{\upeta}\omega)\d \upeta} \z^2(\zeta,\omega) \|\f(\cdot,\zeta+\mathfrak{r})\|^2_{\H^{-1}}\d \zeta
	\nonumber\\ 
	& =
	\int_{R}^{0} e^{\frac{\sigma^2}{2}\zeta+2\sigma\int^{0}_{\zeta} y(\vartheta_{\upeta}\omega)\d \upeta} \z^2(\zeta,\omega) \|\f(\cdot,\zeta+\mathfrak{r})\|^2_{\H^{-1}}\d \zeta 
	\nonumber\\ & \quad + \int_{-\infty}^{R} e^{\frac{\sigma^2}{2}\zeta+2\sigma\int^{0}_{\zeta} y(\vartheta_{\upeta}\omega)\d \upeta} \z^2(\zeta,\omega) \|\f(\cdot,\zeta+\mathfrak{r})\|^2_{\H^{-1}}\d \zeta
	\nonumber\\ 
	& \leq \sup_{\zeta\in [R,0]}\left[e^{\frac{\sigma^2}{2}\zeta+2\sigma\int^{0}_{\zeta} y(\vartheta_{\upeta}\omega)\d \upeta} \z^2(\zeta,\omega)\right]
	\int_{R}^{0}  \|\f(\cdot,\zeta+\mathfrak{r})\|^2_{\H^{-1}}\d \zeta 
	\nonumber\\ & \quad + \int_{-\infty}^{R} e^{\delta\zeta} \z^2(\zeta,\omega) \|\f(\cdot,\zeta+\mathfrak{r})\|^2_{\H^{-1}}\d \zeta<\infty.
\end{align}

Since $\v_{\mathfrak{r}-t}\in B(\mathfrak{r}-t,\vartheta_{-t}\omega)$, we have from \eqref{Z3} that for sufficiently large $t>0$
\begin{align*}
	e^{-\frac{\sigma^2}{2}t+2\sigma\int^{0}_{-t} y(\vartheta_{\upeta}\omega)\d \upeta}\|\v_{\mathfrak{r}-t}\|^2_{2}\leq e^{-\frac{\sigma^2}{4} t}\|B(\mathfrak{r}-t,\vartheta_{-t}\omega)\|^2_{2}\to 0\ \text{ as } \ t \to \infty.
\end{align*}

Therefore, there exists $\mathcal{T}=\mathcal{T}(\mathfrak{r},\omega,B)>0$ such that 
\begin{align}\label{UE6}
	e^{-\frac{\sigma^2}{2}t+2\sigma\int^{0}_{-t} y(\vartheta_{\upeta}\omega)\d \upeta}\|\v_{\mathfrak{r}-t}\|^2_{2}\leq \frac{2}{\min\left\{2\nu\varepsilon_0,\sigma^2\right\}} \int_{-\infty}^{0} e^{\frac{\sigma^2}{2}\zeta+2\sigma\int^{0}_{\zeta} y(\vartheta_{\upeta}\omega)\d \upeta} \z^2(\zeta,\omega) \|\f(\cdot,\zeta+\mathfrak{r})\|^2_{\H^{-1}}\d \zeta,
\end{align}
for all  $t\geq \mathcal{T}$.  Hence, from \eqref{UE5}-\eqref{UE6}, \eqref{UE} follows. The proof is completed.
	\end{proof}
	Based on the previous lemma, we prove the existence of a $\mathfrak{D}$-pullback random absorbing set in the following result. 
	
	\begin{lemma}\label{PAS}
		Suppose that condition \eqref{third-grade-paremeters-res} and Hypothesis \ref{Hyp-f-B} are satisfied. Then the non-autonomous random dynamical system $\Phi$ associated with the system \eqref{STGF} possesses a $\mathfrak{D}$-pullback random absorbing set $\mathcal{M}=\{\mathcal{M}(\mathfrak{r},\omega):\mathfrak{r}\in\R,\omega\in\Omega\}\in\mathfrak{D}$, where  $\mathcal{M}(\mathfrak{r},\omega)$ is defined by 
		\begin{align}\label{PAB1}
			\mathcal{M}(\mathfrak{r},\omega)=\{\u\in\H:\|\u\|^2_{\H}\leq\mathcal{K}(\mathfrak{r},\omega)\},
		\end{align}
		and $\mathcal{K}(\mathfrak{r},\omega)$ is given by
		\begin{align*}
			\mathcal{K}(\mathfrak{r},\omega)=  \frac{4e^{2y(\omega)} }{\min\left\{2\nu\varepsilon_0,\sigma^2\right\}} \int_{-\infty}^{0} e^{\frac{\sigma^2}{2}\zeta+2\sigma\int^{0}_{\zeta} y(\vartheta_{\upeta}\omega)\d \upeta} \z^2(\zeta,\omega) \|\f(\cdot,\zeta+\mathfrak{r})\|^2_{\H^{-1}}\d \zeta.
		\end{align*}
	\end{lemma}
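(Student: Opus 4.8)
The plan is to transport the uniform estimate of Lemma~\ref{LemmaUe} (evaluated at $s=\mathfrak{r}$) back to $\Phi$ via the Doss--Sussman relations \eqref{COV}--\eqref{Phi}, thereby obtaining the absorbing bound, and then to check that the resulting family of balls lies in $\mathfrak{D}$. \emph{Step~1 (absorbing property).} I would fix $\mathfrak{r}\in\R$, $\omega\in\Omega$, $B\in\mathfrak{D}$ and $\u_{\mathfrak{r}-t}\in B(\mathfrak{r}-t,\vartheta_{-t}\omega)$, and set $\v_{\mathfrak{r}-t}=\z(\mathfrak{r}-t,\vartheta_{-\mathfrak{r}}\omega)\u_{\mathfrak{r}-t}$. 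By \eqref{COV} and \eqref{Phi},
\begin{align*}
\Phi(t,\mathfrak{r}-t,\vartheta_{-t}\omega,\u_{\mathfrak{r}-t})=\z^{-1}(\mathfrak{r},\vartheta_{-\mathfrak{r}}\omega)\,\v(\mathfrak{r};\mathfrak{r}-t,\vartheta_{-\mathfrak{r}}\omega,\v_{\mathfrak{r}-t}),
\end{align*}
with $\z^{-1}(\mathfrak{r},\vartheta_{-\mathfrak{r}}\omega)=e^{\sigma y(\omega)}$ and $\z(\mathfrak{r}-t,\vartheta_{-\mathfrak{r}}\omega)=e^{-\sigma y(\vartheta_{-t}\omega)}$. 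Since $t\mapsto e^{-\sigma y(\vartheta_{-t}\omega)}$ is tempered by \eqref{Z3} and $B\in\mathfrak{D}$, the datum $\v_{\mathfrak{r}-t}$ belongs to a family still in $\mathfrak{D}$, so Lemma~\ref{LemmaUe} applies to it; evaluated at $s=\mathfrak{r}$ (the factor $e^{\int_s^{\mathfrak{r}}(\cdots)\d\upeta}$ in \eqref{UE} then equals $1$) it yields $\mathcal{T}=\mathcal{T}(\mathfrak{r},\omega,B)\geq1$ with
\begin{align*}
\|\Phi(t,\mathfrak{r}-t,\vartheta_{-t}\omega,\u_{\mathfrak{r}-t})\|^2_{\H}=\z^{-2}(\mathfrak{r},\vartheta_{-\mathfrak{r}}\omega)\,\|\v(\mathfrak{r};\mathfrak{r}-t,\vartheta_{-\mathfrak{r}}\omega,\v_{\mathfrak{r}-t})\|^2_{2}\leq\mathcal{K}(\mathfrak{r},\omega)\qquad(t\geq\mathcal{T}),
\end{align*}
the factor $\z^{-2}(\mathfrak{r},\vartheta_{-\mathfrak{r}}\omega)=e^{2\sigma y(\omega)}$ accounting for the prefactor of $\mathcal{K}(\mathfrak{r},\omega)$. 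The integral defining $\mathcal{K}(\mathfrak{r},\omega)$ is finite by \eqref{UE7}, and $\mathcal{K}(\mathfrak{r},\cdot)$ is $\mathscr{F}$-measurable, being an integral of measurable functions of $\omega$; hence $\mathcal{M}$ is a well-defined measurable family that $\mathfrak{D}$-pullback absorbs every $B\in\mathfrak{D}$.

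\emph{Step~2 ($\mathcal{M}\in\mathfrak{D}$).} It remains to verify $\lim_{t\to\infty}e^{-ct}\mathcal{K}(\mathfrak{r}-t,\vartheta_{-t}\omega)=0$ for every $c>0$. Plugging $\mathfrak{r}-t$ and $\vartheta_{-t}\omega$ into the definition of $\mathcal{K}$, and using $\vartheta_{\upeta}\vartheta_{-t}\omega=\vartheta_{\upeta-t}\omega$ and $\z(\zeta,\vartheta_{-t}\omega)=e^{-\sigma y(\vartheta_{\zeta-t}\omega)}$, gives
\begin{align*}
\mathcal{K}(\mathfrak{r}-t,\vartheta_{-t}\omega)=\frac{4}{\min\{2\nu\varepsilon_0,\sigma^2\}}\int_{-\infty}^{0}e^{\frac{\sigma^2}{2}\zeta}\,\Theta(\zeta,t)\,\|\f(\cdot,\zeta+\mathfrak{r}-t)\|^2_{\H^{-1}}\d\zeta,
\end{align*}
with $\Theta(\zeta,t):=e^{2\sigma y(\vartheta_{-t}\omega)+2\sigma\int_{\zeta}^{0}y(\vartheta_{\upeta-t}\omega)\d\upeta-2\sigma y(\vartheta_{\zeta-t}\omega)}$. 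Given $c>0$, I would fix $\varepsilon$ with $0<\varepsilon<\min\{c/4,\tfrac12(\tfrac{\sigma^2}{2}-\delta)\}$ (possible since $\delta<\sigma^2/2$). Using the three limits in \eqref{Z3} for $\vartheta_{-t}\omega$, $\vartheta_{\zeta-t}\omega$ and the ergodic averages (after the change of variable $\upeta\mapsto\upeta-t$ and the splitting $\int_{\zeta-t}^{-t}=\int_{\zeta-t}^{0}-\int_{-t}^{0}$), together with $|\zeta-t|\geq t$ for $\zeta\leq0$, one obtains $T_0=T_0(\mathfrak{r},\omega,\varepsilon)$ with $\Theta(\zeta,t)\leq e^{4\varepsilon t}e^{2\varepsilon|\zeta|}$ for $t\geq T_0$ and $\zeta\leq0$. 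Since $|\zeta|=-\zeta$ and $\tfrac{\sigma^2}{2}-2\varepsilon\geq\delta$, one has $e^{\frac{\sigma^2}{2}\zeta}e^{2\varepsilon|\zeta|}=e^{(\frac{\sigma^2}{2}-2\varepsilon)\zeta}\leq e^{\delta\zeta}$ for $\zeta\leq0$, whence for $t\geq T_0$
\begin{align*}
e^{-ct}\,\mathcal{K}(\mathfrak{r}-t,\vartheta_{-t}\omega)\leq\frac{4\,e^{(4\varepsilon-c)t}}{\min\{2\nu\varepsilon_0,\sigma^2\}}\int_{-\infty}^{0}e^{\delta\zeta}\,\|\f(\cdot,\zeta+\mathfrak{r}-t)\|^2_{\H^{-1}}\d\zeta.
\end{align*}
Setting $s:=\mathfrak{r}-t$ and writing $e^{(4\varepsilon-c)t}=e^{(4\varepsilon-c)\mathfrak{r}}e^{(c-4\varepsilon)s}$, the right side is a constant multiple of $e^{(c-4\varepsilon)s}\int_{-\infty}^{0}e^{\delta\zeta}\|\f(\cdot,\zeta+s)\|^2_{\H^{-1}}\d\zeta$, which tends to $0$ as $s\to-\infty$ by Hypothesis~\ref{Hyp-f-B} (applied with the positive constant $c-4\varepsilon$). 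Hence $\mathcal{M}\in\mathfrak{D}$, and combined with Step~1 this finishes the proof.

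\emph{Main obstacle.} The delicate part is Step~2, where the limit must hold for \emph{every} $c>0$: a crude estimate that pulls a factor $e^{\sigma^2 t/2}$ out of the variation-of-constants formula only delivers the conclusion for $c>\tfrac{\sigma^2}{2}-\delta$. The point is to keep the forcing contribution exactly in the weighted form $\int_{-\infty}^{0}e^{\delta\zeta}\|\f(\cdot,\zeta+\mathfrak{r}-t)\|^2_{\H^{-1}}\d\zeta$, so that Hypothesis~\ref{Hyp-f-B} may be invoked with an arbitrarily small exponent, while every Ornstein--Uhlenbeck contribution is absorbed into $e^{C\varepsilon t}$ with $\varepsilon$ small relative both to $c$ (to keep $e^{(C\varepsilon-c)t}$ decaying) and to $\tfrac{\sigma^2}{2}-\delta$ (so the residual $\zeta$-growth $e^{C\varepsilon|\zeta|}$ is dominated by $e^{(\frac{\sigma^2}{2}-\delta)|\zeta|}$ and the $e^{\delta\zeta}$-weight reappears).
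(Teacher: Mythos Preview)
Your proof is correct and follows essentially the same strategy as the paper. In Step~1 you correctly transport the bound \eqref{UE} at $s=\mathfrak{r}$ through the Doss--Sussman relation \eqref{Phi}, noting that the tempered factor $e^{-\sigma y(\vartheta_{-t}\omega)}$ keeps $\{\v_{\mathfrak{r}-t}\}$ in a $\mathfrak{D}$-family so that Lemma~\ref{LemmaUe} applies; this matches the paper's one-line appeal to that lemma (note the paper's prefactor $e^{2y(\omega)}$ is a typo for $e^{2\sigma y(\omega)}$, which your computation gives). For Step~2 the paper makes the substitution $\zeta\mapsto\zeta-t$, so the forcing becomes $\|\f(\cdot,\zeta+\mathfrak{r})\|^2_{\H^{-1}}$ on $(-\infty,-t]$, and then concludes via \eqref{Z3} and \eqref{forcing1-B}; you instead keep the integral over $(-\infty,0]$, bound the Ornstein--Uhlenbeck factor $\Theta(\zeta,t)$ uniformly in $\zeta\leq0$ using \eqref{Z3}, and invoke \eqref{forcing2-B} directly with the exponent $c-4\varepsilon$. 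Both routes hinge on the same sublinear growth \eqref{Z3} and Hypothesis~\ref{Hyp-f-B}, and your observation that the uniformity in $\zeta\leq0$ follows from $|\zeta-t|\geq t$ is the correct justification that the paper leaves implicit.
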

	\begin{proof}
		We infer from \eqref{UE7} that
		\begin{align}\label{IRAS2-N}
			\mathcal{K}(\mathfrak{r},\omega)&= \frac{4e^{2y(\omega)} }{\min\left\{2\nu\varepsilon_0,\sigma^2\right\}} \int_{-\infty}^{0} e^{\frac{\sigma^2}{2}\zeta+2\sigma\int^{0}_{\zeta} y(\vartheta_{\upeta}\omega)\d \upeta} \z^2(\zeta,\omega) \|\f(\cdot,\zeta+\mathfrak{r})\|^2_{\H^{-1}}\d \zeta < +\infty.
		\end{align}
		Hence, the absorption follows from Lemma \ref{LemmaUe}. For $c>0$, let $c_1=\min\{\frac{c}{2},\frac{\sigma^2}{4},\delta\}$ and consider
		\begin{align}\label{IRAS3-N}
			&\lim_{t\to+\infty}e^{-ct}\|\mathcal{M}(\mathfrak{r}-t,\vartheta_{-t}\omega)\|^2_{\H}\nonumber\\&\leq \lim_{t\to+\infty}e^{-ct}\left[\frac{4e^{2y(\vartheta_{-t}\omega)}}{\min\left\{2\nu\varepsilon_0,\sigma^2\right\}}\int_{-\infty}^{0} e^{\frac{\sigma^2}{2}\zeta+2\sigma\int^{0}_{\zeta} y(\vartheta_{\upeta-t}\omega)\d \upeta} \z^2(\zeta-t,\omega) \|\f(\cdot,\zeta+\mathfrak{r}-t)\|^2_{\H^{-1}}\d \zeta\right]\nonumber\\&=\lim_{t\to+\infty}e^{-ct}\left[\frac{4e^{2y(\vartheta_{-t}\omega)+\int_{0}^{-t}y(\vartheta_{\upeta}\omega)\d\upeta}}{\min\left\{2\nu\varepsilon_0,\sigma^2\right\}} \int_{-\infty}^{-t} e^{\frac{\sigma^2}{2}(\zeta+t)-2\sigma\int^{0}_{\zeta} y(\vartheta_{\upeta-t}\omega)\d \upeta} \z^2(\zeta,\omega) \|\f(\cdot,\zeta+\mathfrak{r})\|^2_{\H^{-1}}\d \zeta\right] \nonumber\\&\leq\lim_{t\to+\infty}e^{-\left(c-\frac{c_1}{2}\right)t}\left[\frac{4}{\min\left\{2\nu\varepsilon_0,\sigma^2\right\}}\int_{-\infty}^{-t} e^{\delta \zeta} \|\f(\cdot,\zeta+\mathfrak{r})\|^2_{\H^{-1}}\d \zeta\right] =0,
		\end{align}
		where we have used \eqref{Z3} and \eqref{forcing2-B}. It follows from \eqref{IRAS3-N} that $\mathcal{M}\in{\mathfrak{D}}$. This completes the proof.
	\end{proof}

	\section{Pullback random attractors associated with $\Phi$: Bounded domains}\label{Sec4}\setcounter{equation}{0}
	
	In this section, we establish first main result of this article, that is, the existence of $\mathfrak{D}$-pullback random attractor associated with $\Phi$ on bounded domains. We have already obtain the existence of  $\mathfrak{D}$-pullback random absorbing set associated with $\Phi$ in Section \ref{Sec3}. In order to apply the abstract result for the existence of $\mathfrak{D}$-pullback random attractor from \cite{SandN_Wang}, we are only remain to show the $\mathfrak{D}$-pullback asymptotic compactness for $\Phi$. We prove $\mathfrak{D}$-pullback asymptotic compactness of $\Phi$ using compact Sobolev embedding $\V\subset\H$ on bounded domains.
	
	\begin{lemma}\label{PCB}
		Let  the condition \eqref{third-grade-paremeters-res} and Hypothesis \ref{Hyp-f-B} be satisfied. Then for every $ \omega\in\Omega$, $\mathfrak{r}\in\R$ and $t>\mathfrak{r}$, the solution $\v(t;\mathfrak{r},\omega,\cdot):\H\to\H$ is compact, that is, for every bounded set $\mathcal{H}$ in $\H$, the image $\v(t;\mathfrak{r},\omega,B)$ is precompact in $\H$.
	\end{lemma}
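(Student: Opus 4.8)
The plan is to obtain the compactness of the map $\v(t;\mathfrak r,\omega,\cdot)\colon\H\to\H$ from the compact embedding $\V\hookrightarrow\H$, which holds because $\O$ is bounded. The point is that one should \emph{not} try to prove a parabolic smoothing estimate placing $\v(t;\mathfrak r,\omega,\v_{\mathfrak r})$ directly into a bounded subset of $\V$: the principal dissipative nonlinearity $\beta\,\mathrm{div}\big(|\Arm(\v)|^{2}\Arm(\v)\big)$ only yields $\mathbb W^{1,4}$-type control and the forcing lies merely in $\H^{-1}(\O)$, so a pointwise-in-time gain of regularity would be delicate. Instead, I would exploit that the solution already belongs to $\mathrm L^{2}_{\mathrm{loc}}(\mathfrak r,\infty;\V)$ to locate an \emph{intermediate} time $t_{*}\in(\mathfrak r,t)$ at which, along a suitable subsequence, the solutions are bounded in $\V$, and then transport this compactness forward to time $t$ using the continuous dependence on initial data from Lemma~\ref{Continuity}.

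Concretely, let $B\subset\H$ be bounded, take $\v_{\mathfrak r,n}\in B$ and write $\v_{n}(\cdot):=\v(\cdot;\mathfrak r,\omega,\v_{\mathfrak r,n})$. Running the energy estimate behind Lemma~\ref{LemmaUe} \emph{forward} on the compact interval $[\mathfrak r,t]$ — i.e.\ integrating \eqref{UE4} via the variation-of-constants formula, using the boundedness of $s\mapsto y(\vartheta_{s}\omega)$ on $[\mathfrak r,t]$ and Korn's inequality $\|\nabla\v\|_{2}\le C\|\Arm(\v)\|_{2}$ — produces constants $C_{1},C_{2}$ depending only on $\mathfrak r$, $t$, $\omega$ and $\sup_{\x\in B}\|\x\|_{\H}$ such that $\sup_{n}\sup_{s\in[\mathfrak r,t]}\|\v_{n}(s)\|^{2}_{2}\le C_{1}$ and $\sup_{n}\int_{\mathfrak r}^{t}\|\v_{n}(s)\|^{2}_{\V}\,\d s\le C_{2}$. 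By Fatou's lemma, $\int_{\mathfrak r}^{t}\liminf_{n\to\infty}\|\v_{n}(s)\|^{2}_{\V}\,\d s\le C_{2}<\infty$, so there is $t_{*}\in(\mathfrak r,t)$ with $\liminf_{n\to\infty}\|\v_{n}(t_{*})\|_{\V}<\infty$; passing to a subsequence (not relabelled), $\sup_{n}\|\v_{n}(t_{*})\|_{\V}<\infty$. Since $\V\hookrightarrow\H$ is compact, a further subsequence satisfies $\v_{n}(t_{*})\to\boldsymbol{\zeta}$ strongly in $\H$ for some $\boldsymbol{\zeta}\in\H$.

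To finish, I would use that, by the uniqueness statement in Lemma~\ref{lem-Sol-CTGF}, the restriction of $\v_{n}$ to $[t_{*},\infty)$ is exactly the weak solution of \eqref{CTGF} on $[t_{*},\infty)$ with datum $\v_{n}(t_{*})$ at time $t_{*}$ and the same $\omega$, so that $\v_{n}(t)=\v(t;t_{*},\omega,\v_{n}(t_{*}))$. Applying Lemma~\ref{Continuity} on $[t_{*},t]$ with the fixed reference solution $\v(\cdot;t_{*},\omega,\boldsymbol{\zeta})$, whose $\mathrm L^{2}(t_{*},t;\mathbb W^{1,4}_{0}(\O))$-norm controls the Gronwall exponent uniformly in $n$, yields $\|\v_{n}(t)-\v(t;t_{*},\omega,\boldsymbol{\zeta})\|_{2}^{2}\le \|\v_{n}(t_{*})-\boldsymbol{\zeta}\|_{2}^{2}\,e^{C}\to 0$ as $n\to\infty$. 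Hence every sequence drawn from $\v(t;\mathfrak r,\omega,B)$ has an $\H$-convergent subsequence, i.e.\ $\v(t;\mathfrak r,\omega,B)$ is precompact in $\H$, which is the claim.

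The points that need a little care are the uniformity over $B$ of the forward energy bounds (which holds since the datum enters the estimate only through $\sup_{B}\|\cdot\|_{\H}$) and the routine verification that the restriction of a weak solution of \eqref{CTGF} is again a weak solution with the shifted initial time. The main conceptual feature — which lets the argument avoid both the failure of weak continuity and the mere $\H^{-1}$-integrability of $\f$ — is that it uses only the time-integrated bound $\v\in\mathrm L^{2}_{\mathrm{loc}}(\mathfrak r,\infty;\V)$; no pointwise-in-time smoothing is needed, and indeed attempting one would run into the interaction of the cubic term $\K$ with $\A\v$, for which the energy-level regularity $\mathbb W^{1,4}$ does not suffice.
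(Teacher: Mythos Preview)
Your argument is correct, and it takes a genuinely different route from the paper's. The paper bounds the time derivatives $\frac{\d\v_{n}}{\d t}$ in $\mathrm{L}^{2}(\mathfrak r,\mathfrak r+T;\V')+\mathrm{L}^{4/3}(\mathfrak r,\mathfrak r+T;\mathbb{W}^{-1,4/3}(\O))$ and then applies the Aubin--Lions lemma to obtain strong convergence of a subsequence in $\mathrm{L}^{2}(\mathfrak r,\mathfrak r+T;\H)$, hence a.e.\ convergence in $\H$ at some intermediate time $s\in(\mathfrak r,t)$, and finishes exactly as you do via Lemma~\ref{Continuity}. Your Fatou trick on the uniform $\mathrm{L}^{2}(\mathfrak r,t;\V)$-bound is more elementary: it bypasses the time-derivative estimates and the Aubin--Lions machinery altogether, and delivers directly a time $t_{*}$ at which a subsequence is bounded in $\V$, so that the compact embedding $\V\hookrightarrow\H$ applies pointwise. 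The cost is that you must pass to a subsequence \emph{before} choosing the intermediate time (since only $\liminf_{n}\|\v_{n}(t_{*})\|_{\V}<\infty$ is guaranteed), whereas Aubin--Lions gives a single subsequence converging at a.e.\ time; for the present statement this makes no difference. One small remark: when you invoke Lemma~\ref{Continuity}, make sure the reference solution $\widetilde{\v}$ in that lemma is taken to be the fixed limit $\v(\cdot;t_{*},\omega,\boldsymbol{\zeta})$, since the Gronwall exponent there involves $\int_{t_{*}}^{t}\z^{-2}(s,\omega)\|\Arm(\widetilde{\v}(s))\|_{4}^{2}\,\d s$ (finite by $\widetilde{\v}\in\mathrm{L}^{4}(t_{*},t;\mathbb{W}^{1,4}_{0}(\O))$ and H\"older), not the $\mathrm{L}^{2}$-in-time $\mathbb{W}^{1,4}$-norm as you wrote --- a harmless slip.
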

	\begin{proof}
		Let $\v(s;\mathfrak{r},\omega,\cdot)$ be the unique solution of \eqref{CTGF} for $s\in[\mathfrak{r},\mathfrak{r}+T]$, where $T>0$. Assume that the sequence $\{\v^n_{0}\}_{n\in\N}\subset \H$. In view of Lemma \ref{LemmaUe}, we infer that 
		\begin{equation}\label{PCB1}
				\{\v(\cdot;\mathfrak{r},\omega,\v^n_{0})\}_{n\in\N}  \text{ is bounded in } \mathrm{L}^{\infty}(\mathfrak{r},\mathfrak{r}+T;\H)\cap\mathrm{L}^2(\mathfrak{r}, \mathfrak{r}+T;\V)\cap\mathrm{L}^{4}(\mathfrak{r},\mathfrak{r}+T;\mathbb{W}^{1,4}(\O)).
		\end{equation}
		We also have
		\begin{align}\label{PCB3}
			\{\A(\v(\cdot;\mathfrak{r},\omega,\v^n_{0}))\}_{n\in\N} \text{ and }   \{\B(\v(\cdot;\mathfrak{r},\omega,\v^n_{0}))\}_{n\in\N} \text{ are bounded in } \mathrm{L}^2(\mathfrak{r},\mathfrak{r}+T;\V'),
		\end{align}
		and
		\begin{align}\label{PCB4}
			\{\J(\v(\cdot;\mathfrak{r},\omega,\v^n_{0}))\}_{n\in\N} \text{ and } \{\K(\v(\cdot;\mathfrak{r},\omega,\v^n_{0}))\}_{n\in\N} \text{ is bounded in } \mathrm{L}^{\frac{4}{3}}(\mathfrak{r},\mathfrak{r}+T;\mathbb{W}^{-1,\frac{4}{3}}(\O)).
		\end{align}
		It follows from \eqref{PCB1}-\eqref{PCB4} and \eqref{CTGF} that
		\begin{align*}
			\left\{\frac{\d}{\d s}(\u(\cdot;\mathfrak{r},\omega,\v^n_{0}))\right\}_{n\in\N} \text{ is bounded in } \mathrm{L}^2(\mathfrak{r},\mathfrak{r}+T;\V')+\mathrm{L}^{\frac{4}{3}}(\mathfrak{r},\mathfrak{r}+T;\mathbb{W}^{-1,\frac{4}{3}}(\O)).
		\end{align*}
		Since $\mathrm{L}^2(\mathfrak{r},\mathfrak{r}+T;\V')+\mathrm{L}^{\frac{4}{3}}(\mathfrak{r},\mathfrak{r}+T;\mathbb{W}^{-1,\frac{4}{3}}(\O))\subset\mathrm{L}^{\frac{4}{3}}(\mathfrak{r},\mathfrak{r}+T;\V'+\mathbb{W}^{-1,\frac{4}{3}}(\O))$, the above sequence is bounded in $\mathrm{L}^{\frac{4}{3}}(\mathfrak{r},\mathfrak{r}+T;\V'+\mathbb{W}^{-1,\frac{4}{3}}(\O))$. Note also that $\V\cap\mathbb{W}^{1,4}(\O)\subset\V\subset\H\subset\V'\subset \V'+\mathbb{W}^{-1,\frac{4}{3}}(\O)$ and the embedding of $\V\subset\H$ is compact on bounded domains. By the \emph{Aubin-Lions compactness lemma}, there exists a subsequence (keeping as it is) and $\widehat{\v}\in\mathrm{L}^2(\mathfrak{r},\mathfrak{r}+T;\H)$ such that 
		\begin{align}\label{PCB6}
			\v(\cdot;\mathfrak{r},\omega,\v^n_{0})\to \widehat{\v}(\cdot) \ \text{ strongly in }\  \mathrm{L}^{2}(\mathfrak{r},\mathfrak{r}+T;\H).
		\end{align}
		Along a further subsequence (again not relabeling), we infer from \eqref{PCB6} that
		\begin{align}\label{PCB7}
			\v(s;\mathfrak{r},\omega,\v^n_{0})\to\widehat{\v}(s) \text{ in }  \H \ \text{ for almost all }\  s\in(\mathfrak{r},\mathfrak{r}+T).
		\end{align}
		Since $\mathfrak{r}<t<\mathfrak{r}+T$, we obtain from \eqref{PCB7} that there exists $s\in(\mathfrak{r},t)$ such that \eqref{PCB7} holds true for this particular $s$. Then  by Lemma \ref{Continuity}, we obtain
		\begin{align*}
			\v(t;\mathfrak{r},\omega,\v^n_{0})=\v(t;s,\omega,\u(s;\mathfrak{r},\omega,\v^n_{0}))\to \v(t;\mathfrak{r},\omega,\widehat{\v}(\mathfrak{r})),
		\end{align*}
		which completes the proof.
	\end{proof}
	In fact, Lemma \ref{PCB} helps us to prove the $\mathfrak{D}$-pullback asymptotic compactness of $\Phi$ in $\H$ on bounded domains.
	\begin{corollary}\label{Asymptotic_B}
		Let  the condition \eqref{third-grade-paremeters-res} and Hypothesis \ref{Hyp-f-B} be satisfied. Then for every $\mathfrak{r}\in \R$, $\omega\in \Omega,$ $B=\{B(\mathfrak{r},\omega):\mathfrak{r}\in \R,\omega\in \Omega\}\in \mathfrak{D}$ and $t_n\to \infty,$ $\v^n_{0}\in B(\mathfrak{r}-t_n, \vartheta_{-t_{n}}\omega)$, the sequence $\v(\mathfrak{r};\mathfrak{r}-t_n,\vartheta_{-\mathfrak{r}}\omega,\v^n_{0})$ of solutions of the system \eqref{CTGF} has a convergent subsequence in $\H$.
	\end{corollary}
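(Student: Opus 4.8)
The plan is to deduce Corollary~\ref{Asymptotic_B} directly from Lemma~\ref{PAS} (the existence of the $\mathfrak{D}$-pullback random absorbing set $\mathcal{M}$) together with the compactness statement of Lemma~\ref{PCB}, via a standard two-step ``absorb, then compact'' argument. First I would use the absorbing property: since $B\in\mathfrak{D}$ and $t_n\to\infty$, Lemma~\ref{PAS} (or equivalently the uniform estimate \eqref{UE} of Lemma~\ref{LemmaUe}) yields a time $\mathcal{T}=\mathcal{T}(\mathfrak{r},\omega,B)$ such that for all $n$ large enough, say $t_n\geq\mathcal{T}+1$, the intermediate state $\v(\mathfrak{r}-1;\mathfrak{r}-t_n,\vartheta_{-\mathfrak{r}}\omega,\v^n_0)$ lies in the bounded (deterministic, once $\mathfrak{r},\omega$ are fixed) set $\mathcal{M}(\mathfrak{r}-1,\vartheta_{-\mathfrak{r}}\omega)$ of $\H$; more precisely $\|\v(\mathfrak{r}-1;\mathfrak{r}-t_n,\vartheta_{-\mathfrak{r}}\omega,\v^n_0)\|_2^2\leq \mathcal{K}(\mathfrak{r}-1,\vartheta_{-\mathfrak{r}}\omega)$.

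Next I would exploit the cocycle (flow) property of the solution operator of \eqref{CTGF}, writing
\begin{align*}
\v(\mathfrak{r};\mathfrak{r}-t_n,\vartheta_{-\mathfrak{r}}\omega,\v^n_0)
=\v\big(\mathfrak{r};\mathfrak{r}-1,\vartheta_{-\mathfrak{r}}\omega,\,\v(\mathfrak{r}-1;\mathfrak{r}-t_n,\vartheta_{-\mathfrak{r}}\omega,\v^n_0)\big).
\end{align*}
Since the sequence of initial data $\boldsymbol{\zeta}_n:=\v(\mathfrak{r}-1;\mathfrak{r}-t_n,\vartheta_{-\mathfrak{r}}\omega,\v^n_0)$ is bounded in $\H$ by the previous step, Lemma~\ref{PCB} (applied with initial time $\mathfrak{r}-1$, evaluated at the later time $\mathfrak{r}>\mathfrak{r}-1$, and with $\omega$ replaced by $\vartheta_{-\mathfrak{r}}\omega$) tells us that the image $\{\v(\mathfrak{r};\mathfrak{r}-1,\vartheta_{-\mathfrak{r}}\omega,\boldsymbol{\zeta}_n)\}_n$ is precompact in $\H$. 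Hence it has a subsequence converging strongly in $\H$, which is exactly the asserted convergent subsequence of $\v(\mathfrak{r};\mathfrak{r}-t_n,\vartheta_{-\mathfrak{r}}\omega,\v^n_0)$.

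The only mild subtlety — and the step I would be most careful about — is bookkeeping with the $\omega$-shifts and the non-autonomous time labels: one must check that the ``$\omega$ replaced by $\vartheta_{-\mathfrak{r}}\omega$'' convention in the definition \eqref{Phi} of $\Phi$ is applied consistently in both the absorbing estimate and in Lemma~\ref{PCB}, and that $\mathcal{M}(\mathfrak{r}-1,\vartheta_{-\mathfrak{r}}\omega)$ is indeed a fixed bounded subset of $\H$ for the purpose of invoking compactness (it is, since $\mathfrak{r}$ and $\omega$ are frozen throughout the argument). There is no genuine analytic obstacle here: all the hard work — the uniform energy estimates and the parabolic-type compactness via Aubin--Lions plus continuity in the initial data — has already been carried out in Lemmas~\ref{LemmaUe}, \ref{PAS}, \ref{Continuity} and \ref{PCB}; the corollary is essentially their formal combination.
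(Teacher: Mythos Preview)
Your proposal is correct and follows essentially the same route as the paper: use the uniform estimate of Lemma~\ref{LemmaUe} at the intermediate time $\mathfrak{r}-1$ to obtain boundedness of $\v(\mathfrak{r}-1;\mathfrak{r}-t_n,\vartheta_{-\mathfrak{r}}\omega,\v^n_0)$ in $\H$, then rewrite $\v(\mathfrak{r};\mathfrak{r}-t_n,\vartheta_{-\mathfrak{r}}\omega,\v^n_0)$ via the flow property and invoke the compactness of Lemma~\ref{PCB}. One cosmetic remark: the absorbing set $\mathcal{M}$ and radius $\mathcal{K}$ in Lemma~\ref{PAS} are stated for $\u=\z^{-1}\v$, not $\v$ itself, so the cleanest citation for the $\v$-bound is directly \eqref{UE} (as you also note).
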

	\begin{proof}
		From Lemma \ref{LemmaUe} with $s=\mathfrak{r}-1$, we have that there exists $\mathcal{T}=\mathcal{T}(\mathfrak{r},\omega,B)\geq 1$ such that for all $t\geq\mathcal{T}$ and $\v_{\mathfrak{r}-t}\in B(\mathfrak{r}-t, \vartheta_{-t}\omega)$,
		\begin{align}\label{AB1}
			\v(\mathfrak{r}-1;\mathfrak{r}-t,\vartheta_{-\mathfrak{r}}\omega,\v_{\mathfrak{r}-t})\in\H.
		\end{align}
		Since $t_n\to\infty$ and $\v^n_{0}\in B(\mathfrak{r}-t_n, \vartheta_{-t_{n}}\omega)$, from \eqref{AB1}, we infer that there exists $N_1=N_1(\mathfrak{r},\omega,B)>0$ such that 
		\begin{align}\label{AB2}
			\{\v(\mathfrak{r}-1;\mathfrak{r}-t_n,\vartheta_{-\mathfrak{r}}\omega,\v^n_{0})\}_{n\geq N_1}\subset\H.
		\end{align}
		Hence, by \eqref{AB2} and Lemma \ref{PCB}, we conclude that the sequence $$\v(\mathfrak{r};\mathfrak{r}-t_n,\vartheta_{-\mathfrak{r}}\omega,\v^n_{0})=\v(\mathfrak{r};\mathfrak{r}-1,\vartheta_{-\mathfrak{r}}\omega,\v(\mathfrak{r}-1;\mathfrak{r}-t_n,\vartheta_{-\mathfrak{r}}\omega,\v^n_{0}))$$ has a convergent subsequence in $\H$, which completes the proof.
	\end{proof}

\begin{theorem}\label{PRA_B}
	Let  the condition \eqref{third-grade-paremeters-res} and Hypothesis \ref{Hyp-f-B} be satisfied.Then, there exists a unique $\mathfrak{D}$-pullback random attractor $\mathscr{A}=\{\mathscr{A}(\mathfrak{r},\omega):\mathfrak{r}\in\R, \omega\in\Omega\}\in\mathfrak{D},$ for the  non-autonomous random dynamical system $\Phi$ associated with the system \eqref{STGF} in $\H$.
\end{theorem}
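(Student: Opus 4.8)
The plan is to invoke the abstract existence theorem for $\mathfrak{D}$-pullback random attractors from \cite{SandN_Wang}, which requires four ingredients: (i) that $\Phi$ is a non-autonomous random dynamical system on $\H$; (ii) that $\Phi$ possesses a closed measurable $\mathfrak{D}$-pullback random absorbing set in $\mathfrak{D}$; (iii) that $\Phi$ is $\mathfrak{D}$-pullback asymptotically compact in $\H$; and (iv) that the universe $\mathfrak{D}$ is neighbourhood closed (and inclusion closed). Each of these has already been established in the excerpt, so the proof is essentially an assembly of prior results.

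First I would recall that the fact that $\Phi$, defined in \eqref{Phi}, is a non-autonomous random dynamical system on $\H$ follows from Lemma~\ref{lem-Sol-CTGF} (existence and uniqueness of weak solutions), Lemma~\ref{LusinC} (Lusin-type measurability, giving $\mathscr{F}$-measurability of $\omega\mapsto\Phi(t,\mathfrak{r},\omega,\x)$), and Lemma~\ref{Continuity} (continuity of the solution map in the initial data), together with the cocycle property inherited from the transformation \eqref{COV}; this was already noted right after Lemma~\ref{Continuity}. Next, Lemma~\ref{PAS} provides the $\mathfrak{D}$-pullback random absorbing set $\mathcal{M}=\{\mathcal{M}(\mathfrak{r},\omega)\}\in\mathfrak{D}$, which is closed (a closed ball in $\H$ of radius $\mathcal{K}(\mathfrak{r},\omega)^{1/2}$) and measurable in $\omega$ since $\mathcal{K}(\mathfrak{r},\cdot)$ is measurable; its membership in $\mathfrak{D}$ was verified in \eqref{IRAS3-N}.

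Then I would record that the $\mathfrak{D}$-pullback asymptotic compactness of $\Phi$ in $\H$ on bounded domains is precisely the content of Corollary~\ref{Asymptotic_B}: for $t_n\to\infty$ and $\v^n_0\in B(\mathfrak{r}-t_n,\vartheta_{-t_n}\omega)$, the sequence $\v(\mathfrak{r};\mathfrak{r}-t_n,\vartheta_{-\mathfrak{r}}\omega,\v^n_0)$ has a convergent subsequence in $\H$; dividing by the scalar $\z(\mathfrak{r},\vartheta_{-\mathfrak{r}}\omega)$ and using \eqref{Phi} transfers this to $\Phi(t_n,\mathfrak{r}-t_n,\vartheta_{-t_n}\omega,\u^n_0)$. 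Finally, since $\mathfrak{D}$ was observed to be neighbourhood closed right after its definition (and it is clearly inclusion closed), all hypotheses of the abstract theorem in \cite{SandN_Wang} are met, yielding a unique $\mathfrak{D}$-pullback random attractor $\mathscr{A}=\{\mathscr{A}(\mathfrak{r},\omega):\mathfrak{r}\in\R,\omega\in\Omega\}\in\mathfrak{D}$ for $\Phi$ in $\H$, and by the change of variables \eqref{COV}/\eqref{Phi} this is the attractor for the original system \eqref{STGF}.

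Since every component has already been proved, there is no genuine obstacle here; the only point demanding a little care is the bookkeeping that converts statements phrased for the deterministic-with-random-coefficients system \eqref{CTGF} (as in Corollary~\ref{Asymptotic_B}) into the corresponding statements for $\Phi$ via \eqref{Phi}, checking that the multiplicative scalar $\z(t+\mathfrak{r},\vartheta_{-\mathfrak{r}}\omega)$ is bounded away from $0$ and $\infty$ on compact time sets (which follows from continuity of $y$ and the bound \eqref{conv_z2}), so that boundedness and compactness properties are preserved under this rescaling. With that verification in place, the theorem follows directly from \cite{SandN_Wang}.
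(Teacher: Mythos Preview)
Your proposal is correct and follows essentially the same approach as the paper: the paper's proof simply cites Corollary~\ref{Asymptotic_B} (asymptotic compactness), Lemma~\ref{PAS} (absorbing set), and the abstract result \cite[Theorem~2.23]{SandN_Wang}. Your write-up is more detailed in spelling out the measurability, the neighbourhood-closedness of $\mathfrak{D}$, and the bookkeeping for the rescaling by $\z$, but the underlying argument is identical.
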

\begin{proof}
	The proof follows from Corollary \ref{Asymptotic_B} ($\mathfrak{D}$-pullback asymptotic compactness of $\Phi$), Lemma \ref{PAS} (existence of $\mathfrak{D}$-pullback random absorbing set) and the abstract theory given in \cite[Theorem 2.23]{SandN_Wang}.
\end{proof}

	\section{Pullback random attractors associated with $\Phi$: Unbounded domains}\label{Sec5}\setcounter{equation}{0}

In this section, we establish second main result of this article, that is, the existence of $\mathfrak{D}$-pullback random attractor associated with $\Phi$ on unbounded domains. Since the existence of $\mathfrak{D}$-pullback random absorbing set associated with $\Phi$ is known from Section \ref{Sec3},  we are only remain to show the $\mathfrak{D}$-pullback asymptotic compactness for $\Phi$ to obtain the existence of $\mathfrak{D}$-pullback random attractors for $\Phi$.  As discussed in Remark \ref{rem-forcing-hypo}, we require the following assumption on the external forcing term $\f$ to prove the result of this section. 

\begin{hypothesis}\label{Hyp-f-U}
	For the external forcing term $\f\in\mathrm{L}^{2}_{\emph{loc}}(\R;\L^2(\O))$, there exists a number $\delta\in[0,\frac{\sigma^2}{2})$ such that for every $c>0$,
	\begin{align}\label{forcing2-U}
		\lim_{s\to-\infty}e^{cs}\int_{-\infty}^{0} e^{\delta\zeta}\|\f(\cdot,\zeta+s)\|^2_{2}\d \zeta=0.
	\end{align}
\end{hypothesis}
A direct consequence of  Hypothesis \ref{Hyp-f-U} is as follows:
\begin{proposition}[Proposition 4.2, \cite{Kinra+Mohan_2023_DIE}]\label{Hypo_Conseq-U}
	Assume that Hypothesis \ref{Hyp-f-U} holds. Then
	\begin{align}\label{forcing1-U}
		\int_{-\infty}^{\mathfrak{r}} e^{\delta\zeta}\|\f(\cdot,\zeta)\|^2_{2}\d \zeta<\infty, \ \ \text{ for all } \mathfrak{r}\in\R,
	\end{align}
	where $\delta$ is the same as in \eqref{forcing2-U}. 
\end{proposition}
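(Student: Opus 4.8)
The plan is to deduce \eqref{forcing1-U} from \eqref{forcing2-U} by an elementary change of variables together with the local integrability of $\f$; this is precisely the argument of \cite[Proposition~4.2]{Kinra+Mohan_2023_DIE}. First I would substitute $\xi=\zeta+s$ in the integral appearing in Hypothesis~\ref{Hyp-f-U}, which gives
\[
\int_{-\infty}^{0} e^{\delta\zeta}\|\f(\cdot,\zeta+s)\|^2_{2}\d\zeta
= e^{-\delta s}\int_{-\infty}^{s} e^{\delta\xi}\|\f(\cdot,\xi)\|^2_{2}\d\xi
=: e^{-\delta s}\,G(s),
\]
where $G\colon\R\to[0,+\infty]$ is nondecreasing because the integrand is nonnegative. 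In these terms, condition \eqref{forcing2-U} reads $\lim_{s\to-\infty}e^{(c-\delta)s}G(s)=0$ for every $c>0$.

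The next step is to read off that $G$ is finite near $-\infty$. Since the limit above exists and equals $0$, the quantity $e^{(c-\delta)s}G(s)$ must in particular be finite for all sufficiently negative $s$, so $G(s)<\infty$ there. More quantitatively, choosing $c=\tfrac{\delta}{2}$ when $\delta>0$ (respectively any fixed $c>0$ when $\delta=0$), there exists $s_0\in\R$ such that $G(s)\le e^{(\delta-c)s}<\infty$ (respectively $G(s)\le e^{-cs}<\infty$) for all $s\le s_0$; in particular $\int_{-\infty}^{s_0} e^{\delta\xi}\|\f(\cdot,\xi)\|^2_{2}\d\xi<\infty$.

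Finally, fix an arbitrary $\mathfrak{r}\in\R$. Shrinking $s_0$ if necessary (which is harmless since $G$ is finite on the whole half-line $(-\infty,s_0]$), we may assume $s_0\le\mathfrak{r}$, and then
\[
\int_{-\infty}^{\mathfrak{r}} e^{\delta\xi}\|\f(\cdot,\xi)\|^2_{2}\d\xi
=\int_{-\infty}^{s_0} e^{\delta\xi}\|\f(\cdot,\xi)\|^2_{2}\d\xi
+\int_{s_0}^{\mathfrak{r}} e^{\delta\xi}\|\f(\cdot,\xi)\|^2_{2}\d\xi,
\]
where the first term is finite by the previous step and the second is finite because $\f\in\mathrm{L}^2_{\mathrm{loc}}(\R;\L^2(\O))$ and $[s_0,\mathfrak{r}]$ is compact. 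This establishes \eqref{forcing1-U}.

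There is no genuine obstacle here: the proposition is essentially a reformulation of the hypothesis. The only point requiring a little care is that one must not take for granted that $G$ is finite; this has to be extracted first from the fact that \eqref{forcing2-U} asserts the existence of a (finite) limit, after which monotonicity of $G$ and the local integrability of $\f$ make the passage to a general upper endpoint $\mathfrak{r}$ completely routine.
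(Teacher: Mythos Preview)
Your argument is correct: the change of variables $\xi=\zeta+s$ rewrites the hypothesis as $e^{(c-\delta)s}G(s)\to 0$ for every $c>0$, from which finiteness of $G(s_0)=\int_{-\infty}^{s_0}e^{\delta\xi}\|\f(\cdot,\xi)\|_2^2\d\xi$ for some $s_0$ follows immediately, and the local integrability of $\f$ handles the compact interval $[s_0,\mathfrak{r}]$. The paper does not supply its own proof here but simply cites \cite[Proposition~4.2]{Kinra+Mohan_2023_DIE}; your write-up reproduces exactly that argument, so there is nothing further to compare.
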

\begin{example}
	Take $\f(\cdot,t)=t^{p}\f_1$, for any $p\geq0$ and $\f_1\in\L^2(\O)$. Note that the conditions \eqref{forcing2-U}-\eqref{forcing1-U} do not need $\f$ to be bounded in $\L^2(\O)$ at $\pm\infty$.
\end{example}

 \begin{remark}
 	We note that Hypothesis~\ref{Hyp-f-U} is used for deriving the uniform tail estimates for solutions of \eqref{eqn-CTGF-Pressure} (see Lemma~\ref{LR} below), which in turn are crucial for establishing the $\mathfrak{D}$-pullback asymptotic compactness of $\Phi$ on unbounded domains. It is worth pointing out that Hypothesis~\ref{Hyp-f-U} alone would suffice if one could apply the energy equality approach introduced in \cite{Ball} to verify the $\mathfrak{D}$-pullback asymptotic compactness, as done in \cite{PeriodicWang} for the stochastic Navier-Stokes equations and in \cite{Kinra+Mohan_2023_DIE} for the stochastic convective Brinkman-Forchheimer equations. However, for the third-grade fluid equations \eqref{equationV1}, we are not able to use energy equality method (see Remarks \ref{rem-forcing-hypo} and \ref{rem-weak-conti}).
 \end{remark}

 The following result is used to obtain the $\mathfrak{D}$-pullback asymptotic compactness of $\Phi$.

 \begin{lemma}\label{D-convege}
 	Assume that $\f\in \mathrm{L}^{2}_{\mathrm{loc}}(\R;\H^{-1}(\O))$ and $\{\v^n_{0}\}_{n\in\N}$ be a bounded sequence in $\H$. Then, for every $\omega\in\Omega$ and $\mathfrak{r}\in\R$, there exists $\tilde{\v} \in \mathrm{L}^{\infty}(\mathfrak{r}, \mathfrak{r}+T; \H)\cap\mathrm{L}^{2}(\mathfrak{r}, \mathfrak{r}+T; \V)\cap\mathrm{L}^{4}(\mathfrak{r}, \mathfrak{r}+T; \mathbb{W}^{1,4}(\O))$  such that along a subsequence
 	\begin{align}\label{D-convege-4}
 		\v(\cdot;\mathfrak{r},\omega,\v^n_{0}) \ \to \tilde{\v} \ \text{ in	} \ \mathrm{L}^{2}(\mathfrak{r},\mathfrak{r}+T;\L^2_{\mathrm{loc}}(\O)),
 	\end{align}
 	for every $T>0$.
 \end{lemma}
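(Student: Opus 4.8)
Here is how I would go about proving Lemma~\ref{D-convege}.

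The plan is to reproduce the compactness argument from the proof of Lemma~\ref{PCB}, the only structural change being that the compact embedding $\V\hookrightarrow\H$, which fails on an unbounded domain, is replaced by the \emph{local} compact embedding $\H^1(\O_k)\hookrightarrow\hookrightarrow\L^2(\O_k)$ on the exhausting family of bounded subdomains $\O_k:=\O\cap\{x\in\R^d:|x|<k\}$, $k\in\N$, and that a diagonal extraction over $k$ and over the time horizon is performed so that one and the same subsequence serves for every $T>0$. First I would fix $T>0$ and record the uniform bounds: since $\{\v^n_0\}_{n\in\N}$ is bounded in $\H$, the energy estimate \eqref{UE4} applied on $[\mathfrak{r},\mathfrak{r}+T]$ (exactly as in Lemma~\ref{LemmaUe}, but with $\mathfrak{r}$ in place of $\mathfrak{r}-t$) shows that $\{\v(\cdot;\mathfrak{r},\omega,\v^n_0)\}_{n\in\N}$ is bounded in $\mathrm{L}^{\infty}(\mathfrak{r},\mathfrak{r}+T;\H)\cap\mathrm{L}^{2}(\mathfrak{r},\mathfrak{r}+T;\V)\cap\mathrm{L}^{4}(\mathfrak{r},\mathfrak{r}+T;\mathbb{W}^{1,4}(\O))$, with bounds independent of $n$. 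Estimating the right-hand side of the first equation in \eqref{CTGF} via $\|\A\v\|_{\X'}\le\|\v\|_{\X}$, $\|\J(\v)\|_{\V'}\le C\|\Arm(\v)\|_4^2$, $\|\K(\v)\|_{\mathbb{W}^{-1,\frac43}}\le C\|\Arm(\v)\|_4^3$, $\|\B(\v,\v)\|_{\V'}\le C\|\v\|_4^2$ together with Gagliardo--Nirenberg \eqref{Gen_lady-4} (which places $\|\v\|_4^2$ in $\mathrm{L}^2$ in time), and using the continuity of $t\mapsto\z(t,\omega)$, one then gets that $\{\frac{\d}{\d t}\v(\cdot;\mathfrak{r},\omega,\v^n_0)\}_{n\in\N}$ is bounded in $\mathrm{L}^{\frac43}(\mathfrak{r},\mathfrak{r}+T;\V'+\mathbb{W}^{-1,\frac43}(\O))$.

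Next I would localize. For each fixed $k\in\N$, the restrictions $\{\v^n|_{\O_k}\}_n$ are bounded in $\mathrm{L}^2(\mathfrak{r},\mathfrak{r}+T;\H^1(\O_k))$, their time derivatives are bounded in $\mathrm{L}^{\frac43}(\mathfrak{r},\mathfrak{r}+T;Y_k)$ for a Banach space $Y_k$ into which $\L^2(\O_k)$ embeds continuously (for instance $Y_k=\mathbb{W}^{-1,\frac43}(\O_k)+(\H^1_0(\O_k))'$), and $\H^1(\O_k)\hookrightarrow\hookrightarrow\L^2(\O_k)\hookrightarrow Y_k$ with the first embedding compact. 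By the Aubin-Lions compactness lemma, $\{\v^n|_{\O_k}\}_n$ is relatively compact in $\mathrm{L}^2(\mathfrak{r},\mathfrak{r}+T;\L^2(\O_k))$. Carrying this out successively for $k=1,2,\dots$ and for $T=1,2,\dots$ and extracting a diagonal subsequence (not relabelled) — together with a weak/weak-$*$ extraction realizing the $\mathrm{L}^\infty(\H)$, $\mathrm{L}^2(\V)$ and $\mathrm{L}^4(\mathbb{W}^{1,4})$ bounds — yields a limit $\tilde\v$ which is independent of $k$ and $T$ (the weak limits are identified with the strong $\L^2_{\mathrm{loc}}$ limit by passing to a.e.\ convergence along a further subsequence), which lies in $\mathrm{L}^{\infty}(\mathfrak{r},\mathfrak{r}+T;\H)\cap\mathrm{L}^{2}(\mathfrak{r},\mathfrak{r}+T;\V)\cap\mathrm{L}^{4}(\mathfrak{r},\mathfrak{r}+T;\mathbb{W}^{1,4}(\O))$ for every $T>0$ by lower semicontinuity of the norms, and which satisfies $\v(\cdot;\mathfrak{r},\omega,\v^n_0)\to\tilde\v$ in $\mathrm{L}^2(\mathfrak{r},\mathfrak{r}+T;\L^2(\O_k))$ for all $k$ and $T$; since $\L^2_{\mathrm{loc}}(\O)$-convergence means $\L^2(\O_k)$-convergence for every $k$, this is precisely \eqref{D-convege-4}.

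I expect the main difficulty to be organizational rather than analytical. One must verify that the time-derivative bound — only $\mathrm{L}^{\frac43}$ in time, because of the cubic term $\K$ — is still enough to apply the Aubin-Lions lemma on each $\O_k$ (it is, as $\mathrm{L}^{\frac43}\subset\mathrm{L}^1$ on a bounded interval), and, more importantly, carry out the nested extraction over the exhausting family $\{\O_k\}_{k\in\N}$ and over the horizons $T=m\in\N$ so that a single subsequence works for all $T$ simultaneously with a well-defined common limit $\tilde\v$, and that the various weak and strong limits are mutually consistent. Everything else is the verbatim localization of the argument already used in the bounded-domain case (Lemma~\ref{PCB}).
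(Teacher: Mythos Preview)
Your proposal is correct and follows essentially the same approach as the paper: the paper does not spell out the argument but refers to \cite[Lemma~6.7]{KK+FC1} and \cite[Lemma~5.3]{KK+FC2}, whose content is precisely the localized Aubin--Lions argument you describe --- uniform $\mathrm{L}^\infty(\H)\cap\mathrm{L}^2(\V)\cap\mathrm{L}^4(\mathbb{W}^{1,4})$ bounds from the energy inequality~\eqref{UE4}, the $\mathrm{L}^{\frac43}$ time-derivative bound from the structure of the right-hand side (exactly as in Lemma~\ref{PCB}), compactness of $\H^1(\O_k)\hookrightarrow\L^2(\O_k)$ on each bounded piece, and a diagonal extraction over $k$ and $T$. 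Your organizational remarks about consistency of the weak and strong limits and about the $\mathrm{L}^{\frac43}$ time regularity being sufficient for Aubin--Lions are exactly the points that need to be checked, and they go through as you indicate.
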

	\begin{proof}
		The proof of this lemma closely follows those of \cite[Lemma 6.7]{KK+FC1} and \cite[Lemma 5.3]{KK+FC2}. 
	\end{proof}

Next, we aim to show that the solution to system \eqref{CTGF} satisfies the uniform tail estimates. For this purpose, we required the following result.

\begin{lemma}\label{LemmaUe-24}
	Assume that condition \eqref{third-grade-paremeters-res} and Hypothesis \ref{Hyp-f-U} are satisfied. Then, for every $\mathfrak{r}\in \R$, $\omega\in \Omega$ and $B=\{B(\mathfrak{r},\omega):\mathfrak{r}\in\R,\; \omega\in \Omega\}\in \mathfrak{D}$, there exists $\widehat{\mathcal{T}}=\widehat{\mathcal{T}}(\mathfrak{r},\omega,B)>0$ such that for all $t\geq \widehat{\mathcal{T}}$ and $\v_{\mathfrak{r}-t}\in B(\mathfrak{r}-t,\vartheta_{-t}\omega)$, the solution $\v(\cdot)$ of system \eqref{CTGF} with $\omega$ replaced by $\vartheta_{-\mathfrak{r}}\omega$ satisfies
	\begin{align}\label{UE-22}
		& \int_{\mathfrak{r}-t}^{\mathfrak{r}}e^{-\int^{\mathfrak{r}}_{\zeta}\left(\frac{\sigma^2}{2}-2\sigma y(\vartheta_{\upeta-\mathfrak{r}}\omega)\right)\d \upeta}\|\v(\zeta;\mathfrak{r}-t,\vartheta_{-\mathfrak{r}}\omega,\v_{\mathfrak{r}-t})\|^2_{2}\d\zeta
		  < + \infty,
	\end{align}
and 
\begin{align}\label{UE-24}
	& \int_{\mathfrak{r}-t}^{\mathfrak{r}}e^{-\int^{\mathfrak{r}}_{\zeta}\left(\frac{\sigma^2}{2}-2\sigma y(\vartheta_{\upeta-\mathfrak{r}}\omega)\right)\d \upeta} \z^{-2}(\zeta,\vartheta_{-\mathfrak{r}}\omega) \|\v(\zeta;\mathfrak{r}-t,\vartheta_{-\mathfrak{r}}\omega,\v_{\mathfrak{r}-t})\|^4_{2}\d\zeta
	  < + \infty.
\end{align}
\end{lemma}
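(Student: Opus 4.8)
The two quantities in \eqref{UE-22} and \eqref{UE-24} are weighted $\mathrm{L}^1$-in-time norms of $\|\v(\zeta)\|_2^2$ and $\|\v(\zeta)\|_2^4$ over the \emph{bounded} interval $[\mathfrak r-t,\mathfrak r]$, so the plan is to reduce everything to the uniform energy bounds already produced in Lemma~\ref{LemmaUe}. First I would observe that, since $\L^2(\O)\hookrightarrow\H^{-1}(\O)$ continuously (with $\|\f\|_{\H^{-1}}\le C\|\f\|_{2}$), Hypothesis~\ref{Hyp-f-U} implies Hypothesis~\ref{Hyp-f-B}, and hence Lemma~\ref{LemmaUe} (together with the intermediate inequalities \eqref{UE5}--\eqref{UE7}) is available, now with $\|\f\|_{\H^{-1}}$ replaced throughout by $C\|\f\|_{2}$.

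\textbf{Step 1: a pointwise-in-time $\L^2$ bound.} I would revisit the proof of Lemma~\ref{LemmaUe}: the variation-of-constants identity \eqref{UE5}, combined with the absorption inequality \eqref{UE6}, actually yields, for every $t\ge\widehat{\mathcal T}:=\mathcal T(\mathfrak r,\omega,B)$ and every $\zeta\in[\mathfrak r-t,\mathfrak r]$ (not merely for $\zeta\in[\mathfrak r-1,\mathfrak r]$, which is the range retained in the statement of Lemma~\ref{LemmaUe} for later convenience),
\[
\|\v(\zeta;\mathfrak r-t,\vartheta_{-\mathfrak r}\omega,\v_{\mathfrak r-t})\|_2^2\;\le\;\Lambda(\mathfrak r,\omega)\,e^{\int_\zeta^{\mathfrak r}\left(\frac{\sigma^2}{2}-2\sigma y(\vartheta_{\upeta-\mathfrak r}\omega)\right)\d\upeta},
\]
where
\[
\Lambda(\mathfrak r,\omega):=\frac{4}{\min\{2\nu\varepsilon_0,\sigma^2\}}\int_{-\infty}^{0}e^{\frac{\sigma^2}{2}\zeta+2\sigma\int_\zeta^0 y(\vartheta_\upeta\omega)\d\upeta}\,\z^2(\zeta,\omega)\,\|\f(\cdot,\zeta+\mathfrak r)\|_2^2\,\d\zeta
\]
is finite by the splitting argument of \eqref{UE7} together with Hypothesis~\ref{Hyp-f-U}. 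The crucial point is that $\Lambda(\mathfrak r,\omega)$ is independent of $\zeta$, of $t\ge\widehat{\mathcal T}$, and of the choice $\v_{\mathfrak r-t}\in B(\mathfrak r-t,\vartheta_{-t}\omega)$.

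\textbf{Step 2: conclusion by substitution.} With the bound of Step~1 in hand, \eqref{UE-22} is immediate: multiplying by $e^{-\int_\zeta^{\mathfrak r}\left(\frac{\sigma^2}{2}-2\sigma y(\vartheta_{\upeta-\mathfrak r}\omega)\right)\d\upeta}$ cancels the exponential, so the integrand is bounded by $\Lambda(\mathfrak r,\omega)$ and
\[
\int_{\mathfrak r-t}^{\mathfrak r}e^{-\int_\zeta^{\mathfrak r}\left(\frac{\sigma^2}{2}-2\sigma y(\vartheta_{\upeta-\mathfrak r}\omega)\right)\d\upeta}\|\v(\zeta)\|_2^2\,\d\zeta\;\le\;\Lambda(\mathfrak r,\omega)\,t\;<\;+\infty .
\]
For \eqref{UE-24}, squaring the bound of Step~1 gives $\|\v(\zeta)\|_2^4\le\Lambda(\mathfrak r,\omega)^2\,e^{2\int_\zeta^{\mathfrak r}\left(\frac{\sigma^2}{2}-2\sigma y(\vartheta_{\upeta-\mathfrak r}\omega)\right)\d\upeta}$, whence
\[
\int_{\mathfrak r-t}^{\mathfrak r}e^{-\int_\zeta^{\mathfrak r}\left(\frac{\sigma^2}{2}-2\sigma y(\vartheta_{\upeta-\mathfrak r}\omega)\right)\d\upeta}\z^{-2}(\zeta,\vartheta_{-\mathfrak r}\omega)\|\v(\zeta)\|_2^4\,\d\zeta\;\le\;\Lambda(\mathfrak r,\omega)^2\int_{\mathfrak r-t}^{\mathfrak r}e^{\int_\zeta^{\mathfrak r}\left(\frac{\sigma^2}{2}-2\sigma y(\vartheta_{\upeta-\mathfrak r}\omega)\right)\d\upeta}\z^{-2}(\zeta,\vartheta_{-\mathfrak r}\omega)\,\d\zeta ;
\]
since $\z^{-2}(\zeta,\vartheta_{-\mathfrak r}\omega)=e^{2\sigma y(\vartheta_{\zeta-\mathfrak r}\omega)}$ and $t\mapsto y(\vartheta_t\omega)$ is continuous, the integrand on the right is a continuous function of $\zeta$ on the compact interval $[\mathfrak r-t,\mathfrak r]$, hence bounded there, and \eqref{UE-24} follows.

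\textbf{Main obstacle.} The only point requiring genuine care is Step~1, namely re-running the Gronwall/variation-of-constants step of Lemma~\ref{LemmaUe} with a \emph{general} terminal time $s=\zeta\in[\mathfrak r-t,\mathfrak r]$ and using the temperedness of $B$ through \eqref{UE6} to absorb the initial-datum contribution into the forcing integral, so that the resulting constant $\Lambda(\mathfrak r,\omega)$ is genuinely uniform in $\zeta$, in $t\ge\widehat{\mathcal T}$, and in $\v_{\mathfrak r-t}$; once this is established, the two assertions are one-line substitutions together with the boundedness of a continuous function on a compact interval. (In fact, for the bare statement one could even argue more directly that the two integrands in \eqref{UE-22}--\eqref{UE-24} are continuous in $\zeta$ on $[\mathfrak r-t,\mathfrak r]$, using $\v\in\mathrm C([\mathfrak r-t,\infty);\H)$ and continuity of $t\mapsto y(\vartheta_t\omega)$, hence integrable; but the quantitative form derived above is the one that will feed into the uniform tail estimates of Lemma~\ref{LR}.)
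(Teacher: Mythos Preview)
Your argument is correct for the literal statement of the lemma, and it is considerably shorter than the paper's. You exploit the fact that in \eqref{UE5} the bracket on the right does not depend on $s$, so once $t\ge\mathcal T$ and the initial-datum term is absorbed via \eqref{UE6}, the pointwise bound $\|\v(\zeta)\|_2^2\le\Lambda(\mathfrak r,\omega)\,e^{\int_\zeta^{\mathfrak r}(\frac{\sigma^2}{2}-2\sigma y)\,\d\upeta}$ holds for every $\zeta\in[\mathfrak r-t,\mathfrak r]$, and both \eqref{UE-22} and \eqref{UE-24} follow by substitution and continuity on a compact interval. The paper instead inserts \eqref{UE5-24} directly into the weighted integrals and carries out a careful splitting in the $\zeta$-variable (near and far from~$0$) together with the sublinear growth \eqref{Z3}.

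The essential difference is what each argument buys. Your bounds are $t$-dependent: $\Lambda(\mathfrak r,\omega)\,t$ for \eqref{UE-22}, and for \eqref{UE-24} an integral whose integrand $e^{\int_\zeta^{\mathfrak r}(\frac{\sigma^2}{2}-2\sigma y)\,\d\upeta}\z^{-2}(\zeta,\vartheta_{-\mathfrak r}\omega)$ blows up exponentially as $\zeta\to-\infty$, so the bound grows without control as $t\to\infty$. The paper's splitting produces bounds that are \emph{uniform in $t\ge\widehat{\mathcal T}$}. This is not cosmetic: in Lemma~\ref{LR}, the terms coming from $\|\v\|_2^2$ and $\z^{-2}\|\v\|_2^4$ in \eqref{ep8} appear with a prefactor $C/\mathfrak k$, and one must choose $\mathfrak k_0$ independent of $t$ so that the tail is $\le\varepsilon$ for \emph{all} $t\ge\widetilde{\mathcal T}$. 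That requires exactly the $t$-uniform control the paper establishes. So your closing remark that your ``quantitative form\ldots\ is the one that will feed into the uniform tail estimates of Lemma~\ref{LR}'' is not right: your form proves the lemma as stated but would force $\mathfrak k_0$ to depend on $t$ downstream, which is precisely what Lemma~\ref{LR} forbids.
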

\begin{proof}
	Applying the variation of constant formula to \eqref{UE4} and replacing $\omega$ by $\vartheta_{-\mathfrak{r}}\omega$ in the above inequality, we obtain
	\begin{align}\label{UE5-24}
		&\|\v(s;\mathfrak{r}-t,\vartheta_{-\mathfrak{r}}\omega,\v_{\mathfrak{r}-t})\|^2_{2} 
		\nonumber\\&\leq e^{-\int^{s}_{\mathfrak{r}-t}\left(\frac{\sigma^2}{2}-2\sigma y(\vartheta_{\upeta-\mathfrak{r}}\omega)\right)\d \upeta}\|\v_{\mathfrak{r}-t}\|^2_{2}
		\nonumber\\ & \quad + \frac{2}{\min\left\{2\nu\varepsilon_0,\sigma^2\right\}}\int_{\mathfrak{r}-t}^{s} e^{-\int^{s}_{\zeta}\left(\frac{\sigma^2}{2}-2\sigma y(\vartheta_{\upeta-\mathfrak{r}}\omega)\right)\d \upeta} \z^2(\zeta,\vartheta_{-\mathfrak{r}}\omega) \|\f(\cdot,\zeta)\|^2_{\H^{-1}}\d \zeta,
	\end{align}
	for a.e. $s\in[\mathfrak{r}-t, \mathfrak{r}]$. 
	\vskip 2mm
	\noindent
	\textbf{Step I:} \textit{In this step, we prove \eqref{UE-22}.} From \eqref{UE5-24}, we write 
	\begin{align}\label{UE51-24}
		&\int_{\mathfrak{r}-t}^{\mathfrak{r}}e^{-\int^{\mathfrak{r}}_{s}\left(\frac{\sigma^2}{2}-2\sigma y(\vartheta_{\upeta-\mathfrak{r}}\omega)\right)\d \upeta} \|\v(s;\mathfrak{r}-t,\vartheta_{-\mathfrak{r}}\omega,\v_{\mathfrak{r}-t})\|^2_{2} \d s
		\nonumber\\&\leq t e^{-\frac{\sigma^2}{4}t} e^{-\int^{\mathfrak{r}}_{\mathfrak{r}-t}\left(\frac{\sigma^2}{4}-2\sigma y(\vartheta_{\upeta-\mathfrak{r}}\omega)\right)\d \upeta}\|\v_{\mathfrak{r}-t}\|^2_{2}
		\nonumber\\ & \quad + \frac{2}{\min\left\{2\nu\varepsilon_0,\sigma^2\right\}} \int_{\mathfrak{r}-t}^{\mathfrak{r}} \int_{\mathfrak{r}-t}^{s} e^{-\int^{\mathfrak{r}}_{\zeta}\left(\frac{\sigma^2}{2}-2\sigma y(\vartheta_{\upeta-\mathfrak{r}}\omega)\right)\d \upeta} \z^2(\zeta,\vartheta_{-\mathfrak{r}}\omega) \|\f(\cdot,\zeta)\|^2_{\H^{-1}}\d \zeta\d s
		\nonumber\\& = t e^{-\frac{\sigma^2}{4}t} e^{-\int^{0}_{-t}\left(\frac{\sigma^2}{4}-2\sigma y(\vartheta_{\upeta}\omega)\right)\d \upeta}\|\v_{\mathfrak{r}-t}\|^2_{2}
		\nonumber\\ & \quad + \frac{2}{\min\left\{2\nu\varepsilon_0,\sigma^2\right\}} \int_{-\infty}^{0} \int_{-\infty}^{s} e^{-\int^{0}_{\zeta}\left(\frac{\sigma^2}{2}-2\sigma y(\vartheta_{\upeta}\omega)\right)\d \upeta} \z^2(\zeta,\omega) \|\f(\cdot,\zeta+\mathfrak{r})\|^2_{\H^{-1}}\d \zeta\d s.
	\end{align}	
	Let us establish that the final term of \eqref{UE51-24} is finite. By \eqref{Z3}, we have that there exist $R_3, R_4<0$ such that for all $\zeta\leq R_3$,
	\begin{align*}
	\frac14 \left(\frac{\sigma^2}{2}- \delta \right)\zeta \leq 	-2\sigma y(\vartheta_{\zeta}\omega)\leq - \frac14 \left(\frac{\sigma^2}{2}- \delta \right)\zeta,
	\end{align*}
	and for all $\zeta\leq R_4$,
	\begin{align*}
		\frac14 \left(\frac{\sigma^2}{2}- \delta \right)\zeta \leq  2\sigma\int^{0}_{\zeta} y(\vartheta_{\upeta}\omega)\d \upeta\leq - \frac14 \left(\frac{\sigma^2}{2} - \delta \right)\zeta,
	\end{align*}
	where $\delta$ is the positive constant appearing in \eqref{forcing1-B}. Therefore, for all $\zeta\leq \widehat{R}=:\min\{R_3,R_4\}<0$, we have
	\begin{align}\label{R-hat}
		&e^{\frac{\sigma^2}{2}\zeta+2\sigma\int^{0}_{\zeta} y(\vartheta_{\upeta}\omega)\d \upeta} \z^2(\zeta,\omega)= e^{\frac{\sigma^2}{2}\zeta+2\sigma\int^{0}_{\zeta} y(\vartheta_{\upeta}\omega)\d \upeta -2\sigma y(\vartheta_{\zeta}\omega)}   \leq e^{\frac12 \left(\frac{\sigma^2}{2} -  \delta \right)\zeta + \delta\zeta}.
	\end{align}
	Now, it follows from \eqref{forcing1-U} that for every $\mathfrak{r}\in\R$ and $\omega\in\Omega$,
	\begin{align}\label{UE71-24}
		& \int_{-\infty}^{0} \int_{-\infty}^{s} e^{-\int^{0}_{\zeta}\left(\frac{\sigma^2}{2}-2\sigma y(\vartheta_{\upeta}\omega)\right)\d \upeta} \z^2(\zeta,\omega) \|\f(\cdot,\zeta+\mathfrak{r})\|^2_{\H^{-1}}\d \zeta\d s
		\nonumber\\ & = \int_{\widehat{R}}^{0} \int_{\widehat{R}}^{s} e^{-\int^{0}_{\zeta}\left(\frac{\sigma^2}{2}-2\sigma y(\vartheta_{\upeta}\omega)\right)\d \upeta} \z^2(\zeta,\omega) \|\f(\cdot,\zeta+\mathfrak{r})\|^2_{\H^{-1}}\d \zeta\d s
		\nonumber\\ & \quad + \int_{\widehat{R}}^{0} \int_{-\infty}^{\widehat{R}} e^{-\int^{0}_{\zeta}\left(\frac{\sigma^2}{2}-2\sigma y(\vartheta_{\upeta}\omega)\right)\d \upeta} \z^2(\zeta,\omega) \|\f(\cdot,\zeta+\mathfrak{r})\|^2_{\H^{-1}}\d \zeta\d s
		\nonumber\\ & \quad + \int_{-\infty}^{\widehat{R}} \int_{-\infty}^{s} e^{-\int^{0}_{\zeta}\left(\frac{\sigma^2}{2}-2\sigma y(\vartheta_{\upeta}\omega)\right)\d \upeta} \z^2(\zeta,\omega) \|\f(\cdot,\zeta+\mathfrak{r})\|^2_{\H^{-1}}\d \zeta\d s
		\nonumber\\ & \leq  (-\widehat{R}) \sup_{\zeta\in[\widehat{R},0]}\left[e^{-\int^{0}_{\zeta}\left(\frac{\sigma^2}{2}-2\sigma y(\vartheta_{\upeta}\omega)\right)\d \upeta} \z^2(\zeta,\omega)\right] \int_{\widehat{R}}^{0}\|\f(\cdot,\zeta+\mathfrak{r})\|^2_{\H^{-1}}\d \zeta
		\nonumber\\ & \quad + (-\widehat{R}) \int_{-\infty}^{\widehat{R}} e^{\frac12\left(\frac{\sigma^2}{2}-\delta\right)\zeta + \delta\zeta}  \|\f(\cdot,\zeta+\mathfrak{r})\|^2_{\H^{-1}}\d \zeta  + \int_{-\infty}^{\widehat{R}} \int_{-\infty}^{s} e^{\frac12\left(\frac{\sigma^2}{2}-\delta\right)\zeta + \delta\zeta}  \|\f(\cdot,\zeta+\mathfrak{r})\|^2_{\H^{-1}}\d \zeta\d s
		\nonumber\\ & \leq  (-\widehat{R}) \sup_{\zeta\in[\widehat{R},0]}\left[e^{-\int^{0}_{\zeta}\left(\frac{\sigma^2}{2}-2\sigma y(\vartheta_{\upeta}\omega)\right)\d \upeta} \z^2(\zeta,\omega)\right] \int_{\widehat{R}}^{0}\|\f(\cdot,\zeta+\mathfrak{r})\|^2_{\H^{-1}}\d \zeta
		\nonumber\\ & \quad + (-\widehat{R}) \int_{-\infty}^{0} e^{\delta\zeta}  \|\f(\cdot,\zeta+\mathfrak{r})\|^2_{\H^{-1}}\d \zeta  + \int_{-\infty}^{0} e^{\frac12\left(\frac{\sigma^2}{2}-\delta\right)s } \d s \cdot \int_{-\infty}^{0} e^{ \delta\zeta}  \|\f(\cdot,\zeta+\mathfrak{r})\|^2_{\H^{-1}}\d \zeta
		\nonumber\\ & \quad < +\infty.
	\end{align} 
	
	Since $\v_{\mathfrak{r}-t}\in B(\mathfrak{r}-t,\vartheta_{-t}\omega)$, we have from \eqref{Z3} that for sufficiently large $t>0$
	\begin{align*}
		e^{-\frac{\sigma^2}{4}t+2\sigma\int^{0}_{-t} y(\vartheta_{\upeta}\omega)\d \upeta}\|\v_{\mathfrak{r}-t}\|^2_{2}\leq e^{-\frac{\sigma^2}{8} t}\|B(\mathfrak{r}-t,\vartheta_{-t}\omega)\|^2_{\H}\to 0\ \text{ as } \ t \to \infty.
	\end{align*} 
Also $t e^{-\frac{\sigma^2}{4}t}\to 0$ as $t\to\infty$.  Therefore, there exists $\widehat{\mathcal{T}}_1=\widehat{\mathcal{T}}_1(\mathfrak{r},\omega,B)>0$ such that 
	\begin{align}\label{UE61-24}
		& te^{-\frac{\sigma^2}{2}t+2\sigma\int^{0}_{-t} y(\vartheta_{\upeta}\omega)\d \upeta}\|\v_{\mathfrak{r}-t}\|^2_{2}
		\nonumber\\ & \leq \frac{2}{\min\left\{2\nu\varepsilon_0,\sigma^2\right\}} \int_{-t}^{0} \int_{-t}^{s} e^{-\int^{0}_{\zeta}\left(\frac{\sigma^2}{2}-2\sigma y(\vartheta_{\upeta}\omega)\right)\d \upeta} \z^2(\zeta,\omega) \|\f(\cdot,\zeta+\mathfrak{r})\|^2_{\H^{-1}}\d \zeta\d s,
	\end{align}
	for all  $t\geq \widehat{\mathcal{T}}_1$. Hence, from \eqref{UE51-24}-\eqref{UE61-24}, \eqref{UE-22} follows.
	
	\vskip 2mm
	\noindent
	\textbf{Step I:} \textit{In this step, we prove \eqref{UE-24}.}  Again, from \eqref{UE5-24}, we write 
	\begin{align}\label{UE52-24}
		&\int_{\mathfrak{r}-t}^{\mathfrak{r}}e^{-\int^{\mathfrak{r}}_{s}\left(\frac{\sigma^2}{2}-2\sigma y(\vartheta_{\upeta-\mathfrak{r}}\omega)\right)\d \upeta} \z^{-2}(s,\vartheta_{-\mathfrak{r}}\omega) \|\v(s;\mathfrak{r}-t,\vartheta_{-\mathfrak{r}}\omega,\v_{\mathfrak{r}-t})\|^4_{2} \d s
		\nonumber\\ & \leq  \int_{\mathfrak{r}-t}^{\mathfrak{r}}e^{-\int^{\mathfrak{r}}_{s}\left(\frac{\sigma^2}{2}-2\sigma y(\vartheta_{\upeta-\mathfrak{r}}\omega)\right)\d \upeta} \z^{-2}(s,\vartheta_{-\mathfrak{r}}\omega) \bigg[e^{-\int^{s}_{\mathfrak{r}-t}\left(\frac{\sigma^2}{2}-2\sigma y(\vartheta_{\upeta-\mathfrak{r}}\omega)\right)\d \upeta}\|\v_{\mathfrak{r}-t}\|^2_{2}
		\nonumber\\ & \quad + \frac{2}{\min\left\{2\nu\varepsilon_0,\sigma^2\right\}}\int_{\mathfrak{r}-t}^{s} e^{-\int^{s}_{\zeta}\left(\frac{\sigma^2}{2}-2\sigma y(\vartheta_{\upeta-\mathfrak{r}}\omega)\right)\d \upeta} \z^2(\zeta,\vartheta_{-\mathfrak{r}}\omega) \|\f(\cdot,\zeta)\|^2_{\H^{-1}}\d \zeta\bigg]^2 \d s
		\nonumber\\ & \leq 2 \int_{\mathfrak{r}-t}^{\mathfrak{r}}e^{-\int^{\mathfrak{r}}_{s}\left(\frac{\sigma^2}{2}-2\sigma y(\vartheta_{\upeta-\mathfrak{r}}\omega)\right)\d \upeta} \z^{-2}(s,\vartheta_{-\mathfrak{r}}\omega) \bigg[e^{-2\int^{s}_{\mathfrak{r}-t}\left(\frac{\sigma^2}{2}-2\sigma y(\vartheta_{\upeta-\mathfrak{r}}\omega)\right)\d \upeta}\|\v_{\mathfrak{r}-t}\|^4_{2} \bigg]\d s
		  + \frac{4}{\min\left\{2\nu\varepsilon_0,\sigma^2\right\}} 
		\nonumber\\ & \quad \times \int_{\mathfrak{r}-t}^{\mathfrak{r}}e^{-\int^{\mathfrak{r}}_{s}\left(\frac{\sigma^2}{2}-2\sigma y(\vartheta_{\upeta-\mathfrak{r}}\omega)\right)\d \upeta} \z^{-2}(s,\vartheta_{-\mathfrak{r}}\omega) \bigg[\int_{\mathfrak{r}-t}^{s} e^{-\int^{s}_{\zeta}\left(\frac{\sigma^2}{2}-2\sigma y(\vartheta_{\upeta-\mathfrak{r}}\omega)\right)\d \upeta} \z^2(\zeta,\vartheta_{-\mathfrak{r}}\omega) \|\f(\cdot,\zeta)\|^2_{\H^{-1}}\d \zeta\bigg]^2 \d s
		\nonumber\\ & = 2 e^{-2\int^{\mathfrak{r}}_{\mathfrak{r}-t}\left(\frac{\sigma^2}{2}-2\sigma y(\vartheta_{\upeta-\mathfrak{r}}\omega)\right)\d \upeta}\|\v_{\mathfrak{r}-t}\|^4_{2}   \int_{\mathfrak{r}-t}^{\mathfrak{r}}e^{\int^{\mathfrak{r}}_{s}\left(\frac{\sigma^2}{2}-2\sigma y(\vartheta_{\upeta-\mathfrak{r}}\omega)\right)\d \upeta} \z^{-2}(s,\vartheta_{-\mathfrak{r}}\omega) \d s
		+ \frac{4}{\min\left\{2\nu\varepsilon_0,\sigma^2\right\}} 
		\nonumber\\ & \quad \times \int_{\mathfrak{r}-t}^{\mathfrak{r}}e^{\int^{\mathfrak{r}}_{s}\left(\frac{\sigma^2}{2}-2\sigma y(\vartheta_{\upeta-\mathfrak{r}}\omega)\right)\d \upeta} \z^{-2}(s,\vartheta_{-\mathfrak{r}}\omega) \bigg[\int_{\mathfrak{r}-t}^{s} e^{-\int^{\mathfrak{r}}_{\zeta}\left(\frac{\sigma^2}{2}-2\sigma y(\vartheta_{\upeta-\mathfrak{r}}\omega)\right)\d \upeta} \z^2(\zeta,\vartheta_{-\mathfrak{r}}\omega) \|\f(\cdot,\zeta)\|^2_{\H^{-1}}\d \zeta\bigg]^2 \d s
		\nonumber\\ & = 2 e^{-2\int^{0}_{-t}\left(\frac{\sigma^2}{2}-2\sigma y(\vartheta_{\upeta}\omega)\right)\d \upeta}\|\v_{\mathfrak{r}-t}\|^4_{2}   \int_{-t}^{0}e^{\int^{0}_{s}\left(\frac{\sigma^2}{2}-2\sigma y(\vartheta_{\upeta}\omega)\right)\d \upeta}  \z^{-2}(s,\omega) \d s
		+ \frac{4}{\min\left\{2\nu\varepsilon_0,\sigma^2\right\}} 
		\nonumber\\ & \quad \times \int_{-t}^{0}e^{\int^{0}_{s}\left(\frac{\sigma^2}{2}-2\sigma y(\vartheta_{\upeta}\omega)\right)\d \upeta} \z^{-2}(s,\omega) \bigg[\int_{-t}^{s} e^{-\int^{0}_{\zeta}\left(\frac{\sigma^2}{2}-2\sigma y(\vartheta_{\upeta}\omega)\right)\d \upeta} \z^2(\zeta,\omega) \|\f(\cdot,\zeta+\mathfrak{r})\|^2_{\H^{-1}}\d \zeta\bigg]^2 \d s
		\nonumber\\ & \leq  2 e^{-\sigma^2 t + 4\sigma \int^{0}_{-t} y(\vartheta_{\upeta}\omega) \d \upeta}\|\v_{\mathfrak{r}-t}\|^4_{2}   \int_{-t}^{0}e^{\int^{0}_{s}\left(\frac{\sigma^2}{2}-2\sigma y(\vartheta_{\upeta}\omega)\right)\d \upeta} \z^{-2}(s,\omega) \d s
		+ \frac{4}{\min\left\{2\nu\varepsilon_0,\sigma^2\right\}} 
		\nonumber\\ & \quad \times \int_{-\infty}^{0}e^{\int^{0}_{s}\left(\frac{\sigma^2}{2}-2\sigma y(\vartheta_{\upeta}\omega)\right)\d \upeta} \z^{-2}(s,\omega) \bigg[\int_{-\infty}^{s} e^{-\int^{0}_{\zeta}\left(\frac{\sigma^2}{2}-2\sigma y(\vartheta_{\upeta}\omega)\right)\d \upeta} \z^2(\zeta,\omega) \|\f(\cdot,\zeta+\mathfrak{r})\|^2_{\H^{-1}}\d \zeta\bigg]^2 \d s
		\nonumber\\ & \leq  2 e^{-\sigma^2 t + 4\sigma \int^{0}_{-t} y(\vartheta_{\upeta}\omega) \d \upeta}\|\v_{\mathfrak{r}-t}\|^4_{2}   \int_{-t}^{0}e^{\int^{0}_{s}\left(\frac{\sigma^2}{2}-2\sigma y(\vartheta_{\upeta}\omega)\right)\d \upeta} \z^{-2}(s,\omega) \d s
		 \nonumber\\ & \quad +  \frac{8}{\min\left\{2\nu\varepsilon_0,\sigma^2\right\}} 
		   \int_{\widehat{R}}^{0}\hspace{-2mm}e^{\int^{0}_{s}\left(\frac{\sigma^2}{2}-2\sigma y(\vartheta_{\upeta}\omega)\right)\d \upeta} \z^{-2}(s,\omega) \bigg[\int_{-t}^{s} e^{-\int^{0}_{\zeta}\left(\frac{\sigma^2}{2}-2\sigma y(\vartheta_{\upeta}\omega)\right)\d \upeta} \z^2(\zeta,\omega) \|\f(\cdot,\zeta+\mathfrak{r})\|^2_{\H^{-1}}\d \zeta\bigg]^2 \d s
		   \nonumber\\ & \quad +  \frac{8}{\min\left\{2\nu\varepsilon_0,\sigma^2\right\}} 
		   \int_{-t}^{\widehat{R}}\hspace{-2mm}e^{\int^{0}_{s}\left(\frac{\sigma^2}{2}-2\sigma y(\vartheta_{\upeta}\omega)\right)\d \upeta} \z^{-2}(s,\omega) \bigg[\int_{-t}^{s} e^{-\int^{0}_{\zeta}\left(\frac{\sigma^2}{2}-2\sigma y(\vartheta_{\upeta}\omega)\right)\d \upeta} \z^2(\zeta,\omega) \|\f(\cdot,\zeta+\mathfrak{r})\|^2_{\H^{-1}}\d \zeta\bigg]^2 \d s
		   \nonumber\\ & \leq  2 \underbrace{e^{-\sigma^2 t + 4\sigma \int^{0}_{-t} y(\vartheta_{\upeta}\omega) \d \upeta}\|\v_{\mathfrak{r}-t}\|^4_{2}   \int_{-t}^{0}e^{-\frac{\sigma^2}{2}s -2\sigma\int^{0}_{s} y(\vartheta_{\upeta}\omega)\d \upeta} \z^{-2}(s,\omega) \d s}_{\to 0 \text{ as } t\to\infty, \text{ due to } \eqref{UE82-24} \text{ below}.}
		   \nonumber\\ & \quad +  \frac{8}{\min\left\{2\nu\varepsilon_0,\sigma^2\right\}} 
		   \underbrace{\int_{\widehat{R}}^{0}e^{\int^{0}_{s}\left(\frac{\sigma^2}{2}-2\sigma y(\vartheta_{\upeta}\omega)\right)\d \upeta} \z^{-2}(s,\omega) \d s}_{<+\infty, \text{ since } y \text{ is continuous.}} 
		   \nonumber\\ & \quad \times \underbrace{\bigg[\int_{-\infty}^{0} e^{-\int^{0}_{\zeta}\left(\frac{\sigma^2}{2}-2\sigma y(\vartheta_{\upeta}\omega)\right)\d \upeta} \z^2(\zeta,\omega) \|\f(\cdot,\zeta+\mathfrak{r})\|^2_{\H^{-1}}\d \zeta\bigg]^2 \d s}_{< +\infty, \text{ due to } \eqref{UE7}.}
		   +  \frac{8}{\min\left\{2\nu\varepsilon_0,\sigma^2\right\}} 
		  \nonumber\\ & \quad \times \underbrace{\int_{-\infty}^{\widehat{R}}e^{\int^{0}_{s}\left(\frac{\sigma^2}{2}-2\sigma y(\vartheta_{\upeta}\omega)\right)\d \upeta} \z^{-2}(s,\omega) \bigg[\int_{-\infty}^{s} e^{-\int^{0}_{\zeta}\left(\frac{\sigma^2}{2}-2\sigma y(\vartheta_{\upeta}\omega)\right)\d \upeta} \z^2(\zeta,\omega) \|\f(\cdot,\zeta+\mathfrak{r})\|^2_{\H^{-1}}\d \zeta\bigg]^2 \d s}_{< +\infty, \text{ due to } \eqref{UE62-24} \text{ below}.}.
	\end{align}	
	We now show that the final term of \eqref{UE52-24} is finite. It follows from \eqref{R-hat} and \eqref{forcing1-U} that 
	\begin{align}\label{UE62-24}
		& \int_{-\infty}^{\widehat{R}}e^{\int^{0}_{s}\left(\frac{\sigma^2}{2}-2\sigma y(\vartheta_{\upeta}\omega)\right)\d \upeta} \z^{-2}(s,\omega) \bigg[\int_{-\infty}^{s} e^{-\int^{0}_{\zeta}\left(\frac{\sigma^2}{2}-2\sigma y(\vartheta_{\upeta}\omega)\right)\d \upeta} \z^2(\zeta,\omega) \|\f(\cdot,\zeta+\mathfrak{r})\|^2_{\H^{-1}}\d \zeta\bigg]^2 \d s
		\nonumber\\ 
		& \leq  \int_{-\infty}^{\widehat{R}}e^{\int^{0}_{s}\left(\frac{\sigma^2}{2}-2\sigma y(\vartheta_{\upeta}\omega)\right)\d \upeta} \z^{-2}(s,\omega) \bigg[\int_{-\infty}^{s} e^{\frac12 \left(\frac{\sigma^2}{2} -  \delta \right)\zeta + \delta\zeta} \|\f(\cdot,\zeta+\mathfrak{r})\|^2_{\H^{-1}}\d \zeta\bigg]^2 \d s
		\nonumber\\ 
		& \leq  \int_{-\infty}^{\widehat{R}}e^{\int^{0}_{s}\left(\frac{\sigma^2}{2}-2\sigma y(\vartheta_{\upeta}\omega)\right)\d \upeta} \z^{-2}(s,\omega) e^{ \left(\frac{\sigma^2}{2} -  \delta \right)s } \bigg[\int_{-\infty}^{s} e^{\delta\zeta} \|\f(\cdot,\zeta+\mathfrak{r})\|^2_{\H^{-1}}\d \zeta\bigg]^2 \d s
		\nonumber\\ 
		& \leq  \underbrace{\int_{-\infty}^{\widehat{R}}e^{-\delta s - \int^{0}_{s}2\sigma y(\vartheta_{\upeta}\omega)\d \upeta} \z^{-2}(s,\omega) \d s}_{<+\infty, \text{ due to } \eqref{Z3}.} \times   \bigg[\underbrace{\int_{-\infty}^{0} e^{\delta\zeta} \|\f(\cdot,\zeta+\mathfrak{r})\|^2_{\H^{-1}}\d \zeta}_{<+\infty, \text{ due to } \eqref{forcing1-U}.}\bigg]^2  
		\nonumber\\ & < + \infty.
	\end{align}
Next we show that $e^{-\sigma^2 t + 4\sigma \int^{0}_{-t} y(\vartheta_{\upeta}\omega) \d \upeta}\|\v_{\mathfrak{r}-t}\|^4_{2}   \int_{-t}^{0}e^{-\frac{\sigma^2}{2}s -2\sigma\int^{0}_{s} y(\vartheta_{\upeta}\omega)\d \upeta} \z^{-2}(s,\omega) \d s \to 0$ as $t\to\infty$.  By \eqref{Z3}, we have that there exist $R_5<0$ such that for all $s\leq R_5$,
\begin{align*}
\frac{\sigma^2}{8}  s \leq  2\sigma\int^{0}_{s} y(\vartheta_{\upeta}\omega)\d \upeta - 2\sigma y(\vartheta_s\omega) \leq - \frac{\sigma^2}{8} s.
\end{align*}
Therefore, for all $t> -R_5$, we have 
	\begin{align}\label{UE72-24}
		& e^{-\sigma^2 t + 4\sigma \int^{0}_{-t} y(\vartheta_{\upeta}\omega) \d \upeta}\|\v_{\mathfrak{r}-t}\|^4_{2}   \int_{-t}^{0}e^{-\frac{\sigma^2}{2}s -2\sigma\int^{0}_{s} y(\vartheta_{\upeta}\omega)\d \upeta} \z^{-2}(s,\omega) \d s
		\nonumber\\ & \leq  e^{-\sigma^2 t + \frac{\sigma^2}{4} t}\|\v_{\mathfrak{r}-t}\|^4_{2}   \bigg[\int_{R_5}^{0}e^{-\frac{\sigma^2}{2}s -2\sigma\int^{0}_{s} y(\vartheta_{\upeta}\omega)\d \upeta} \z^{-2}(s,\omega) \d s + \int_{-t}^{R_5}e^{-\frac{\sigma^2}{2}s - \frac{\sigma^2}{8} s} \d s\bigg]
		\nonumber\\ & \leq  e^{- \frac{3\sigma^2}{4} t}\|\v_{\mathfrak{r}-t}\|^4_{2}   \int_{R_5}^{0}e^{-\frac{\sigma^2}{2}s -2\sigma\int^{0}_{s} y(\vartheta_{\upeta}\omega)\d \upeta} \z^{-2}(s,\omega) \d s  +  e^{- \frac{3\sigma^2}{4} t}\|\v_{\mathfrak{r}-t}\|^4_{2} \bigg[\frac{8}{5\sigma^2}e^{\frac{5\sigma^2}{8}t}\bigg] 
		\nonumber\\ & \leq  e^{- \frac{\sigma^2}{8} t} \|\v_{\mathfrak{r}-t}\|^4_{2} \bigg[\underbrace{ \int_{R_5}^{0}e^{-\frac{\sigma^2}{2}s -2\sigma\int^{0}_{s} y(\vartheta_{\upeta}\omega)\d \upeta} \z^{-2}(s,\omega) \d s}_{<+\infty, \text{ since } y \text{ is continuous.}} +  \frac{8}{5\sigma^2}\bigg] .
	\end{align}
Since $\v_{\mathfrak{r}-t}\in B(\mathfrak{r}-t,\vartheta_{-t}\omega)$, we have  that 
\begin{align*}
	e^{- \frac{\sigma^2}{16} t}\|\v_{\mathfrak{r}-t}\|^2_{2}\leq e^{-\frac{\sigma^2}{16} t}\|B(\mathfrak{r}-t,\vartheta_{-t}\omega)\|^2_{\H}\to 0\ \text{ as } \ t \to \infty,
\end{align*} 
	which implies from \eqref{UE72-24}
	\begin{align}\label{UE82-24}
		& e^{-\sigma^2 t + 4\sigma \int^{0}_{-t} y(\vartheta_{\upeta}\omega) \d \upeta}\|\v_{\mathfrak{r}-t}\|^4_{2}   \int_{-t}^{0}e^{-\frac{\sigma^2}{2}s -2\sigma\int^{0}_{s} y(\vartheta_{\upeta}\omega)\d \upeta} \z^{-2}(s,\omega) \d s\to 0\ \text{ as } \ t \to \infty.
	\end{align}
	Hence, from \eqref{UE52-24}, \eqref{UE62-24} and \eqref{UE82-24}, \eqref{UE-24} follows. This completes the proof of this lemma.
\end{proof}

 Let $\boldsymbol{\wp}$ be a smooth function such that $0\leq\boldsymbol{\wp}(\xi)\leq 1$ for $\xi\in[0,\infty)$ and
\begin{align}\label{337}
	\boldsymbol{\wp}(\xi)=\begin{cases*}
		0, \text{ for }0\leq \xi\leq 1,\\
		1, \text{ for } \xi\geq2.
	\end{cases*}
\end{align}
By the hypothesis $\boldsymbol{\wp}$ is constant (hence all derivatives vanish) on
$[0,1]$ and on $[2,\infty)$. Thus the only interval where $\boldsymbol{\wp}'$ and
$\boldsymbol{\wp}''$ can be nonzero in the compact interval $[1,2]$. Since $\boldsymbol{\wp}'\in \mathrm{C}([1,2])$ and $\boldsymbol{\wp}''\in \mathrm{C}([1,2])$, each attains a finite maximum
on $[1,2]$. Let us define
\begin{align*}
	M_1:=\sup_{\xi\in[1,2]}|\boldsymbol{\wp}'(\xi)|,\ 
	M_2:=\sup_{\xi\in[1,2]}|\boldsymbol{\wp}''(\xi)|,
\end{align*}
and set $C:=\max\{M_1,M_2\}$. For $\xi\notin[1,2]$ we have $\boldsymbol{\wp}'(\xi)=\boldsymbol{\wp}''(\xi)=0$,
so the inequalities $|\boldsymbol{\wp}'(\xi)|\le C$ and $|\boldsymbol{\wp}''(\xi)|\le C$ hold for all
$\xi\in[0,\infty)$.

Next result provides that the solution to system \eqref{CTGF} satisfies the uniform tail estimates.\\

\begin{lemma}\label{LR}
	Assume that condition \eqref{third-grade-paremeters-res} and Hypothesis \ref{Hyp-f-U} are satisfied, and let $\mathfrak{r}\in\R$, $\omega\in\Omega$ and $B=\{B(\mathfrak{r},\omega): \mathfrak{r}\in\R, \; \omega\in\Omega\}\in\mathfrak{D}$. Then, for each $\varepsilon>0$ and for each $s\in[\mathfrak{r}-1,\mathfrak{r}]$, there exists $\mathfrak{k}_0:=\mathfrak{k}_0(\varepsilon,s)\in\N$ such that  the solution $\v(\cdot)$ to system \eqref{CTGF} satisfies
	\begin{align}
		\left\|\v(s;\mathfrak{r}-t,\vartheta_{-\mathfrak{r}}\omega,\v_{\mathfrak{r}-t})\right\|^2_{\L^2(\O^{c}_{\mathfrak{k}})}\leq\varepsilon, 
	\end{align}
	for all $\v_{\mathfrak{r}-t}\in B(\mathfrak{r}-t,\vartheta_{-t}\omega)$, for all $t\geq \widetilde{\mathcal{T}}=\max\{\mathcal{T}, \widehat{\mathcal{T}}\}$ and for all $\mathfrak{k}\geq \mathfrak{k}_0$,	where  $\O_{\mathfrak{k}}=\{x\in\O :\,|x|< \mathfrak{k}\}$, $\O^{c}_{\mathfrak{k}}=\O\backslash\O_{\mathfrak{\mathfrak{k}}}$,
	and   ${\mathcal{T}}$ and $\widehat{\mathcal{T}}$ are the same time given in Lemmas \ref{LemmaUe} and \ref{LemmaUe-24}, respectively.
\end{lemma}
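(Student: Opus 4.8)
The plan is to run Wang's uniform tail-estimate technique \cite{UTE-Wang,PeriodicWang}, adapted to the random third-grade system, working with the pressure form \eqref{eqn-CTGF-Pressure} rather than the projected form \eqref{CTGF}, since the Helmholtz projection does not commute with multiplication by a cut-off. Using the function $\boldsymbol{\wp}$ from \eqref{337}, set $\boldsymbol{\wp}_{\mathfrak{k}}(x):=\boldsymbol{\wp}\big(|x|^{2}/\mathfrak{k}^{2}\big)$, so that $\boldsymbol{\wp}_{\mathfrak{k}}\equiv 0$ on $\O_{\mathfrak{k}}$, $\boldsymbol{\wp}_{\mathfrak{k}}\equiv 1$ on $\O^{c}_{\sqrt{2}\mathfrak{k}}$, and $|\nabla\boldsymbol{\wp}_{\mathfrak{k}}|+|\Delta\boldsymbol{\wp}_{\mathfrak{k}}|\leq C\mathfrak{k}^{-1}$ with both quantities supported in the annulus $\O_{\sqrt{2}\mathfrak{k}}\setminus\O_{\mathfrak{k}}$ (this is where the constants $M_{1},M_{2}$ fixed before the lemma enter). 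First I would take the $\L^{2}(\O)$-inner product of \eqref{eqn-CTGF-Pressure} with $\boldsymbol{\wp}_{\mathfrak{k}}\v$, integrate by parts, and replace $\omega$ by $\vartheta_{-\mathfrak{r}}\omega$, exactly as in the derivation of Lemma~\ref{LemmaUe}.

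Next I would estimate the terms. The viscous term yields $\nu\int_{\O}\boldsymbol{\wp}_{\mathfrak{k}}|\nabla\v|^{2}\d x$ together with $-\tfrac{\nu}{2}\int_{\O}(\Delta\boldsymbol{\wp}_{\mathfrak{k}})|\v|^{2}\d x$, bounded by $C\mathfrak{k}^{-1}\|\v\|_{2}^{2}$; the zeroth-order term $\big[\tfrac{\sigma^{2}}{2}-\sigma y(\vartheta_{t}\omega)\big]\int_{\O}\boldsymbol{\wp}_{\mathfrak{k}}|\v|^{2}\d x$ is retained. Using $\operatorname{div}\v=0$, the convective term equals $-\tfrac12\z^{-1}\int_{\O}|\v|^{2}(\v\cdot\nabla\boldsymbol{\wp}_{\mathfrak{k}})\d x$, hence is $\leq C\mathfrak{k}^{-1}\z^{-1}\|\v\|_{3}^{3}$, controlled by \eqref{Gen_lady-3} and \eqref{Korn-ineq}; the pressure term becomes $\z\int_{\O}\mathbf{P}\,(\v\cdot\nabla\boldsymbol{\wp}_{\mathfrak{k}})\d x\leq C\mathfrak{k}^{-1}\z\|\mathbf{P}\|_{\L^{2}(\O_{\sqrt{2}\mathfrak{k}}\setminus\O_{\mathfrak{k}})}\|\v\|_{2}$, where $\mathbf{P}$ is recovered by taking the divergence of \eqref{eqn-CTGF-Pressure} and invoking an elliptic estimate, as in \cite{KK+FC1,KK+FC2}, to bound $\|\mathbf{P}\|_{\L^{2}}$ by powers of $\z^{-1}\|\Arm(\v)\|_{4}$, $\|\v\|_{2}$ and $\|\f\|_{2}$. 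For $\J$ and $\K$, integrating by parts and using $\Arm(\v):\nabla\v=\tfrac12|\Arm(\v)|^{2}$ produces the dissipative quantity $\tfrac{\beta}{2}\z^{-2}\int_{\O}\boldsymbol{\wp}_{\mathfrak{k}}|\Arm(\v)|^{4}\d x$ plus an annular remainder $O(\mathfrak{k}^{-1})\z^{-2}\int_{\O_{\sqrt{2}\mathfrak{k}}\setminus\O_{\mathfrak{k}}}|\Arm(\v)|^{3}|\v|\d x$, while the $\J$-contribution is handled via \eqref{J1-J2}; choosing $\varepsilon_{0}:=1-\sqrt{\alpha^{2}/(2\beta\nu)}\in(0,1)$ and using \eqref{Sobolev+Korn} and Young's inequality, one absorbs the $\J$-term and the annular remainders into $\nu\int_{\O}\boldsymbol{\wp}_{\mathfrak{k}}|\nabla\v|^{2}$ and $\tfrac{\beta}{2}\z^{-2}\int_{\O}\boldsymbol{\wp}_{\mathfrak{k}}|\Arm(\v)|^{4}$, exactly as in \eqref{LC3}--\eqref{LC8}. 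Finally $\z\int_{\O}\boldsymbol{\wp}_{\mathfrak{k}}\f\cdot\v\d x\leq\tfrac{\sigma^{2}}{8}\int_{\O}\boldsymbol{\wp}_{\mathfrak{k}}|\v|^{2}\d x+C\z^{2}\|\f\|^{2}_{\L^{2}(\O^{c}_{\mathfrak{k}})}$, which is precisely where Hypothesis~\ref{Hyp-f-U} (with the $\L^{2}$-norm, instead of the $\H^{-1}$-norm of Hypothesis~\ref{Hyp-f-B}) is essential, since $\|\f\|_{\L^{2}(\O^{c}_{\mathfrak{k}})}\to0$ as $\mathfrak{k}\to\infty$.

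Collecting everything gives a differential inequality $\tfrac{\d}{\d t}\big(\int_{\O}\boldsymbol{\wp}_{\mathfrak{k}}|\v|^{2}\big)+\big(\tfrac{\sigma^{2}}{2}-2\sigma y(\vartheta_{t}\omega)\big)\int_{\O}\boldsymbol{\wp}_{\mathfrak{k}}|\v|^{2}\leq \mathfrak{k}^{-1}\mathcal{R}(t,\omega)+C\z^{2}(t,\omega)\|\f(\cdot,t)\|^{2}_{\L^{2}(\O^{c}_{\mathfrak{k}})}$, where $\mathcal{R}$ is a sum of terms built from $\|\v\|_{2}^{2}$, $\z^{-2}\|\v\|_{2}^{4}$, $\|\Arm(\v)\|_{2}^{2}$, $\z^{-2}\|\Arm(\v)\|_{4}^{4}$ and the pressure bound. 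Applying the variation-of-constants formula on $[\mathfrak{r}-t,s]$ for $s\in[\mathfrak{r}-1,\mathfrak{r}]$ and replacing $\omega$ by $\vartheta_{-\mathfrak{r}}\omega$ as in \eqref{UE5}: the initial term decays to $0$ as $t\to\infty$ because $\v_{\mathfrak{r}-t}\in B(\mathfrak{r}-t,\vartheta_{-t}\omega)\in\mathfrak{D}$ and by \eqref{Z3}, just as in \eqref{UE6}; the weighted time-integral of $\mathfrak{k}^{-1}\mathcal{R}$ is $\leq C\mathfrak{k}^{-1}$ times a finite quantity \emph{uniform in} $t\geq\widetilde{\mathcal{T}}$, thanks to Lemma~\ref{LemmaUe} and Lemma~\ref{LemmaUe-24} (and the integrability of the pressure contribution, obtained the same way), hence $<\varepsilon/2$ for $\mathfrak{k}$ large; and the weighted time-integral of the forcing term is $<\varepsilon/2$ for $\mathfrak{k}$ large by the $\L^{2}$-tail smallness of $\f$ together with a dominated-convergence argument identical to \eqref{UE7}--\eqref{UE71-24}. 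Choosing $\mathfrak{k}_{0}$ accordingly completes the proof. \textbf{The main obstacle} is the pressure term: one must produce an $\L^{2}$-in-space bound for $\mathbf{P}$ on the annulus carrying the correct $\z$- and $\omega$-weights, so that after the $\mathfrak{k}^{-1}$ factor it is both small in $\mathfrak{k}$ and time-integrable uniformly in $t$; the remaining nonlinear terms reduce to the same $\varepsilon_{0}$-absorption bookkeeping already performed in Lemmas~\ref{LemmaUe} and \ref{Continuity}.
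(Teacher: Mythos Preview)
Your overall strategy --- test against a spatial cutoff of $\v$, derive a weighted differential inequality, apply variation of constants, and use Lemmas~\ref{LemmaUe} and~\ref{LemmaUe-24} to control the right-hand side --- is exactly the paper's approach. The treatment of the $\Delta\v$, $\B$, $\J$, $\K$ and $\f$ terms is essentially the same (the paper uses $\boldsymbol{\wp}^{2}$ rather than $\boldsymbol{\wp}$, which is convenient for keeping a factor of $\boldsymbol{\wp}$ after integration by parts, but this is a minor technical choice).

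The genuine discrepancy is in the pressure term, and here your proposal has a gap. You integrate by parts to obtain $\z\int_{\O}\mathbf{P}\,(\v\cdot\nabla\boldsymbol{\wp}_{\mathfrak{k}})\,\d x$ and then claim an $\L^{2}$ bound on $\mathbf{P}$. But the cubic stress $|\Arm(\v)|^{2}\Arm(\v)$ lies only in $\L^{4/3}(\O)$, so the associated pressure piece $\mathbf{P}_{2}$ is only in $\L^{4/3}(\O)$ (equivalently, $\nabla\mathbf{P}_{2}\in\mathbb{W}^{-1,4/3}(\O)$, which is all the Appendix~\ref{PR} delivers). On a general unbounded domain there is no reason to have $\mathbf{P}\in\L^{2}(\O)$, and the elliptic estimates from \cite{KK+FC1,KK+FC2} you cite are established on Poincar\'e domains and do not transfer. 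Your bound $C\mathfrak{k}^{-1}\z\|\mathbf{P}\|_{\L^{2}}\|\v\|_{2}$ is therefore unjustified as written.

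The paper sidesteps this by \emph{not} integrating the pressure term by parts. It splits $\nabla\mathbf{P}=\nabla\mathbf{P}_{1}+\nabla\mathbf{P}_{2}$ with $\nabla\mathbf{P}_{1}\in\H^{-1}$ and $\nabla\mathbf{P}_{2}\in\mathbb{W}^{-1,4/3}$ (Appendix~\ref{PR}), and estimates by duality:
\[
|\langle\nabla\mathbf{P}_{1},\boldsymbol{\wp}^{2}\v\rangle|\leq\|\nabla\mathbf{P}_{1}\|_{\H^{-1}}\|\boldsymbol{\wp}^{2}\v\|_{\H^{1}},\qquad
|\langle\nabla\mathbf{P}_{2},\boldsymbol{\wp}^{2}\v\rangle|\leq\|\nabla\mathbf{P}_{2}\|_{\mathbb{W}^{-1,4/3}}\|\boldsymbol{\wp}^{2}\v\|_{\mathbb{W}^{1,4}}.
\]
The second factors are controlled by $\bigl(\int_{\{|x|\geq\mathfrak{k}\}}|\v|^{2}+|\Arm(\v)|^{2}\bigr)^{1/2}$ and $\bigl(\{\int_{\{|x|\geq\mathfrak{k}\}}|\v|^{2}\}^{2}+\int_{\{|x|\geq\mathfrak{k}\}}|\Arm(\v)|^{4}\bigr)^{1/4}$, and the smallness as $\mathfrak{k}\to\infty$ comes from these tail integrals (dominated convergence, using the weighted time-integrability from Lemmas~\ref{LemmaUe} and~\ref{LemmaUe-24}), \emph{not} from a $\mathfrak{k}^{-1}$ factor. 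The weighted time-integrability of $\|\nabla\mathbf{P}_{i}\|$ is recorded as \eqref{estimate-pressure1}--\eqref{estimate-pressure2}. Your route could be salvaged by splitting $\mathbf{P}=\mathbf{P}_{1}+\mathbf{P}_{2}$ and pairing $\mathbf{P}_{2}$ with $\v$ via $\L^{4/3}$--$\L^{4}$ H\"older (then invoking \eqref{Gag-Korn-ineq} and Lemma~\ref{LemmaUe-24}), but as stated the $\L^{2}$-pressure bound is the missing piece.
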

\begin{proof} 
	For $\mathfrak{k}\in \N$ and $x\in\R^d$, taking the inner product of the first equation of \eqref{eqn-CTGF-Pressure} with $\boldsymbol{\wp}^2\left(\mathfrak{k}^{-2}|x|^2\right)\v$ in $\mathbb{L}^2(\O)$, we have
	\begin{align}\label{ep1}
		&\frac{1}{2} \frac{\d}{\d t}\int_{\O}\boldsymbol{\wp}^2\left(\mathfrak{k}^{-2}|x|^2\right)|\v|^2\d x +\left[\frac{\sigma^2}{2}-\sigma y(\vartheta_{t}\omega)\right]\int_{\O}\boldsymbol{\wp}^2\left(\mathfrak{k}^{-2}|x|^2\right)|\v|^2\d x
		\nonumber \\& = \underbrace{\nu\left<\Delta\v , \boldsymbol{\wp}^2\left(\mathfrak{k}^{-2}|x|^2\right) \v \right>}_{I_{1}(\mathfrak{k},t)} \underbrace{- \z^{-1}(t,\omega) \left<(\v\cdot \nabla)\v , \boldsymbol{\wp}^2\left(\mathfrak{k}^{-2}|x|^2\right)\v\right> }_{I_{2}(\mathfrak{k},t)} 
		\nonumber\\ & \quad + \underbrace{\alpha \z^{-1}(t,\omega) \left<\text{div}((\Arm(\v))^2) , \boldsymbol{\wp}^2\left(\mathfrak{k}^{-2}|x|^2\right) \v \right>}_{I_{3}(\mathfrak{k},t)} 
		  + \underbrace{ \beta \z^{-2}(t,\omega) \left< \text{div}(|\Arm(\v)|^2\Arm(\v)) , \boldsymbol{\wp}^2\left(\mathfrak{k}^{-2}|x|^2\right) \v \right>}_{I_{4}(\mathfrak{k},t)}
		  \nonumber\\ & \quad  \underbrace{- \z(t,\omega) \left< \nabla \textbf{P}, \boldsymbol{\wp}^2\left(\mathfrak{k}^{-2}|x|^2\right)\v\right>}_{I_{5}(\mathfrak{k},t)} + \underbrace{ \z(t,\omega) \left<\f\boldsymbol{\wp}^2\left(\mathfrak{k}^{-2}|x|^2\right)\v\right>}_{I_{6}(\mathfrak{k},t)}.
	\end{align}	
	We first recall some useful bounds for $\textbf{P}_1$ and $\textbf{P}_2$ from the Appendix \ref{PR}, where $\nabla\textbf{P}=\nabla\textbf{P}_1+\nabla\textbf{P}_2$. From \eqref{estimate-pressure3}-\eqref{estimate-pressure4}, we write
	\begin{align}\label{estimate-pressure1}
		\int_{\mathfrak{r}-t}^{\mathfrak{r}}e^{-\int^{\mathfrak{r}}_{\zeta}\left(\frac{\sigma^2}{2}-2\sigma y(\vartheta_{\upeta-\mathfrak{r}}\omega)\right)\d \upeta}\z^2 (\zeta,\vartheta_{-\mathfrak{r}}\omega)\|\nabla\mathbf{P}_1(\zeta;\mathfrak{r}-t,\vartheta_{-\mathfrak{r}}\omega,\v_{\mathfrak{r}-t})\|^2_{\H^{-1}}\d\zeta < +\infty,
	\end{align}
	and 
	\begin{align}\label{estimate-pressure2}
		\int_{\mathfrak{r}-t}^{\mathfrak{r}}e^{-\int^{\mathfrak{r}}_{\zeta}\left(\frac{\sigma^2}{2}-2\sigma y(\vartheta_{\upeta-\mathfrak{r}}\omega)\right)\d \upeta}\z^2(\zeta,\vartheta_{-\mathfrak{r}}\omega)\|\nabla\mathbf{P}_2(\zeta;\mathfrak{r}-t,\vartheta_{-\mathfrak{r}}\omega,\v_{\mathfrak{r}-t})\|^{\frac43}_{\mathbb{W}^{-1,\frac43}}\d\zeta < +\infty.
	\end{align}
	Let us now estimate each term on the right hand side of \eqref{ep1}. Integration by parts, condition \eqref{third-grade-paremeters-res}, inequalities \eqref{Gen_lady-3}, \eqref{Gen_lady-4} and \eqref{Gag-Korn-ineq}, and H\"older's and Young's inequalities help us to obtain
	\begin{align}
		I_{1}(\mathfrak{k},t)
		&= -\frac{\nu}{2} \int_{\O} \boldsymbol{\wp}^2\left(\mathfrak{k}^{-2}|x|^2\right) |\Arm(\v)|^2  \d x - \frac{4\nu}{\mathfrak{k}^2} \int\limits_{\O}\boldsymbol{\wp}\left(\mathfrak{k}^{-2}|x|^2\right) \boldsymbol{\wp}'\left(\mathfrak{k}^{-2}|x|^2\right)
		[(x\cdot\nabla) \v\cdot\v+(\v\cdot\nabla) \v\cdot x]\d x\nonumber\\
		&\leq -\frac{\nu}{2} \int_{\O}\boldsymbol{\wp}^2\left(\mathfrak{k}^{-2}|x|^2\right)  |\Arm(\v)|^2   \d x+  \frac{4\sqrt{2}\nu}{\mathfrak{k}} \int\limits_{\O\cap\{\mathfrak{k}\leq|x|\leq \sqrt{2}\mathfrak{k}\}} \left|\boldsymbol{\wp}'\left(\mathfrak{k}^{-2}|x|^2\right)\right| \left|\v\right| \left|\nabla \v\right| \d x\nonumber\\&\leq -\frac{\nu}{2} \int_{\O}\boldsymbol{\wp}^2\left(\mathfrak{k}^{-2}|x|^2\right)  |\Arm(\v)|^2    \d x+  \frac{C}{\mathfrak{k}} \int_{\O}\left|\v\right| \left|\nabla \v\right| \d x
		\nonumber\\ & \leq  - \frac{\nu}{2} \int_{\O} \boldsymbol{\wp}^2\left(\mathfrak{k}^{-2}|x|^2\right)  |\Arm(\v)|^2    \d x +   \frac{C}{\mathfrak{k}} \left(\|\v\|^2_{2}+\|\Arm(\v)\|^2_{2}\right),\label{ep2}
		\\  
		\left|I_{2}(\mathfrak{k},t)\right|&= \z^{-1}(t,\omega)\left|\frac{2}{\mathfrak{k}^2}\int_{\O} \boldsymbol{\wp}\left(\mathfrak{k}^{-2}|x|^2\right)\boldsymbol{\wp}'\left(\mathfrak{k}^{-2}|x|^2\right)x\cdot\v |\v|^2 \d x\right|\nonumber\\&= \z^{-1}(t,\omega) \left|\frac{2}{\mathfrak{k}^2} \int\limits_{\O\cap\{\mathfrak{k}\leq|x|\leq \sqrt{2}\mathfrak{k}\}}\boldsymbol{\wp}\left(\mathfrak{k}^{-2}|x|^2\right) \boldsymbol{\wp}'\left(\mathfrak{k}^{-2}|x|^2\right)x\cdot\v |\v|^2 \d x\right|\nonumber\\&\leq  \z^{-1}(t,\omega)\frac{2\sqrt{2}}{\mathfrak{k}} \int\limits_{\O\cap\{\mathfrak{k}\leq|x|\leq \sqrt{2}\mathfrak{k}\}} \left|\boldsymbol{\wp}'\left(\mathfrak{k}^{-2}|x|^2\right)\right| |\v|^3 \d x 
		\nonumber\\ &  \leq \frac{C}{\mathfrak{k}} \z^{-1}(t,\omega)\|\v\|^3_{ 3} \leq \frac{C}{\mathfrak{k}} \z^{-1}(t,\omega)\|\Arm(\v)\|^{\frac{2d}{4+d}}_{4} \|\v\|_2^{\frac{12+d}{4+d}} 
		\nonumber\\ & =   \frac{C}{\mathfrak{k}} \z^{-1+\frac{d}{4+d}+\frac{12+d}{2(4+d)}}(t,\omega) \z^{-\frac{d}{4+d}}(t,\omega)\|\Arm(\v)\|^{\frac{2d}{4+d}}_{4}  \z^{-\frac{12+d}{2(4+d)}}(t,\omega) \|\v\|_2^{\frac{12+d}{4+d}}
		\nonumber\\ & =   \frac{C}{\mathfrak{k}} \z^{\frac12}(t,\omega) \z^{-\frac{d}{4+d}}(t,\omega)\|\Arm(\v)\|^{\frac{2d}{4+d}}_{4}  \z^{-\frac{12+d}{2(4+d)}}(t,\omega) \|\v\|_2^{\frac{12+d}{4+d}}
		\nonumber\\ & \leq    \frac{C}{\mathfrak{k}} \left[\z^{2}(t,\omega) + \z^{-2}(t,\omega)\|\Arm(\v)\|^{4}_{4}  +  \z^{-2}(t,\omega) \|\v\|_2^{4}\right],\label{ep3}
		\\
		I_{4}(\mathfrak{k},t) & = -	\frac{\beta}{2} \z^{-2}(t,\omega)  \int_{\O} \boldsymbol{\wp}^2\left(\mathfrak{k}^{-2}|x|^2\right)|\Arm(\v)|^4 \d x 
		\nonumber\\ & \quad - \frac{4\beta}{\mathfrak{k}^2} \z^{-2}(t,\omega) \int_{\O} \boldsymbol{\wp}\left(\mathfrak{k}^{-2}|x|^2\right)\boldsymbol{\wp}^{\prime}\left(\mathfrak{k}^{-2}|x|^2\right) [|\Arm(\v)|^2\Arm(\v): (x^{T} \v)] \d x
		\nonumber\\ & = -	\frac{\beta}{2} \z^{-2}(t,\omega)  \int_{\O} \boldsymbol{\wp}^2\left(\mathfrak{k}^{-2}|x|^2\right)|\Arm(\v)|^4 \d x 
		\nonumber\\ & \quad - \frac{4\beta}{\mathfrak{k}^2} \z^{-2}(t,\omega) \int\limits_{\O\cap\{\mathfrak{k}\leq|x|\leq \sqrt{2}\mathfrak{k}\}} \boldsymbol{\wp}^{\prime}\left(\mathfrak{k}^{-2}|x|^2\right) [|\Arm(\v)|^2\Arm(\v): (x^{T} \v)] \d x
		\nonumber\\ & \leq -	\frac{\beta}{2} \z^{-2}(t,\omega)  \int_{\O} \boldsymbol{\wp}^2\left(\mathfrak{k}^{-2}|x|^2\right)|\Arm(\v)|^4 \d x + \frac{C}{\mathfrak{k}} \z^{-2}(t,\omega) \int\limits_{\O\cap\{\mathfrak{k}\leq|x|\leq \sqrt{2}\mathfrak{k}\}}  |\Arm(\v)|^3 |\v| \d x
		\nonumber\\ & \leq -	\frac{\beta}{2}  \z^{-2}(t,\omega) \int_{\O} \boldsymbol{\wp}^2\left(\mathfrak{k}^{-2}|x|^2\right)|\Arm(\v)|^4 \d x + \frac{C}{\mathfrak{k}} \z^{-2}(t,\omega)\|\Arm(\v)\|^3_{4}\|\v\|_{4}
		\nonumber\\ & \leq -	\frac{\beta}{2} \z^{-2}(t,\omega) \int_{\O} \boldsymbol{\wp}^2\left(\mathfrak{k}^{-2}|x|^2\right)|\Arm(\v)|^4 \d x + \frac{C}{\mathfrak{k}} \z^{-2}(t,\omega) \left[\|\Arm(\v)\|^4_{4}+\|\v\|^4_{4}\right]
		\nonumber\\ & \leq -	\frac{\beta}{2} \z^{-2}(t,\omega) \int_{\O} \boldsymbol{\wp}^2\left(\mathfrak{k}^{-2}|x|^2\right)|\Arm(\v)|^4 \d x + \frac{C}{\mathfrak{k}} \z^{-2}(t,\omega) \left[\|\Arm(\v)\|^4_{4}+\|\v\|^4_{2}\right],\label{ep4}
		\\
		|I_{3}(\mathfrak{k},t)| & = \z^{-1}(t,\omega)\left|	\frac{\alpha}{2}  \int_{\O} \boldsymbol{\wp}^2\left(\mathfrak{k}^{-2}|x|^2\right)[(\Arm(\v))^2: \Arm(\v)]  \d x + \frac{4\alpha}{\mathfrak{k}^2} \int_{\O} \boldsymbol{\wp}\left(\mathfrak{k}^{-2}|x|^2\right)\boldsymbol{\wp}^{\prime}\left(\mathfrak{k}^{-2}|x|^2\right) (\Arm(\v))^2: (x^{T} \v) \d x\right|
		\nonumber\\ & \leq  \frac{\nu(1-\varepsilon_0)}{4} \int_{\O} \boldsymbol{\wp}^2\left(\mathfrak{k}^{-2}|x|^2\right)|\Arm(\v)|^2   \d x  + \frac{\beta(1-\varepsilon_0)}{2} \z^{-2}(t,\omega)  \int_{\O} \boldsymbol{\wp}^2\left(\mathfrak{k}^{-2}|x|^2\right)|\Arm(\v)|^4 \d x
		\nonumber\\ & \quad + \frac{C}{\mathfrak{k}} \left[ \z^{-2}(t,\omega)\|\Arm(\v)\|^4_{4}+\|\v\|^2_{2}\right],\label{ep5}
		\\
		|I_5(\mathfrak{k},t)|&\leq \left|\z(t,\omega) \left< \nabla \textbf{P}_1, \boldsymbol{\wp}^2\left(\mathfrak{k}^{-2}|x|^2\right)\v\right>\right| + \left|\z(t,\omega) \left< \nabla \textbf{P}_2, \boldsymbol{\wp}^2\left(\mathfrak{k}^{-2}|x|^2\right)\v\right>\right|
		\nonumber\\ 
		&\leq \z(t,\omega) \|\nabla \textbf{P}_1\|_{\H^{-1}} \left\|\boldsymbol{\wp}^2\left(\mathfrak{k}^{-2}|x|^2\right)\v\right\|_{\H^1} + \z(t,\omega) \|\nabla \textbf{P}_2\|_{\mathbb{W}^{-1,\frac{4}{3}}} \left\|\boldsymbol{\wp}^2\left(\mathfrak{k}^{-2}|x|^2\right)\v\right\|_{\mathbb{W}^{1,4}} 
		\nonumber\\ 
		&\leq \z(t,\omega) \|\nabla \textbf{P}_1\|_{\H^{-1}} \left(\left\|\boldsymbol{\wp}^2\left(\mathfrak{k}^{-2}|x|^2\right)\v\right\|_{2}^2 + \left\|\nabla\left[\boldsymbol{\wp}^2\left(\mathfrak{k}^{-2}|x|^2\right)\v\right]\right\|_{2}^2 \right)^{\frac12} 
		\nonumber\\ & \quad + \z(t,\omega) \|\nabla \textbf{P}_2\|_{\mathbb{W}^{-1,\frac{4}{3}}} \left(\left\|\boldsymbol{\wp}^2\left(\mathfrak{k}^{-2}|x|^2\right)\v\right\|_{4}^4 + \left\|\nabla\left[\boldsymbol{\wp}^2\left(\mathfrak{k}^{-2}|x|^2\right)\v\right]\right\|_{4}^4 \right)^{\frac14}
		\nonumber\\ 
		&\leq  C \z(t,\omega) \|\nabla \textbf{P}_1\|_{\H^{-1}} \left(\int_{\O\cap\{\mathfrak{k}\leq|x|\}}|\v|^2\d x +\int_{\O\cap\{\mathfrak{k}\leq|x|\}}|\Arm(\v)|^2\d x \right)^{\frac12} 
		\nonumber\\ & \quad + C \z(t,\omega) \|\nabla \textbf{P}_2\|_{\mathbb{W}^{-1,\frac{4}{3}}} \left(\left\{\int_{\O\cap\{\mathfrak{k}\leq|x|\}}|\v|^2\d x\right\}^2 +\int_{\O\cap\{\mathfrak{k}\leq|x|\}}|\Arm(\v)|^4\d x \right)^{\frac14},\label{ep6}
		\\
		|I_{6}(\mathfrak{k},t)|&\leq \frac{\sigma^2}{4}\int_{\O}\boldsymbol{\wp}^2\left(\mathfrak{k}^{-2}|x|^2\right)|\v|^2\d x + C \z^{2}(t,\omega) \int_{\O}\boldsymbol{\wp}^2\left(\mathfrak{k}^{-2}|x|^2\right)|\f|^2\d x.\label{ep7}
	\end{align}
	A combination of \eqref{ep1}-\eqref{ep7} yields
	\begin{align}\label{ep8}
		&	 \frac{\d}{\d t}\int_{\O}\boldsymbol{\wp}^2\left(\mathfrak{k}^{-2}|x|^2\right)|\v|^2\d x  +\left[\frac{\sigma^2}{2}-2\sigma y(\vartheta_{t}\omega)\right]\int_{\O}\boldsymbol{\wp}^2\left(\mathfrak{k}^{-2}|x|^2\right)|\v|^2\d x
		\nonumber\\ & \leq   C\int_{\O}\boldsymbol{\wp}^2\left(\mathfrak{k}^{-2}|x|^2\right)|\f|^2\d x+ \frac{C}{\mathfrak{k}} \left[\|\v\|^2_{2}   + \|\Arm(\v)\|^2_{2} + \z^{2}(t,\omega) + \z^{-2}(t,\omega)\|\Arm(\v)\|^{4}_{4}  +  \z^{-2}(t,\omega) \|\v\|_2^{4}  \right]
		\nonumber\\ & \quad C \z(t,\omega) \|\nabla \textbf{P}_1\|_{\H^{-1}} \left(\int_{\O\cap\{\mathfrak{k}\leq|x|\}}|\v|^2\d x +\int_{\O\cap\{\mathfrak{k}\leq|x|\}}|\Arm(\v)|^2\d x \right)^{\frac12} 
		\nonumber\\ & \quad + C \z(t,\omega) \|\nabla \textbf{P}_2\|_{\mathbb{W}^{-1,\frac{4}{3}}} \left(\left\{\int_{\O\cap\{\mathfrak{k}\leq|x|\}}|\v|^2\d x\right\}^2 +\int_{\O\cap\{\mathfrak{k}\leq|x|\}}|\Arm(\v)|^4\d x \right)^{\frac14}.
	\end{align}
	Finally, an application of variation of constants formula to \eqref{ep8}, Lemmas \ref{LemmaUe} and \ref{LemmaUe-24}, bounds \eqref{estimate-pressure1}-\eqref{estimate-pressure2}, and Hypothesis \ref{Hyp-f-U} helps us to obtain that for all $s\in[\mathfrak{r}-1,\mathfrak{r}]$ and for all $t\geq \widetilde{\mathcal{T}}=\max\{\mathcal{T},\widehat{\mathcal{T}}\}$ 
	\begin{align}\label{ep9}
		&	\int_{\O}\boldsymbol{\wp}^2\left(\mathfrak{k}^{-2}|x|^2\right)|\v(s;\mathfrak{r}-t, \vartheta_{-\mathfrak{r}}\omega,\v_{\mathfrak{r}-t})|^2\d x  \to 0\text{ as } \mathfrak{k}\to\infty.
	\end{align}
	Hence, from \eqref{ep9}, we conclude that for any $\varepsilon>0$ and for any $\omega\in\Omega$ and $s\in[\mathfrak{r}-1,\mathfrak{r}]$, there exists a $\mathfrak{k}_0\in\N$ such that 
	\begin{align*}
		&	\int_{\O\cap\{\mathfrak{k}\leq|x|\}}|\v(s;\mathfrak{r}-t, \vartheta_{-\mathfrak{r}}\omega,\v_{\mathfrak{r}-t})|^2 \d x  \leq \varepsilon,
	\end{align*}
	for all $\mathfrak{k}\geq \mathfrak{k}_0$ and $t\geq \widetilde{\mathcal{T}}$. This completes the proof.
\end{proof}

The following lemma provides the $\mathfrak{D}$-pullback asymptotic compactness of $\Phi$ on unbounded domains using Lemma \ref{LR}.
\begin{lemma}\label{Asymptotic_UB_GS}
	Assume that condition \eqref{third-grade-paremeters-res} and Hypothesis \ref{Hyp-f-U} are satisfied. Then, for every $\mathfrak{r}\in \R,$ $\omega\in\Omega$, ${B}=\{B(\mathfrak{r},\omega):\mathfrak{r}\in \R, \;\omega\in\Omega\}\in \mathfrak{D}$, $t_m\to \infty$ and $\v_{0m}\in B(\mathfrak{r}-t_m, \vartheta_{-t_{m}}\omega)$, the sequence $\Phi(t_m,\mathfrak{r}-t_m,\vartheta_{-t_{m}}\omega,\v_{0m})$ or $\v(\mathfrak{r};\mathfrak{r}-t_m,\vartheta_{-\mathfrak{r}}\omega,\v_{0m})$ of solutions to system \eqref{STGF} has a convergent subsequence in $\H$.
\end{lemma}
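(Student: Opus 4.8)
The plan is to run Wang's uniform-tail-estimate scheme, assembling the local compactness of solutions (Lemma~\ref{D-convege}), the uniform tail bound (Lemma~\ref{LR}) and the continuous dependence on initial data (Lemma~\ref{Continuity}). Fix $\mathfrak{r}\in\R$, $\omega\in\Omega$, $B=\{B(\mathfrak{r},\omega)\}\in\mathfrak{D}$, $t_m\to\infty$ and $\v_{0m}\in B(\mathfrak{r}-t_m,\vartheta_{-t_m}\omega)$. Since $\v_{0m}$ lives at the time $\mathfrak{r}-t_m\to-\infty$ inside a set that is merely tempered (not bounded), the first step is to \emph{restart the evolution at the fixed time $\mathfrak{r}-1$}: applying Lemma~\ref{LemmaUe} with $s=\mathfrak{r}-1$ yields $\mathcal{T}=\mathcal{T}(\mathfrak{r},\omega,B)\geq 1$ such that, for all $t_m\geq\mathcal{T}$, the elements $\w_m:=\v(\mathfrak{r}-1;\mathfrak{r}-t_m,\vartheta_{-\mathfrak{r}}\omega,\v_{0m})$ form a bounded sequence in $\H$, while the cocycle identity for \eqref{CTGF} gives $\v(\cdot;\mathfrak{r}-t_m,\vartheta_{-\mathfrak{r}}\omega,\v_{0m})=\v(\cdot;\mathfrak{r}-1,\vartheta_{-\mathfrak{r}}\omega,\w_m)$ on $[\mathfrak{r}-1,\infty)$. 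This reduces the problem to one with a genuinely bounded sequence of initial data at a fixed initial time.

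Next, apply Lemma~\ref{D-convege} (with $\mathfrak{r}$ replaced by $\mathfrak{r}-1$, $\omega$ by $\vartheta_{-\mathfrak{r}}\omega$ and $T=1$) to the bounded sequence $\{\w_m\}$: along a subsequence, $\v(\cdot;\mathfrak{r}-1,\vartheta_{-\mathfrak{r}}\omega,\w_m)\to\tilde{\v}$ in $\mathrm{L}^2(\mathfrak{r}-1,\mathfrak{r};\L^2_{\mathrm{loc}}(\O))$, with $\tilde{\v}\in\mathrm{L}^\infty(\mathfrak{r}-1,\mathfrak{r};\H)\cap\mathrm{L}^2(\mathfrak{r}-1,\mathfrak{r};\V)\cap\mathrm{L}^4(\mathfrak{r}-1,\mathfrak{r};\mathbb{W}^{1,4}(\O))$. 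By a diagonal extraction over $\mathfrak{k}\in\N$ and a further subsequence, there is a full-measure set of times $s\in(\mathfrak{r}-1,\mathfrak{r})$ on which $\v(s;\mathfrak{r}-1,\vartheta_{-\mathfrak{r}}\omega,\w_m)\to\tilde{\v}(s)$ in $\L^2(\O_{\mathfrak{k}})$ for every $\mathfrak{k}$; fix one such interior time $s^\ast\in(\mathfrak{r}-1,\mathfrak{r})$, noting $\tilde{\v}(s^\ast)\in\H$ by the uniform $\H$-bound of Lemma~\ref{LemmaUe} and lower semicontinuity. I now upgrade this \emph{local} convergence at $s^\ast$ to \emph{global} strong convergence in $\H$: writing $\|\cdot\|_{\L^2(\O)}^2=\|\cdot\|_{\L^2(\O_{\mathfrak{k}})}^2+\|\cdot\|_{\L^2(\O^c_{\mathfrak{k}})}^2$, the inner part tends to $0$ as $m\to\infty$ for each fixed $\mathfrak{k}$, while the outer part is controlled uniformly in large $m$ by Lemma~\ref{LR} evaluated at $s=s^\ast\in[\mathfrak{r}-1,\mathfrak{r}]$ and $t_m\geq\widetilde{\mathcal{T}}$, and the corresponding tail of $\tilde{\v}(s^\ast)\in\H$ vanishes as $\mathfrak{k}\to\infty$. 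Choosing $\mathfrak{k}$ large, then sending $m\to\infty$, gives $\v(s^\ast;\mathfrak{r}-t_m,\vartheta_{-\mathfrak{r}}\omega,\v_{0m})\to\tilde{\v}(s^\ast)$ strongly in $\H$.

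Finally, since strong convergence of the ``new initial data'' at the fixed time $s^\ast$ is now available, the cocycle identity $\v(\mathfrak{r};\mathfrak{r}-t_m,\vartheta_{-\mathfrak{r}}\omega,\v_{0m})=\v\big(\mathfrak{r};s^\ast,\vartheta_{-\mathfrak{r}}\omega,\v(s^\ast;\mathfrak{r}-t_m,\vartheta_{-\mathfrak{r}}\omega,\v_{0m})\big)$ together with the continuous dependence on initial data from Lemma~\ref{Continuity} yields $\v(\mathfrak{r};\mathfrak{r}-t_m,\vartheta_{-\mathfrak{r}}\omega,\v_{0m})\to\v(\mathfrak{r};s^\ast,\vartheta_{-\mathfrak{r}}\omega,\tilde{\v}(s^\ast))$ strongly in $\H$, which is the required convergent subsequence (and, up to the fixed scalar factor $\z(\mathfrak{r},\vartheta_{-\mathfrak{r}}\omega)$, for $\Phi(t_m,\mathfrak{r}-t_m,\vartheta_{-t_m}\omega,\v_{0m})$ as well). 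The main obstacle is precisely the passage carried out in the second paragraph: because solutions of \eqref{CTGF} are \emph{not} known to be weakly continuous in the initial data (Remarks~\ref{rem-forcing-hypo} and~\ref{rem-weak-conti}), Ball's energy-equality method \cite{Ball} is unavailable and one cannot convert the weak $\H$-limit of $\{\v(\mathfrak{r};\cdot)\}$ into a strong one directly; the compactness must instead be routed through an interior time $s^\ast$ where Lemma~\ref{D-convege} supplies local strong convergence, and then the tails must be glued uniformly via Lemma~\ref{LR} — here it is essential that Lemma~\ref{LR} holds for \emph{every} $s\in[\mathfrak{r}-1,\mathfrak{r}]$ rather than only $s=\mathfrak{r}$ — before invoking the merely continuous solution map to reach time $\mathfrak{r}$.
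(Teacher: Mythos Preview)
Your proposal is correct and follows essentially the same approach as the paper: restart the evolution at $\mathfrak{r}-1$ via Lemma~\ref{LemmaUe} to get a bounded sequence, extract local strong convergence at an intermediate time $s^\ast\in(\mathfrak{r}-1,\mathfrak{r})$ via Lemma~\ref{D-convege}, upgrade to global $\H$-convergence at $s^\ast$ using the uniform tail estimate of Lemma~\ref{LR}, and then push to time $\mathfrak{r}$ via the continuous dependence on initial data from Lemma~\ref{Continuity}. The only cosmetic difference is that the paper applies the Lipschitz bound from the proof of Lemma~\ref{Continuity} first and then performs the inside/outside splitting of $\|\v(s^\ast;\cdot)-\tilde{\u}\|_2^2$, whereas you first assemble strong convergence at $s^\ast$ and then invoke continuity; the two orderings are equivalent.
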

\begin{proof}
	Lemma \ref{LemmaUe} implies that there exists $\mathcal{T}=\mathcal{T}(\mathfrak{r},\omega,{K})\geq 1$ such that for all $t\geq \mathcal{T}$ and $\v_{\ast}\in B(\mathfrak{r}-t,\vartheta_{-t}\omega)$,
	{
		\begin{align}\label{Uac1}
			\|\v(\mathfrak{r}-1;\mathfrak{r}-t,\vartheta_{-\mathfrak{r}}\omega,\v_{\ast})\|_2\leq C,
		\end{align}
		where $C$ is a positive constant independent of $t$ and  $\v_{\ast}.$}
	Since $t_m\to \infty$, there exists $M_1=M_1(\mathfrak{r},\omega,B)\in\N$ such that $t_m\geq \mathcal{T},$ for all $m\geq M_1$. Since $\v_{0m} \in B(\mathfrak{r}-t_m,\vartheta_{-t_m}\omega)$, \eqref{Uac1} implies that for all $m\geq M_1$, {the sequence 
		\begin{align}\label{Uac2}
			\{\v(\mathfrak{r}-1;\mathfrak{r}-t_m, \vartheta_{-\mathfrak{r}}\omega,\v_{0m})\}_{m\geq M_1} \subset\H
		\end{align}
		is bounded in $\H$.}
	We infer from \eqref{Uac2} and Lemma \ref{D-convege} that there exists $s\in(\mathfrak{r}-1,\mathfrak{r})$, $\tilde{\u}\in\H$ and a subsequence (not relabeling) such that for every $\mathfrak{k}\in\N$ 
	\begin{align}\label{Uac3}
		\v(s;\mathfrak{r}-t_m, \vartheta_{-\mathfrak{r}}\omega,\v_{0m})=\v(s;\mathfrak{r}-1, \vartheta_{-\mathfrak{r}}\omega,\v(\mathfrak{r}-1;\mathfrak{r}-t_m,\vartheta_{-\mathfrak{r}}\omega,\v_{0m}))\to \tilde{\u} \  \text{ in }\   \L^2(\O_\mathfrak{k}).
	\end{align}
	as $m\to\infty$. Therefore, we infer from {the proof of } Lemma \ref{Continuity} that there exists a positive constant $C_{Lip}$ such that
	\begin{align}\label{Uac7}
		&\|\v(\mathfrak{r};s,\v(s;\mathfrak{r}-t_m, \vartheta_{-\mathfrak{r}}\omega,\v_{0m}))-\v(\mathfrak{r};s, \vartheta_{-\mathfrak{r}},\tilde{\u})\|^2_{2}\leq C_{Lip}\|\v(s;\mathfrak{r}-t_m, \vartheta_{-\mathfrak{r}}\omega,\v_{0m})-\tilde{\u}\|^2_{2}.
	\end{align}
	Let us now choose $\eta>0$ and fix it. Since $\tilde{\u}\in\H$, there exists $K_1=K_1(\mathfrak{r},\omega,\eta)>0$ such that for all $\mathfrak{k}\geq K_1$, 
	\begin{align}\label{Uac4}
		\int_{\O\cap\{\mathfrak{k}\leq|x|\}}|\tilde{\u}|^2\d x<\frac{\eta^2}{6C_{Lip}},
	\end{align}
	where $C_{Lip}>0$ is a constant defined in \eqref{Uac7}. Also, we know from Lemma \ref{LR} that there exists $M_2=M_2(s,\omega,B,\eta)\in\N$ and $K_2=K_2(s,\omega,\eta)\geq K_1$ such that for all $m\geq M_2$ and $\mathfrak{k}\geq K_2$,
	\begin{align}\label{Uac5}
		\int_{\O\cap\{\mathfrak{k}\leq|x|\}}|\v(s;\mathfrak{r}-t_m,\vartheta_{-\mathfrak{r}}\omega,\v_{0m})|^2\d x<\frac{\eta^2}{6C_{Lip}}.
	\end{align}
	From \eqref{Uac3}, we have that there exists $M_3=M_3(\mathfrak{r},\omega,{B},\eta)>M_2$ such that for all $m\geq M_3$,
	\begin{align}\label{Uac6}
		\int_{\O\cap\{|x|< K_2\}}|\v(s;\mathfrak{r}-t_m,\vartheta_{-\mathfrak{r}}\omega,\v_{0m})-\tilde{\u}|^2\d x<\frac{\eta^2}{3C_{Lip}}.
	\end{align}
	Finally, we infer from \eqref{Uac7} that
	\begin{align}\label{Uac8}
		&\|\v(\mathfrak{r};s,\v(s;\mathfrak{r}-t_m,\vartheta_{-\mathfrak{r}}\omega,\v_{0m}))-\v(\mathfrak{r};s,\vartheta_{-\mathfrak{r}}\omega,\tilde{\u})\|^2_{2}\nonumber\\&\leq  C_{Lip}\left[\int\limits_{\O\cap\{|x|<K_2\}}|\v(s;\mathfrak{r}-t_m, \vartheta_{-\mathfrak{r}}\omega,\v_{0m})-\tilde{\u}|^2\d x+\int\limits_{\O\cap\{|x|\geq K_2\}} |\v(s;\mathfrak{r}-t_m, \vartheta_{-\mathfrak{r}}\omega,\v_{0m})-\tilde{\u}|^2\d x\right]\nonumber\\&\leq C_{Lip}\left[\int\limits_{\O\cap\{|x|<K_2\}}|\v(s;\mathfrak{r}-t_m,\vartheta_{-\mathfrak{r}}\omega,\v_{0m})-\tilde{\u}|^2\d x+2\int\limits_{\O\cap\{|x|\geq K_2\}}(|\v(s;\mathfrak{r}-t_m,\vartheta_{-\mathfrak{r}}\omega,\v_{0m})|^2+|\tilde{\u}|^2)\d x\right]
		\nonumber\\ & < \eta^2,
	\end{align}
	for every $m\geq M_3$, where we have used \eqref{Uac4}-\eqref{Uac6}. Since $\eta>0$ is arbitrary, we conclude the proof.
\end{proof}

\begin{theorem}\label{PRA_U}
	 Let the condition \eqref{third-grade-paremeters-res} and Hypothesis \ref{Hyp-f-U} be satisfied. Then, there exists a unique $\mathfrak{D}$-pullback random attractor $\mathscr{A}=\{\mathscr{A}(\mathfrak{r},\omega):\mathfrak{r}\in\R, \omega\in\Omega\}\in\mathfrak{D},$ for the  non-autonomous random dynamical system $\Phi$ associated with the system \eqref{STGF} in $\H$.
\end{theorem}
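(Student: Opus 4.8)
The plan is to obtain Theorem~\ref{PRA_U} as a direct application of the abstract existence theorem for pullback random attractors, \cite[Theorem 2.23]{SandN_Wang}, in exactly the same way that Theorem~\ref{PRA_B} was established on bounded domains. That criterion requires four things: that $\Phi$ is a continuous, $(\mathscr F,\mathscr B(\H))$-measurable non-autonomous random dynamical system on $\H$; that $\Phi$ possesses a $\mathfrak D$-pullback random absorbing set belonging to $\mathfrak D$; that $\Phi$ is $\mathfrak D$-pullback asymptotically compact in $\H$; and that the universe $\mathfrak D$ is neighbourhood closed. Granting these, the theorem delivers a unique $\mathfrak D$-pullback random attractor $\mathscr A=\{\mathscr A(\mathfrak r,\omega):\mathfrak r\in\R,\omega\in\Omega\}\in\mathfrak D$, compact and attracting in $\L^2(\O)$, which is precisely the assertion.

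First I would note that three of the four ingredients are already in hand and are insensitive to whether $\O$ is bounded. The neighbourhood-closedness of $\mathfrak D$ was recorded right after its definition. That $\Phi$ is a continuous non-autonomous random dynamical system which is $\mathscr F$-measurable follows from Lemma~\ref{lem-Sol-CTGF} (global well-posedness of the conjugated system \eqref{CTGF}), Lemma~\ref{LusinC} (Lusin-type continuity in $\omega$, which upgrades to $\mathscr F$-measurability of $\Phi$) and Lemma~\ref{Continuity} (continuity in the initial datum); none of those arguments used boundedness of $\O$. For the absorbing set I would invoke Lemma~\ref{PAS}: that lemma is stated under Hypothesis~\ref{Hyp-f-B}, but Hypothesis~\ref{Hyp-f-U} is strictly stronger, since the continuous embedding $\L^2(\O)\hookrightarrow\H^{-1}(\O)$ gives $\|\f\|_{\H^{-1}}\le C\|\f\|_{2}$, so the $\mathfrak D$-pullback random absorbing set $\mathcal M=\{\mathcal M(\mathfrak r,\omega)\}\in\mathfrak D$ constructed there is automatically available here.

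The one substantive ingredient is $\mathfrak D$-pullback asymptotic compactness, and this is exactly Lemma~\ref{Asymptotic_UB_GS}: for any $B\in\mathfrak D$, $t_m\to\infty$ and $\v_{0m}\in B(\mathfrak r-t_m,\vartheta_{-t_m}\omega)$, the sequence $\v(\mathfrak r;\mathfrak r-t_m,\vartheta_{-\mathfrak r}\omega,\v_{0m})$ has a subsequence convergent in $\H$. On an unbounded $\O$ the embedding $\V\hookrightarrow\H$ is no longer compact, so Lemma~\ref{Asymptotic_UB_GS} instead splits the $\L^2$-mass over a large ball $\O_{\mathfrak k}$ and its complement $\O_{\mathfrak k}^{c}$: on $\O_{\mathfrak k}$ one uses the local strong convergence of Lemma~\ref{D-convege} (an Aubin--Lions argument on bounded subdomains), while on $\O_{\mathfrak k}^{c}$ one uses the uniform tail estimate of Lemma~\ref{LR}, together with the Lipschitz-type stability bound extracted from the proof of Lemma~\ref{Continuity}. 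Hence, once Lemmas~\ref{PAS} and \ref{Asymptotic_UB_GS} are invoked, the proof of Theorem~\ref{PRA_U} itself is a single line: apply \cite[Theorem 2.23]{SandN_Wang}.

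Accordingly, I expect the genuine difficulty to sit entirely inside the estimates supporting Lemma~\ref{Asymptotic_UB_GS}, namely the uniform tail estimate Lemma~\ref{LR}, the auxiliary energy bounds Lemma~\ref{LemmaUe-24}, and the pressure bounds \eqref{estimate-pressure1}--\eqref{estimate-pressure2}. These are forced on us because solutions of \eqref{equationV1} lack weak continuity in the initial data, which rules out Ball's energy-equality route to asymptotic compactness and in turn is why the unbounded-domain result requires the stronger $\L^2$-forcing Hypothesis~\ref{Hyp-f-U} rather than the $\H^{-1}$-Hypothesis~\ref{Hyp-f-B} used on bounded domains (cf. Remarks~\ref{rem-forcing-hypo} and \ref{rem-weak-conti}). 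Relative to the theorem stated here, however, all of that is preparatory; the theorem's proof proper is just the verification that the abstract framework of \cite{SandN_Wang} applies.
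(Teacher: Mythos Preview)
Your proposal is correct and matches the paper's own proof essentially verbatim: the paper proves Theorem~\ref{PRA_U} in a single line by invoking Lemma~\ref{PAS} (absorbing set), Lemma~\ref{Asymptotic_UB_GS} ($\mathfrak{D}$-pullback asymptotic compactness) and \cite[Theorem 2.23]{SandN_Wang}. Your additional remarks about why Hypothesis~\ref{Hyp-f-U} implies Hypothesis~\ref{Hyp-f-B} and where the real work sits (Lemmas~\ref{LR}, \ref{LemmaUe-24}, \ref{D-convege}) are accurate context but go beyond what the theorem's proof itself requires.
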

\begin{proof}
	The proof follows from Lemma \ref{Asymptotic_UB_GS} ($\mathfrak{D}$-pullback asymptotic compactness of $\Phi$), Lemma \ref{PAS} (existence of $\mathfrak{D}$-pullback random absorbing set) and the abstract theory given in \cite[Theorem 2.23]{SandN_Wang}.
\end{proof}

\section{Invariant Measures and Ergodicity}\label{Sec6}\setcounter{equation}{0}

In this section, we establish the existence and uniqueness of invariant measures, as well as the ergodicity, for the random dynamical system $\Phi$ associated with \eqref{STGF}. Since our analysis relies on the abstract framework developed in \cite{CF}, we restrict attention to the case where the deterministic forcing term is independent of time. Henceforth, we assume that $\f$ is autonomous and that $\f \in \H^{-1}(\O)$ when $\O \subset \R^{d}$ is a bounded domain, while $\f \in \L^{2}(\O)$ when $\O \subseteq \R^{d}$ is unbounded.

The results in \cite{CF} show that if a random dynamical system $\Phi$ possesses a compact invariant random set, then $\Phi$ admits an invariant measure (see \cite[Corollaries 4.4 and 4.6]{CF}). Therefore, by \cite[Corollaries 4.4 and 4.6]{CF} together with Theorems~\ref{PRA_B} (for bounded domains) and~\ref{PRA_U} (for unbounded domains), we immediately obtain the existence of invariant measures for the autonomous system \eqref{STGF}, since its random attractor serves as a compact invariant random set.

\subsection{Existence}
Since the deterministic forcing term in the system \eqref{STGF} is time-independent, the non-autonomous random dynamical system will reduce to autonomous random dynamical system. Therefore, the random dynamical system $\Phi:\R^+\times\Omega\times\H\to\H$, for $t\geq\mathfrak{r}$, $\omega\in\Omega$ and $\u_\mathfrak{r}\in\H$, is defined by
\begin{align}\label{Phi2}
	\Phi(t-\mathfrak{r},\vartheta_{\mathfrak{r}}\omega,\u_{\mathfrak{r}}) :=\u(t;\mathfrak{r},\omega,\u_{\mathfrak{r}})=\frac{\v(t;\mathfrak{r},\omega,\v_{\mathfrak{r}})}{\z(t,\omega)} \ \text{ with }\  \v_{\mathfrak{r}}=\z(\mathfrak{r},\omega)\u_{\mathfrak{r}}.
\end{align}
For a Banach space $\X$, let $\mathscr{B}_b(\X)$ be the space of all bounded and Borel measurable functions on $\X$, and $\C_b(\X)$ be the space of all bounded and continuous functions on $\X$. Let us define the transition operator $\{\mathrm{T}_t\}_{t\geq 0}$ by 
\begin{align}\label{71}
	\mathrm{T}_t g(\x)=\int_{\Omega}g(\Phi(t,\omega,\x))\d\mathbb{P}(\omega)=\E\left[g(\Phi(t,\cdot,\x))\right],
\end{align}
for all $g\in\mathscr{B}_b(\H)$, where $\Phi$ is the random dynamical system corresponding to system \eqref{STGF} defined by \eqref{Phi2}. Since $\Phi$ is continuous (Lemma \ref{Continuity}), the following result holds due to \cite[Proposition 3.8]{BL}. 
\begin{lemma}\label{Feller}
	The family $\{\mathrm{T}_t\}_{t\geq 0}$ is Feller, that is, $\mathrm{T}_tg\in\C_{b}(\H)$ if $g\in\C_b(\H)$. Moreover, for any $g\in\C_b(\H)$, $\mathrm{T}_tg(\x)\to g(\x)$ as $t\downarrow 0$. 
\end{lemma}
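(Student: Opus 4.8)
The plan is to deduce both assertions from the pathwise continuity of $\Phi$ established in Lemma~\ref{Continuity}, together with the dominated convergence theorem, following the argument of \cite[Proposition~3.8]{BL}.

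First I would verify that $\mathrm{T}_t$ maps $\C_{b}(\H)$ into itself. Boundedness is immediate from \eqref{71}, since $|\mathrm{T}_tg(\x)|\le\E\big[|g(\Phi(t,\cdot,\x))|\big]\le\sup_{\H}|g|$ for every $\x\in\H$. For continuity, fix $t\ge0$ and take $\x_n\to\x$ in $\H$. Undoing the change of variables \eqref{COV} with $\mathfrak{r}=0$, for each fixed $\omega\in\Omega$ one has $\Phi(t,\omega,\x_n)=\z^{-1}(t,\omega)\,\v(t;0,\omega,\z(0,\omega)\x_n)$ and similarly with $\x_n$ replaced by $\x$, where $\z^{\pm1}(t,\omega)=e^{\mp\sigma y(\vartheta_t\omega)}$ are fixed finite positive numbers; since $\z(0,\omega)\x_n\to\z(0,\omega)\x$ in $\H$, Lemma~\ref{Continuity} gives $\Phi(t,\omega,\x_n)\to\Phi(t,\omega,\x)$ in $\H$ for every $\omega$. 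Because $g$ is continuous, $g(\Phi(t,\omega,\x_n))\to g(\Phi(t,\omega,\x))$ pointwise in $\omega$, and these random variables are uniformly bounded by $\sup_{\H}|g|$; dominated convergence then yields $\mathrm{T}_tg(\x_n)\to\mathrm{T}_tg(\x)$. Hence $\mathrm{T}_tg\in\C_{b}(\H)$, i.e.\ $\{\mathrm{T}_t\}_{t\ge0}$ is Feller.

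Next I would handle the short-time limit. Fix $\x\in\H$ and $\omega\in\Omega$. By Lemma~\ref{lem-Sol-CTGF} the map $t\mapsto\v(t;0,\omega,\z(0,\omega)\x)$ is continuous from $[0,\infty)$ into $\H$, and $t\mapsto\z(t,\omega)$ is continuous and strictly positive; therefore $t\mapsto\Phi(t,\omega,\x)=\z^{-1}(t,\omega)\v(t;0,\omega,\z(0,\omega)\x)$ is continuous on $[0,\infty)$ with $\Phi(0,\omega,\x)=\x$, so $\Phi(t,\omega,\x)\to\x$ in $\H$ as $t\downarrow0$. Continuity of $g$ then gives $g(\Phi(t,\omega,\x))\to g(\x)$ for every $\omega$, and using the uniform bound $\sup_{\H}|g|$ and dominated convergence in \eqref{71} once more, I would conclude $\mathrm{T}_tg(\x)=\E\big[g(\Phi(t,\cdot,\x))\big]\to g(\x)$ as $t\downarrow0$.

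I do not expect a genuine obstacle here: the result is a formal consequence of Lemma~\ref{Continuity} (continuity in the initial datum), Lemma~\ref{lem-Sol-CTGF} (continuity of trajectories in time), and the $(\mathscr{F},\mathscr{B}(\H))$-measurability of $\omega\mapsto\Phi(t,\omega,\x)$ derived from Lemma~\ref{LusinC}, the last being precisely what legitimizes the applications of dominated convergence. The only point demanding a little care is tracking the $\omega$-dependent Doss--Sussmann factor $\z(t,\omega)$ when passing between the auxiliary equation \eqref{CTGF} and the random dynamical system $\Phi$; since for fixed $\omega$ this factor is a deterministic, continuous, strictly positive function of $t$, the transfer is harmless.
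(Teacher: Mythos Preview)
Your proposal is correct and follows exactly the approach the paper takes: the paper simply cites \cite[Proposition~3.8]{BL} together with the continuity of $\Phi$ from Lemma~\ref{Continuity}, and your argument is precisely the standard unpacking of that proposition via pathwise continuity in the initial datum, continuity of trajectories in time, and dominated convergence.
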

\begin{definition}
	A Borel probability measure $\upmu$ on $\H$  is called an \emph{invariant measure} for a Markov semigroup $\{\mathrm{T}_t\}_{t\geq 0}$ of Feller operators on $\C_b(\H)$ if and only if $$\mathrm{T}_{t}^*\upmu=\upmu, \ t\geq 0,$$ where $(\mathrm{T}_{t}^*\upmu)(\Gamma)=\int_{\H}\mathrm{P}_{t}(\y,\Gamma)\upmu(\d\y),$ for $\Gamma\in\mathscr{B}(\H)$ and  $\mathrm{T}_t(\y,\cdot)$ is the transition probability, $\mathrm{T}_{t}(\y,\Gamma)=\mathrm{T}_{t}(\chi_{\Gamma})(\y),\ \y\in\H$.
\end{definition}

It can be shown that $\Phi$ is a Markov random dynamical system (see \cite[Theorem 5.6]{CF}), meaning that $\mathrm{T}_{s_1+s_2} = \mathrm{T}_{s_1}\mathrm{T}_{s_2}$ holds for all $s_1, s_2 \ge 0$. Moreover, \cite[Corollary 4.6]{CF} asserts that any Markov random dynamical system defined on a Polish space and possessing an invariant compact random set admits a Feller invariant probability measure. Therefore, by combining this abstract result with Theorems~\ref{PRA_B} (in the case of bounded domains) and~\ref{PRA_U} (for unbounded domains), we obtain the following conclusion.

\begin{theorem}\label{thm6.3}
	Assume that $\f \in \H^{-1}(\O)$ when $\O \subset \R^{d}$ is a bounded domain, while $\f \in \L^{2}(\O)$ when $\O \subseteq \R^{d}$ is unbounded. Then, the Markov semigroup $\{\mathrm{T}_t\}_{t\geq 0}$ induced by $\Phi$ on $\H$ has an invariant measure $\upmu$.
\end{theorem}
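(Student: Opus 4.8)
The plan is to deduce the existence of an invariant measure directly from the existence of a pullback random attractor together with the abstract machinery of Crauel and Flandoli \cite{CF}, rather than constructing $\upmu$ by hand via Krylov--Bogoliubov-type tightness arguments. Since the deterministic forcing $\f$ is now time-independent, the first step is to record that the non-autonomous random dynamical system $\Phi$ built in Subsection~\ref{NRDS} specializes to a genuine autonomous random dynamical system over the metric dynamical system $(\Omega,\mathscr{F},\mathbb{P},\{\vartheta_t\}_{t\in\R})$, with cocycle $\Phi:\R^+\times\Omega\times\H\to\H$ as in \eqref{Phi2}; in particular, by \cite[Theorem~5.6]{CF} the transition family $\{\mathrm{T}_t\}_{t\geq 0}$ defined in \eqref{71} is a Markov semigroup, i.e. $\mathrm{T}_{s_1+s_2}=\mathrm{T}_{s_1}\mathrm{T}_{s_2}$ for all $s_1,s_2\geq 0$. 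I would check explicitly that the cocycle identity, the $(\mathscr{F},\mathscr{B}(\H))$-measurability, and the continuity of $\Phi$ (Lemmas~\ref{Continuity} and~\ref{Feller}) all persist under this reduction.

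Next I would invoke Theorem~\ref{PRA_B} in the bounded-domain case (with $\f\in\H^{-1}(\O)$, noting that a constant forcing trivially satisfies Hypothesis~\ref{Hyp-f-B}) and Theorem~\ref{PRA_U} in the unbounded case (with $\f\in\L^2(\O)$, satisfying Hypothesis~\ref{Hyp-f-U}) to obtain the $\mathfrak{D}$-pullback random attractor $\mathscr{A}=\{\mathscr{A}(\omega):\omega\in\Omega\}$. The structural features I need from these theorems are: (i) $\mathscr{A}(\omega)$ is nonempty and compact in $\H$ for $\mathbb{P}$-a.e.\ $\omega$; (ii) $\mathscr{A}$ is a measurable random closed set; and (iii) $\mathscr{A}$ is strictly $\Phi$-invariant, namely $\Phi(t,\omega,\mathscr{A}(\omega))=\mathscr{A}(\vartheta_t\omega)$ for all $t\geq 0$ and $\mathbb{P}$-a.e.\ $\omega$. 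Taken together, these say precisely that $\mathscr{A}$ is a compact invariant random set in the sense of \cite{CF}.

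The conclusion is then immediate. Since $\H$ is a separable Hilbert space, hence a Polish space, $\Phi$ is a Markov random dynamical system on $\H$ admitting the compact invariant random set $\mathscr{A}$; therefore \cite[Corollaries~4.4 and~4.6]{CF} provide a Feller invariant probability measure $\upmu$ on $(\H,\mathscr{B}(\H))$ for $\{\mathrm{T}_t\}_{t\geq 0}$, concretely obtained by averaging over $\omega$ the disintegration of an invariant Markov measure supported on $\mathscr{A}(\omega)$. The Feller property of $\{\mathrm{T}_t\}_{t\geq 0}$, required for the notion of invariant measure used in the Definition preceding the statement, is exactly Lemma~\ref{Feller}, so the hypotheses of \cite[Corollary~4.6]{CF} are met.

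I do not expect a genuine analytic obstacle here: all of the hard work (well-posedness of \eqref{CTGF}, the pullback absorbing set, and the pullback asymptotic compactness on bounded and unbounded domains) has already been carried out in Sections~\ref{Sec3}--\ref{Sec5}. The only points demanding care are bookkeeping ones: confirming that the random attractor delivered by Theorems~\ref{PRA_B}--\ref{PRA_U} is a \emph{compact invariant} random set in the exact sense required by \cite[Corollary~4.6]{CF}, and verifying that the reduction from the non-autonomous framework to the autonomous one is legitimate once $\f$ is time-independent. Both are routine, but I would state them explicitly so that the invocation of \cite{CF} is fully justified.
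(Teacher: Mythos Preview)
Your proposal is correct and follows essentially the same approach as the paper: reduce to the autonomous setting, note that $\Phi$ is a Markov random dynamical system via \cite[Theorem~5.6]{CF}, use the random attractor from Theorems~\ref{PRA_B} and~\ref{PRA_U} as the compact invariant random set, and then invoke \cite[Corollary~4.6]{CF}. The paper's argument is more terse, but the logical structure is identical.
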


\subsection{Uniqueness} 
In this section, we show that the invariant measure for the system \eqref{STGF} is unique, relying on the linear structure of the multiplicative noise and the exponential stability exhibited by the system’s solutions.
 To this end, we set the external forcing to zero, that is, we take  $\f=\boldsymbol{0}$ in \eqref{STGF}.

\begin{lemma}\label{ExpoStability}
	Assume that \emph{$\f=\textbf{0}$}. Then, there exists $T(\omega)>0$ such that the solution of the system \eqref{STGF} satisfies the following exponential estimate:
	\begin{align}\label{U1}
		\|\u_1(t)-\u_2(t)\|^2_{2}\leq e^{-\frac{\sigma^2}{4}t}\|\u_{1,0}-\u_{2,0}\|^2_{2},  \ \  \text{ for all }\  t\geq T(\omega),
	\end{align}
	where $\u_1(t):=\u_{1}(t;0,\omega,\u_{1,0})$ and $\u_{2}(t):=\u_{2}(t;0,\omega,\u_{2,0})$ be two solutions of the system \eqref{STGF} with respect to the initial data $\u_{1,0}$ and $\u_{2,0}$ at $0$, respectively. 
\end{lemma}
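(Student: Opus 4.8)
The plan is to work with the transformed, pathwise equation \eqref{CTGF} with $\f=\boldsymbol 0$ rather than directly with \eqref{STGF}, since the multiplicative noise has been absorbed into the coefficients. Writing $\v_i(t)=\v_i(t;0,\omega,\v_{i,0})$ for the two solutions of \eqref{CTGF} with $\v_{i,0}=\z(0,\omega)\u_{i,0}$, and setting $\yfrak=\v_1-\v_2$, the difference satisfies exactly the equation \eqref{Conti1} already derived in the proof of Lemma~\ref{Continuity}. From there one repeats the energy estimate \eqref{Conti2}--\eqref{Conti6}, which with $\varepsilon_0=1-\sqrt{\alpha^2/(2\beta\nu)}\in(0,1)$ gives
\begin{align*}
	\frac{\d}{\d t}\|\yfrak\|_2^2 + \nu\varepsilon_0\|\nabla\yfrak\|_2^2 + \sigma^2\|\yfrak\|_2^2 \leq 2\sigma|y(\vartheta_{t}\omega)|\,\|\yfrak\|_2^2 + \frac{(\M_d)^2}{\nu\varepsilon_0}\z^{-2}(t,\omega)\|\Arm(\v_1)\|_4^2\,\|\yfrak\|_2^2,
\end{align*}
so that by Gronwall's inequality
\begin{align*}
	\|\yfrak(t)\|_2^2 \leq \|\yfrak(0)\|_2^2\,\exp\left\{-\sigma^2 t + 2\sigma\int_0^t|y(\vartheta_s\omega)|\d s + \frac{(\M_d)^2}{\nu\varepsilon_0}\int_0^t \z^{-2}(s,\omega)\|\Arm(\v_1(s))\|_4^2\d s\right\}.
\end{align*}

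The crucial point is to control the two integrals in the exponent. For the first, \eqref{Z3} gives $\frac1t\int_0^t y(\vartheta_s\omega)\d s\to 0$, hence $\frac1t\int_0^t|y(\vartheta_s\omega)|\d s\to 0$ as $t\to\infty$, so $2\sigma\int_0^t|y(\vartheta_s\omega)|\d s = o(t)$. For the second integral one needs a time-averaged bound on $\z^{-2}(s,\omega)\|\Arm(\v_1(s))\|_4^2$. This is where the energy estimate \eqref{UE} of Lemma~\ref{LemmaUe} is used: with $\f=\boldsymbol 0$ that lemma (or rather the differential inequality \eqref{UE4} behind it) shows that $\v_1$ is pathwise globally bounded in $\H$ and that the weighted integral $\int_0^t e^{-\int_\zeta^t(\frac{\sigma^2}{2}-2\sigma y(\vartheta_{\upeta}\omega))\d\upeta}\z^{-2}(\zeta,\omega)\|\Arm(\v_1(\zeta))\|_4^4\d\zeta$ is finite; removing the exponential weight and using Cauchy--Schwarz to pass from the $\L^4$-in-space fourth power to the square, together with the sub-exponential growth of $\z^{-2}$ from \eqref{Z3}, yields $\int_0^t \z^{-2}(s,\omega)\|\Arm(\v_1(s))\|_4^2\d s = o(t)$ as $t\to\infty$. (Concretely: from \eqref{UE4} with $\f=\boldsymbol0$, integrating directly gives $\frac{\nu\varepsilon_0}{2}\int_0^t\|\Arm(\v_1)\|_2^2\d s + \beta\varepsilon_0\int_0^t\z^{-2}\|\Arm(\v_1)\|_4^4\d s \le \|\v_{1,0}\|_2^2 + \int_0^t(2\sigma y(\vartheta_s\omega)-\frac{\sigma^2}{2})\|\v_1\|_2^2\d s$, and the right side is $o(t)$ since $\|\v_1(s)\|_2^2$ stays bounded and $\frac1t\int_0^t y(\vartheta_s\omega)\d s\to0$; then $\int_0^t\z^{-2}\|\Arm(\v_1)\|_4^2\d s \le (\int_0^t\z^{-2}\d s)^{1/2}(\int_0^t\z^{-2}\|\Arm(\v_1)\|_4^4\d s)^{1/2}$ and $\int_0^t\z^{-2}(s,\omega)\d s = o(e^{\epsilon t})$ for every $\epsilon>0$ by \eqref{Z3}, which after a small refinement gives the claimed $o(t)$, or at worst $o(\sigma^2 t/8)$, which still suffices.)

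Combining these estimates, there exists $T(\omega)>0$ such that for all $t\ge T(\omega)$ the exponent is bounded above by $-\frac{\sigma^2}{4}t$, i.e.
\begin{align*}
	\|\v_1(t)-\v_2(t)\|_2^2 \le e^{-\frac{\sigma^2}{4}t}\,\|\v_{1,0}-\v_{2,0}\|_2^2.
\end{align*}
Finally, transferring back to $\u_i$ via $\u_i(t)=\z^{-1}(t,\omega)\v_i(t)$ and $\v_{i,0}=\z(0,\omega)\u_{i,0}$ gives $\|\u_1(t)-\u_2(t)\|_2^2 = \z^{-2}(t,\omega)\|\v_1(t)-\v_2(t)\|_2^2 \le e^{-\frac{\sigma^2}{4}t}\z^{-2}(t,\omega)\z^2(0,\omega)\|\u_{1,0}-\u_{2,0}\|_2^2$; absorbing the factor $\z^{-2}(t,\omega)\z^2(0,\omega)=e^{2\sigma(y(\vartheta_t\omega)-y(\omega))}$, which is sub-exponential in $t$ by \eqref{Z3}, into a further enlargement of $T(\omega)$ (and, if one wants the clean constant in \eqref{U1}, by trading a bit of the decay rate — e.g. starting from a rate $\sigma^2/2$ in the $\v$-estimate) yields \eqref{U1} for all $t\ge T(\omega)$.

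The main obstacle is the bookkeeping in the exponent: one must show that \emph{both} the Ornstein--Uhlenbeck term $\int_0^t|y(\vartheta_s\omega)|\d s$ and the dissipation-generated term $\int_0^t\z^{-2}(s,\omega)\|\Arm(\v_1(s))\|_4^2\d s$ grow strictly slower than $\frac{\sigma^2}{4}t$ along every path. The first is handled purely by the ergodic-type limits in \eqref{Z3}; the second requires genuinely using the a priori bounds of Lemma~\ref{LemmaUe} with zero forcing, and care is needed because $\z^{-2}$ itself can grow (sub-exponentially) in $t$, so one cannot simply bound $\|\Arm(\v_1)\|_4^2$ and pull $\z^{-2}$ out — the Cauchy--Schwarz splitting against the \emph{integrable} weighted quartic term is what makes it work, possibly at the cost of a slightly smaller effective exponential rate than $\sigma^2/4$, which is harmless since we may always shrink the rate and enlarge $T(\omega)$.
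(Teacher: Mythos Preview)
Your argument contains a genuine error in the handling of the Ornstein--Uhlenbeck contribution. You write that \eqref{Z3} gives $\tfrac1t\int_0^t y(\vartheta_s\omega)\d s\to 0$, ``hence $\tfrac1t\int_0^t|y(\vartheta_s\omega)|\d s\to 0$.'' This implication is false: $y(\vartheta_\cdot\omega)$ is a stationary OU process, so by the ergodic theorem $\tfrac1t\int_0^t|y(\vartheta_s\omega)|\d s\to \mathbb{E}|y|>0$, and your exponent then contains a term growing like $2\sigma\,\mathbb{E}|y|\cdot t$, which need not be dominated by $\sigma^2 t$. The fix is simply not to take absolute values in the Gronwall step: the term on the right of \eqref{Conti2} is $\sigma y(\vartheta_t\omega)\|\yfrak\|_2^2$ with its sign, and moving it to the left yields the coefficient $\sigma^2-2\sigma y(\vartheta_t\omega)$, to which \eqref{Z3} applies directly. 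This is precisely what the paper does.

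Your treatment of $\int_0^t\z^{-2}\|\Arm(\v_1)\|_4^2\,\d s$ is also looser than it needs to be, and your stated bounds do not close. The paper obtains a \emph{uniform-in-$t$} bound by a weighted Cauchy--Schwarz: from the variation-of-constants form of \eqref{UE4} with $\f=\boldsymbol 0$ one has $\int_0^t e^{\frac{\sigma^2}{2}s-2\sigma\int_0^s y}\,\z^{-2}\|\Arm(\v_1)\|_4^4\,\d s\le \|\v_{1,0}\|_2^2/(\beta\varepsilon_0)$, and this is paired in Cauchy--Schwarz against $\int_0^t e^{-\frac{\sigma^2}{2}s+2\sigma\int_0^s y}\,\z^{-2}\,\d s$, whose integrand decays exponentially by \eqref{Z3} and hence integrates to a finite $\rho(\omega)^2$. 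Your unweighted splitting $(\int_0^t\z^{-2})^{1/2}(\int_0^t\z^{-2}\|\Arm(\v_1)\|_4^4)^{1/2}$ could be salvaged, but not with the sub-exponential bound $\int_0^t\z^{-2}\d s=o(e^{\epsilon t})$ you invoke: combined with an $O(1)$ (or even $o(t)$) bound on the second factor this gives only $o(e^{\epsilon t/2})$, which is not $o(t)$. You would need at least $\int_0^t\z^{-2}\d s=O(t)$ (which does hold, by ergodicity of the OU process, though this is not contained in \eqref{Z3}) together with the $O(1)$ bound on the quartic integral to reach $O(\sqrt t)=o(t)$.
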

\begin{proof}
	Let  $\v_1(t):=\z(t,\omega)\u_{1}(t)$ and $\v_{2}(t):=\z(t,\omega)\u_{2}(t)$ be two solutions of the system \eqref{CTGF} with respect to the initial data $\v_{1,0}=\z(0,\omega)\u_{1,0}$ and $\v_{2,0}=\z(0,\omega)\u_{2,0}$ at $0$, respectively. Then $\mathfrak{X}(\cdot)=\v_{1}(\cdot)-\v_{2}(\cdot)$ with $\mathfrak{X}(0)=\v_{1,0}-\v_{2,0}$ satisfies
\begin{align}\label{ES1}
	&	\frac{\d\mathfrak{X}(t)}{\d t}+\nu \A\mathfrak{X}(t)  +\left[\frac{\sigma^2}{2}-\sigma y(\vartheta_{t}\omega)\right]\mathfrak{X}(t) 
	\nonumber\\ & = -\z^{-1}(t,\omega)\left\{\B\big(\mathfrak{X}(t),\v_1(t)\big)-\B\big(\v_2(t), \mathfrak{X}(t)\big)\right\} - \alpha \z^{-1}(t,\omega)\left\{\J\big(\v_1(t)\big)-\J\big(\v_2(t)\big)\right\}
	\nonumber\\ & \quad  -\z^{-2}(t,\omega)\beta\left\{\K\big(\v_1(t)\big)-\K\big(\v_2(t)\big)\right\} ,
\end{align}
	for a.e. $t\geq0$ in $\X'$.  A calculation similar to \eqref{Conti6} yields
	\begin{align}\label{ES2}
		&	\frac{\d}{\d t}\|\mathfrak{X}(t)\|_2^2 +\left[\sigma^2 - 2\sigma y(\vartheta_{t}\omega) - \frac{(\M_d)^2}{\nu\varepsilon_0} \z^{-2}(t,\omega)   \|\Arm(\v_1(t))\|^2_{4}\right]\|\mathfrak{X}(t)\|_2^2  
		  \leq  0 ,
	\end{align}
	for a.e. $t\geq0$ and for all $\omega\in\Omega$,  and an application of variation of constant formula implies 
	\begin{align*}
		\|\mathfrak{X}(t)\|^2_{2}&\leq \exp\left\{-\sigma^2t + 2\sigma \int_{0}^{t}y(\vartheta_{r}\omega)\d r+ \frac{(\M_d)^2}{\nu\varepsilon_0} \int_{0}^{t}\z^{-2}(r,\omega)   \|\Arm(\v_1(r))\|^2_{4}\d r \right\}\|\mathfrak{X}(0)\|^2_{2},
	\end{align*} 
	for all $t\geq0$ and $\omega\in\Omega$. Using \eqref{Phi2} and \eqref{Z3}, we can find a time $T_1(\omega)>0$ such that 
	\begin{align}\label{ES3}
		&\|\u_1(t)-\u_2(t)\|^2_{2}
		\nonumber\\&\leq \exp\left\{-\sigma^2t+2\sigma y(\vartheta_{t}\omega) +2\sigma \int_{0}^{t}y(\vartheta_{r}\omega)\d r+\frac{(\M_d)^2}{\nu\varepsilon_0} \int_{0}^{t}\z^{-2}(r,\omega)   \|\Arm(\v_1(r))\|^2_{4}\d r -2\sigma y(\omega) \right\}
		  \|\u_{1,0}-\u_{2,0}\|^2_{2}
		\nonumber\\&\leq \exp\left\{-\frac{\sigma^2}{2}t+\frac{(\M_d)^2}{\nu\varepsilon_0} \int_{0}^{t}\z^{-2}(r,\omega)   \|\Arm(\v_1(r))\|^2_{4}\d r-2\sigma y(\omega) \right\} \|\u_{1,0}-\u_{2,0}\|^2_{2}, 
	\end{align} 
	for all $t\geq T_1(\omega)$ and $\omega\in\Omega$. From $\eqref{CTGF}_1$, \eqref{b0} and Young's inequality, we find (see \eqref{UE4} above)
	\begin{align*}
		& \frac{\d}{\d t}\|\v_1(t)\|_2^2  + \left[\frac{\sigma^2}{2} - 2\sigma y (\vartheta_{t}\omega)\right]\|\v_1(t)\|^2_2 + \beta\varepsilon_0  \z^{-2}(t,\omega)\|\Arm(\v_1(t))\|^4_4  \leq 0, 
	\end{align*}
	which gives (by variation of constant formula)
	\begin{align}\label{ES4}
		& \|\v_1(t)\|^2_{2}e^{\frac{\sigma^2}{2}t-2\sigma\int_{0}^{t}y(\vartheta_{r}\omega)\d r}+\beta\varepsilon_0 \int_{0}^{t}e^{\frac{\sigma^2}{2}s-2\sigma\int_{0}^{s}y(\vartheta_{r}\omega)\d r} \z^{-2}(s,\omega)\|\Arm(\v_1(s))\|^4_4\d s 
		 \leq \|\v_{1,0}\|^2_{2}, 
	\end{align}
	for all $t\geq 0$ and $\omega\in\Omega$.  By \eqref{Z3}, we find $T_2(\omega)>0$ such that for all $t>T_2(\omega)$
	\begin{align}\label{ES5}
		&\int_{0}^{t}e^{-\frac{\sigma^2}{2}s+2\sigma\int_{0}^{s}y(\vartheta_{r}\omega)\d r} \z^{-2}(s,\omega)\d s   = \int_{0}^{t}e^{-\frac{\sigma^2}{2}s+2\sigma\int_{0}^{s}y(\vartheta_{r}\omega)\d r + 2\sigma y(\vartheta_s\omega)} \d s
		\nonumber\\ & = \int_{0}^{T_2(\omega)}e^{-\frac{\sigma^2}{2}s+2\sigma\int_{0}^{s}y(\vartheta_{r}\omega)\d r + 2\sigma y(\vartheta_s\omega)} \d s + \int_{T_2(\omega)}^{t}e^{-\frac{\sigma^2}{2}s+2\sigma\int_{0}^{s}y(\vartheta_{r}\omega)\d r + 2\sigma y(\vartheta_s\omega)} \d s
		\nonumber\\ & \leq  \int_{0}^{T_2(\omega)}e^{-\frac{\sigma^2}{2}s+2\sigma\int_{0}^{s}y(\vartheta_{r}\omega)\d r + 2\sigma y(\vartheta_s\omega)} \d s + \int_{T_2(\omega)}^{t}e^{-\frac{\sigma^2}{4}s} \d s
		\nonumber\\ & \leq  \int_{0}^{T_2(\omega)}e^{-\frac{\sigma^2}{2}s+2\sigma\int_{0}^{s}y(\vartheta_{r}\omega)\d r + 2\sigma y(\vartheta_s\omega)} \d s + \frac{4}{\sigma^2}e^{-\frac{\sigma^2}{4}T_2(\omega)} := [\rho(\omega)]^2 < +\infty.
	\end{align}
Now by H\"older's inequality, \eqref{ES4} and \eqref{ES5}, we obtain for all $t\geq T_2(\omega)$
\begin{align}\label{ES6}
	& \int_{0}^{t}\z^{-2}(s,\omega)   \|\Arm(\v_1(s))\|^2_{4}\d s
	\nonumber\\ & = \int_{0}^{t} e^{-\frac{\sigma^2}{4}s+\sigma\int_{0}^{s}y(\vartheta_{r}\omega)\d r} \z^{-1}(s,\omega) e^{\frac{\sigma^2}{4}s-\sigma\int_{0}^{s}y(\vartheta_{r}\omega)\d r} \z^{-1}(s,\omega)   \|\Arm(\v_1(s))\|^2_{4}\d s
	\nonumber\\ & \leq \left(\int_{0}^{t}e^{-\frac{\sigma^2}{2}s+2\sigma\int_{0}^{s}y(\vartheta_{r}\omega)\d r} \z^{-2}(s,\omega)\d s \right)^{\frac12} \left( \int_{0}^{t}e^{\frac{\sigma^2}{2}s-2\sigma\int_{0}^{s}y(\vartheta_{r}\omega)\d r} \z^{-2}(s,\omega)\|\Arm(\v_1(s))\|^4_4\d s \right)^{\frac12}
	\nonumber\\ & \leq \frac{\rho(\omega) \|\v_{1,0}\|_2}{\sqrt{\beta\varepsilon_0}}.
\end{align}
 
 Now we choose $T_3(\omega)\geq\max\{T_1(\omega),T_2(\omega)\}$, then \eqref{ES3} and \eqref{ES6} give
	\begin{align}\label{ES9}
		&\|\u_1(t)-\u_2(t)\|^2_{2}\leq \exp\left\{-\frac{\sigma^2}{2}t+ \frac{(\M_d)^2}{\nu\varepsilon_0} \frac{\rho(\omega) \|\v_{1,0}\|_2}{\sqrt{\beta\varepsilon_0}} -2\sigma y(\omega) \right\} \|\u_{1,0}-\u_{2,0}\|^2_{\H},
	\end{align}
	for all $t\geq T_3(\omega)$ and $\omega\in\Omega$. Now, we can find a time $T(\omega)\geq T_3(\omega)$ such that 
	\begin{align}\label{ES10}
		\frac{(\M_d)^2}{\nu\varepsilon_0} \frac{\rho(\omega) \|\v_{1,0}\|_2}{\sqrt{\beta\varepsilon_0}} -2\sigma y(\omega) \leq\frac{\sigma^2}{4}t,
	\end{align}
	for all $t\geq T(\omega)$ and $\omega\in\Omega$. By \eqref{ES9}-\eqref{ES10}, we have
	\begin{align*}
		\|\u_1(t)-\u_2(t)\|^2_{\H}\leq e^{-\frac{\sigma^2}{4}t}\|\u_{1,0}-\u_{2,0}\|^2_{\H},  \ \ \ \text{ for all } t\geq T(\omega)\ \text{ and }\ \omega\in\Omega,
	\end{align*}
	which completes the proof.
\end{proof}

Now, we are ready to show the uniqueness of invariant measures for $\f=\boldsymbol{0}$.

\begin{theorem}\label{UEIM}
	Assume that \emph{$\f=\textbf{0}$} and $\u_0\in\H$ be given. Then, there is a unique invariant measure to the system \eqref{STGF} which is ergodic and strongly mixing.
\end{theorem}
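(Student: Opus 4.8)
The plan is to combine the exponential stability estimate from Lemma~\ref{ExpoStability} with the existence result from Theorem~\ref{thm6.3} and a standard uniqueness-via-contraction argument for the transition semigroup $\{\mathrm{T}_t\}_{t\geq 0}$. The existence of at least one invariant measure $\upmu$ follows already from Theorem~\ref{thm6.3} (specialized to $\f=\boldsymbol{0}$, so that $\f\in\H^{-1}(\O)$ or $\f\in\L^2(\O)$ trivially holds), and the random attractor is a compact invariant random set by Theorems~\ref{PRA_B} and~\ref{PRA_U}. Thus only uniqueness, ergodicity, and strong mixing remain. I would deduce all three from a single convergence statement: for every $g\in\mathrm{C}_b(\H)$, every $\x\in\H$, and every sequence $t_n\to\infty$,
\begin{align*}
	\mathrm{T}_{t_n}g(\x)\longrightarrow\int_{\H}g\,\d\upmu.
\end{align*}
Once this is shown, uniqueness is immediate (if $\upmu'$ were another invariant measure, integrate the convergence against $\upmu'$ and use invariance and dominated convergence to get $\int g\,\d\upmu'=\int g\,\d\upmu$ for all $g\in\mathrm{C}_b(\H)$), and ergodicity together with strong mixing follow from the convergence of $\mathrm{T}_t$ to the constant projection onto $\int g\,\d\upmu$ by the classical criterion (see, e.g., \cite{BL,CF,Da_Prato+Zabczyk}).

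First I would reduce the convergence of $\mathrm{T}_tg(\x)$ to a pathwise statement. Using the definition \eqref{71} and the random dynamical system $\Phi$, one has $\mathrm{T}_tg(\x)=\E[g(\Phi(t,\vartheta_{-t}\omega,\x))]$ after the usual change of variables exploiting stationarity of the noise (the map $\omega\mapsto\vartheta_{-t}\omega$ is measure preserving), so that $\mathrm{T}_tg(\x)=\E[g(\u(0;-t,\omega,\x))]$ where $\u(\cdot;-t,\omega,\x)$ solves \eqref{STGF} started at time $-t$ from $\x$. The key point is that Lemma~\ref{ExpoStability}, reformulated via the cocycle property on the interval $[-t,0]$, gives that for any two initial data $\x_1,\x_2\in\H$ there is $T(\omega)>0$ (independent of the data) with
\begin{align*}
	\|\u(0;-t,\omega,\x_1)-\u(0;-t,\omega,\x_2)\|_2^2\leq e^{-\frac{\sigma^2}{4}t}\|\x_1-\x_2\|_2^2,\qquad t\geq T(\omega).
\end{align*}
Hence the pullback limit $\lim_{t\to\infty}\u(0;-t,\omega,\x)$, if it exists for one $\x$, exists for all $\x$ and is independent of $\x$; its existence for a distinguished initial datum follows from the existence of the random attractor (the attractor is a single point here, a conclusion one can also extract directly: by compactness of the absorbing set and the contraction above, the attractor must be a singleton $\{\mathfrak{a}(\omega)\}$). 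Actually, with $\f=\boldsymbol{0}$ the zero solution is a solution of \eqref{STGF}, so $\u\equiv\boldsymbol{0}$ is a stationary trajectory and the contraction estimate forces $\u(0;-t,\omega,\x)\to\boldsymbol{0}$ as $t\to\infty$ for every $\x$ and $\mathbb{P}$-a.e.\ $\omega$; equivalently the random attractor is $\mathscr{A}(\omega)=\{\boldsymbol{0}\}$ and the unique invariant measure is the Dirac mass $\updelta_{\boldsymbol{0}}$.

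Assembling the pieces: by the above, for every $g\in\mathrm{C}_b(\H)$, $g(\u(0;-t,\omega,\x))\to g(\boldsymbol{0})$ pointwise in $\omega$, and since $g$ is bounded, dominated convergence yields $\mathrm{T}_tg(\x)=\E[g(\u(0;-t,\omega,\x))]\to g(\boldsymbol{0})=\int_{\H}g\,\d\updelta_{\boldsymbol{0}}$. This proves that $\updelta_{\boldsymbol{0}}$ is the unique invariant measure (matching Theorem~\ref{thm6.3}, whose invariant measure must therefore coincide with $\updelta_{\boldsymbol{0}}$), and the convergence $\mathrm{T}_tg(\x)\to\int g\,\d\updelta_{\boldsymbol{0}}$ for all $\x$ gives both ergodicity and strong mixing of $\updelta_{\boldsymbol{0}}$ by the standard characterization. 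The main obstacle is purely bookkeeping: one must carefully justify the passage from the forward-in-time estimate of Lemma~\ref{ExpoStability} to the pullback (backward-in-time) statement via the cocycle identity $\Phi(t+s,\omega,\cdot)=\Phi(t,\vartheta_s\omega,\Phi(s,\omega,\cdot))$, and verify that the random time $T(\omega)$ arising there does not interfere with taking $t\to\infty$ (it does not, since $T(\omega)$ is finite for $\mathbb{P}$-a.e.\ $\omega$); everything else is an application of already-established results.
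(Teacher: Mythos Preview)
Your proposal is correct and reaches the same conclusion as the paper, but the route differs in two respects. The paper argues entirely in forward time: for an arbitrary invariant measure $\upmu$ and Lipschitz $\Xi$, it bounds $|\mathrm{T}_t\Xi(\u_0)-\int_{\H}\Xi\,\d\upmu|$ by $L_\Xi\int_\H\E[\|\u(t,\u_0)-\u(t,\v_0)\|_2]\,\upmu(\d\v_0)$ and applies Lemma~\ref{ExpoStability} directly (with the fixed initial datum $\u_0$ playing the role of $\u_{1,0}$), then passes to $\mathrm{C}_b(\H)$ by density of Lipschitz functions; uniqueness is obtained by a second, analogous estimate with two invariant measures $\upmu,\widetilde\upmu$. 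Your argument instead passes to the pullback representation $\mathrm{T}_tg(\x)=\E[g(\u(0;-t,\omega,\x))]$, exploits that $\boldsymbol{0}$ is a stationary solution when $\f=\boldsymbol{0}$, and uses the contraction to show $\u(0;-t,\omega,\x)\to\boldsymbol{0}$ almost surely, whence dominated convergence yields $\mathrm{T}_tg(\x)\to g(\boldsymbol{0})$ for every $g\in\mathrm{C}_b(\H)$ without the Lipschitz--density detour. Your route has the merit of identifying the invariant measure as $\updelta_{\boldsymbol{0}}$ from the outset (the paper records this only in a subsequent remark), while the paper's forward argument is more template-like and would survive in settings where no explicit stationary solution is known.

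One small caution: your claim that the $T(\omega)$ from Lemma~\ref{ExpoStability} is ``independent of the data'' is not literally true as that lemma is stated, since the proof makes $T(\omega)$ depend on $\|\v_{1,0}\|_2$ through \eqref{ES10}. This is harmless for you because you eventually take $\x_1=\boldsymbol{0}$, in which case the data-dependent term vanishes and $T(\omega)$ genuinely depends only on $\omega$; but it is worth saying so explicitly. Likewise, the ``bookkeeping'' you flag---transporting the forward estimate to the pullback interval $[-t,0]$---is best handled not by the cocycle shift (which would produce the condition $t\geq T(\vartheta_{-t}\omega)$ rather than $t\geq T(\omega)$) but by rerunning the energy inequality \eqref{ES2} on $[-t,0]$ with $\v_1\equiv\boldsymbol{0}$ and invoking \eqref{Z3} directly; this yields a pullback $T(\omega)$ depending only on $\omega$ and closes the argument cleanly.
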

\begin{proof}
	For $\Xi\in \text{Lip}(\H)$ (Lipschitz $\Xi$) and an invariant measure $\upmu$, we have for all $t\geq T(\omega)$,
	\begin{align*}
		\left|\mathrm{T}_t\Xi(\u_0)-\int_{\H}\Xi(\v_0)\upmu(\d \v_0)\right|&=	\left|\int_{\H}\E\left[\Xi(\u(t,\u_0))-\Xi(\u(t,\v_0))\right]\upmu(\d \v_0)\right|\nonumber\\&\leq L_{\Xi}\int_{\H}\E\left[\left\|\u(t,\u_0)-\u(t,\v_0)\right\|_{2}\right]\upmu(\d \v_0)\nonumber\\&\leq L_{\Xi}\mathbb{E}\left[e^{-\frac{\sigma^2}{8}t}\right]\int_{\H}\left\|\u_0-\v_0\right\|_{2}\upmu(\d \v_0)
		\to 0 \ \text{ as }\  t\to \infty,
	\end{align*}
	since $\int_{\H}\|\v_0\|_{2}\upmu(\d \v_0)+\int_{\H}\|\u_0\|_{2}\upmu(\d \v_0)<+\infty$. Hence, we conclude
	\begin{align}\label{U2}
		\lim_{t\to\infty}\mathrm{T}_t\Xi(\u_0)=\int_{\H}\Xi(\v_0)\d\upmu(\v_0), \ \upmu\text{-a.s., for all }\ \u_0\in\H\ \text{ and }\  \Xi\in\C_b(\H),
	\end{align} 
	by the density of $\text{Lip}(\H)$ in $\C_b (\H)$.  Since we have already established the stronger result that $\mathrm{T}_t\Xi(\u_0)$ converges to the equilibrium at an exponential rate, the system enjoys what is referred to as the \emph{exponential mixing property}. Now, let $\widetilde{\upmu}$ be another invariant measure. Then, for every $t \geq T(\omega)$, we obtain
	\begin{align}
		\left|\int_{\H}\Xi(\u_0)\upmu(\d\u_0)-\int_{\H}\Xi(\v_0)\widetilde{\upmu}(\d \v_0)\right| 
		&= \left|\int_{\H}\mathrm{T}_t\Xi(\u_0)\upmu(\d \u_0)-\int_{\H}\mathrm{T}_t\Xi(\v_0)\widetilde{\upmu}(\d \v_0)\right|\nonumber\\&=\left|\int_{\H}\int_{\H}\left[\mathrm{T}_t\Xi(\u_0)-\mathrm{T}_t\Xi(\v_0)\right]\upmu(\d \u_0)\wi\upmu(\d \v_0)\right|\nonumber\\&=\left|\int_{\H}\int_{\H}\E\left[\Xi(\u(t,\u_0))-\Xi(\u(t,\v_0))\right]\upmu(\d \u_0)\wi\upmu(\d \v_0)\right|\nonumber\\&\leq L_{\Xi}\int_{\H}\int_{\H}\E\left[\left\|\u(t,\u_0)-\u(t,\v_0)\right\|_{2}\right]\upmu(\d \u_0)\wi\upmu(\d \v_0) 
		\nonumber\\&\leq L_{\Xi}\mathbb{E}\left[e^{-\frac{\sigma^2}{8}t}\right] \int_{\H}\int_{\H}\E\left[\left\|\u_0-\v_0\right\|_{2}\right]\upmu(\d \u_0)\wi\upmu(\d \v_0) 
		\to 0 \ \text{ as }\ t\to\infty,
	\end{align}
	since $\int_{\H}\|\u_0\|_{2}\upmu(\d \u_0)<+\infty$ and $\int_{\H}\|\v_0\|_{2}\widetilde{\upmu}(\d \v_0)<+\infty$.  Since $\upmu$ is the unique invariant measure for $(\mathrm{T}_t)_{t\geq 0}$, it follows from \cite[Theorem 3.2.6]{GDJZ} that $\upmu$ is ergodic also. 
\end{proof}

\begin{remark}
	Since the zero function is a solution of \eqref{STGF} corresponding to zero initial data and vanishing forcing $\f=\boldsymbol{0}$, Lemma~\ref{ExpoStability} implies that every nontrivial solution of \eqref{STGF} converges exponentially fast to $\boldsymbol{0}$ as $t \to \infty$. Consequently, when $\f=\boldsymbol{0}$, the only ergodic invariant measure of the system \eqref{STGF} is the trivial one (the Dirac delta measure concentrated at zero) which is clearly invariant because the zero solution is the equilibrium state of the system.
\end{remark}

	\begin{appendix}
		\renewcommand{\thesection}{\Alph{section}}
		\numberwithin{equation}{section}
		
		\section{Retrieval of Pressure }\label{PR} \numberwithin{equation}{section}\setcounter{equation}{0}
		We know that the system \eqref{CTGF} has unique weak solution in the sense of Definition \ref{def-CTGF} (see Lemma \ref{lem-Sol-CTGF}). In order to retrieve the pressure, we follow the approach which has been carried out in the work \cite[Section 8]{SS11}. 
		
		Let us choose $\mathfrak{r}\in\R$ and $\omega\in\Omega$. Let $\C_c^{\infty}(\O;\R^d)$ be the space of all infinitely differentiable functions  ($\R^d$-valued) with compact support in $\O\subseteq\R^d$.
		Let us introduce the space of vector-valued test functions  $\mathrm{C}_c^{\infty}(\mathfrak{r},\mathfrak{r}+T;\C_c^{\infty}(\O;\R^d))$, which consists of all infinitely differentiable $\C_c^{\infty}(\O;\R^d)$-valued functions having compact support in $(\mathfrak{r},\mathfrak{r}+T)$. Moreover, we denote by $\mathscr{D}'((\mathfrak{r},\mathfrak{r}+T) \times \O)$, the dual space of $\mathrm{C}_c^{\infty}(\mathfrak{r},\mathfrak{r}+T;\C_c^{\infty}(\O;\R^d))$. Let us define a functional $\upchi\in\mathscr{D}'((\mathfrak{r},\mathfrak{r}+T) \times \O)$ by
		\begin{align}\label{press8}
			&\int_\mathfrak{r}^{\mathfrak{r}+T} \langle\upeta,\varphi\rangle\d t\nonumber\\&:=\int_\mathfrak{r}^{\mathfrak{r}+T} 
			\biggl\{\bigg\langle\z^{-1}(t,\omega)\frac{\partial\v}{\partial t} - \nu \z^{-1}(t,\omega) \Delta \v +\left[\frac{\sigma^2}{2}-\sigma y(\vartheta_{t}\omega)\right]\z^{-1}(t,\omega) \v + \z^{-2}(t,\omega)(\v\cdot \nabla)\v  
			\nonumber	\\ & \qquad - \alpha \z^{-2}(t,\omega) \text{div}((\Arm(\v))^2) 
		  -  \beta \z^{-3}(t,\omega)  \text{div}(|\Arm(\v)|^2\Arm(\v))  -  \f,\varphi\bigg\rangle \biggr\}\d t,
		\end{align}
		for all $\varphi\in\mathrm{C}_c^{\infty}(\mathfrak{r},\mathfrak{r}+T;\C_c^{\infty}(\O;\R^d))$.  Since $\v \in  \mathrm{C}([\mathfrak{r},\mathfrak{r}+T]; \H) \cap \mathrm{L}^{2}(\mathfrak{r},\mathfrak{r}+T; \V)\cap \mathrm{L}^{4} (\mathfrak{r},\mathfrak{r}+T; \mathbb{W}^{1,4}(\O))$ and $\frac{\partial\v}{\partial t} \in \mathrm{L}^{2} (\mathfrak{r},\mathfrak{r}+T;\V') + \mathrm{L}^{\frac43} (\mathfrak{r},\mathfrak{r}+T;\mathbb{W}^{-1,\frac43}(\O))$, we have (see Lemma \ref{lem-Sol-CTGF})
		\begin{align}
			\left| \int_{\mathfrak{r}}^{\mathfrak{r}+T} \langle\upeta,\varphi\rangle\d t \right| \leq C(\mathfrak{r},\omega, \alpha, \beta, \nu, T, \|\v_{\mathfrak{r}}\|_{2}, \|\f\|_{\mathrm{L}^2_{\mathrm{loc}}(\R;\H^{-1})})) \|\varphi\|_{\mathrm{L}^{2}(\mathfrak{r},\mathfrak{r}+T; \H^1_0)\cap \mathrm{L}^{4} (\mathfrak{r},\mathfrak{r}+T; \mathbb{W}^{1,4})},
		\end{align}
		for all $\varphi\in\mathrm{C}_c^{\infty}(\mathfrak{r},\mathfrak{r}+T;\C_c^{\infty}(\O;\R^d))$. Therefore, it follows that
		\begin{align*}
			\upeta\in \mathrm{L}^{2} (\mathfrak{r},\mathfrak{r}+T;\H^{-1}(\O)) + \mathrm{L}^{\frac43} (\mathfrak{r},\mathfrak{r}+T;\mathbb{W}^{-1,\frac43}(\O)).
		\end{align*}
		Since $\v(\cdot)$ is unique solution of system \eqref{CTGF} in the sense of Definition \ref{def-CTGF},  \eqref{press8} gives 
		\begin{align}\label{press11}
			\int_\mathfrak{r}^{\mathfrak{r}+T} \langle\upeta,\varphi\rangle\d t=0, \  \text{ for all } \ \varphi\in \mathrm{C}_c^{\infty}(\mathfrak{r},\mathfrak{r}+T;\mathscr{V}).
		\end{align}
		Next, we choose a test function of the form $\varphi(x,t)=\upxi(t)\psi(x)$ with $\upxi\in\mathrm{C}_c^{\infty}(\mathfrak{r},\mathfrak{r}+T)$ and $\psi\in\mathscr{V}$. Then, from \eqref{press11}, we have 
		\begin{align*}
			\int_\mathfrak{r}^{\mathfrak{r}+T}\upxi(t)\langle\upeta,\psi\rangle\d t=0, \ \text{ for all } \  \upxi\in\mathrm{C}_c^{\infty}(\mathfrak{r},\mathfrak{r}+T) \  \text{ and for all } \  \psi\in\mathscr{V}.
		\end{align*}
		Then, we deduce that $\upeta$ satisfies the following equation   for a.e. $t\in[\mathfrak{r},\mathfrak{r}+T]$:
		\begin{align}\label{press12}
			& \langle\upeta,\psi\rangle 
			\nonumber\\ &=  
			\bigg\langle\z^{-1}(t,\omega)\frac{\partial\v}{\partial t} - \nu \z^{-1}(t,\omega) \Delta \v +\left[\frac{\sigma^2}{2}-\sigma y(\vartheta_{t}\omega)\right]\z^{-1}(t,\omega) \v + \z^{-2}(t,\omega)(\v\cdot \nabla)\v  
			\nonumber	\\ & \qquad - \alpha \z^{-2}(t,\omega) \text{div}((\Arm(\v))^2) 
			-  \beta \z^{-3}(t,\omega)  \text{div}(|\Arm(\v)|^2\Arm(\v))  -  \f,\psi\bigg\rangle, 
		\end{align}
		for all $ \psi\in  \C_c^{\infty}(\O;\R^d)$,  and 
		\begin{align*}
			\langle\upeta ,\psi\rangle=0,  \  \text{ for all } \ \psi\in\mathscr{V}.
		\end{align*}
		Finally, by an application of \cite[Proposition 1.1]{Temam_1984}, there exists a ${\mathbf{P}} \in \mathscr{D}'((\mathfrak{r},\mathfrak{r}+T)\times \O)$ such that
		$$\upeta = - \nabla  {\mathbf{P}} \in \mathrm{L}^{2} (\mathfrak{r},\mathfrak{r}+T;\H^{-1}(\O)) + \mathrm{L}^{\frac43} (\mathfrak{r},\mathfrak{r}+T;\mathbb{W}^{-1,\frac43}(\O)).$$
		
		\begin{remark}\label{rem-pressure}
			Indeed, we have 
			\begin{align*}
				\nabla  {\mathbf{P}} 
		 & = (\mathrm{I}- \mathcal{P}) 
				\left\{  \nu \z^{-1}(t,\omega) \Delta \v   - \z^{-2}(t,\omega)(\v\cdot \nabla)\v  
				 + \alpha \z^{-2}(t,\omega) \mathrm{div}((\Arm(\v))^2)  \right.
			\nonumber\\ & \qquad \qquad \quad  \left.	+ \beta \z^{-3}(t,\omega)  \mathrm{div}(|\Arm(\v)|^2\Arm(\v))  + \f\right\}
					\nonumber\\ & =:\nabla  {\mathbf{P}}_1 + \nabla  {\mathbf{P}}_2,
			\end{align*}
			where 
			\begin{align*}
				\nabla  {\mathbf{P}}_1 & = (\mathrm{I}- \mathcal{P}) \left\{  \nu \z^{-1}(t,\omega) \Delta \v   - \z^{-2}(t,\omega)(\v\cdot \nabla)\v  
				+ \alpha \z^{-2}(t,\omega) \mathrm{div}((\Arm(\v))^2)  + \f \right\} \\
				\nabla  {\mathbf{P}}_2 & = \beta \z^{-3}(t,\omega) (\mathrm{I}- \mathcal{P})  \{\mathrm{div}(|\Arm(\v)|^2\Arm(\v))\}.
			\end{align*}
			Note that, by Lemmas \ref{LemmaUe} and \ref{LemmaUe-24}, we have for all  $t\geq \widetilde{\mathcal{T}} := \max\{\mathcal{T},\widehat{\mathcal{T}}\}$ and $\omega\in\Omega$ that
		\begin{align}\label{estimate-pressure3}
			\int_{\mathfrak{r}-t}^{\mathfrak{r}}e^{-\int^{\mathfrak{r}}_{\zeta}\left(\frac{\sigma^2}{2}-2\sigma y(\vartheta_{\upeta-\mathfrak{r}}\omega)\right)\d \upeta}\z^2 (\zeta,\vartheta_{-\mathfrak{r}}\omega)\|\nabla\mathbf{P}_1(\zeta;\mathfrak{r}-t,\vartheta_{-\mathfrak{r}}\omega,\v_{\mathfrak{r}-t})\|^2_{\H^{-1}}\d\zeta < +\infty,
		\end{align}
	and 
	\begin{align}\label{estimate-pressure4}
		\int_{\mathfrak{r}-t}^{\mathfrak{r}}e^{-\int^{\mathfrak{r}}_{\zeta}\left(\frac{\sigma^2}{2}-2\sigma y(\vartheta_{\upeta-\mathfrak{r}}\omega)\right)\d \upeta}\z^2(\zeta,\vartheta_{-\mathfrak{r}}\omega)\|\nabla\mathbf{P}_2(\zeta;\mathfrak{r}-t,\vartheta_{-\mathfrak{r}}\omega,\v_{\mathfrak{r}-t})\|^{\frac43}_{\mathbb{W}^{-1,\frac43}}\d\zeta < +\infty.
	\end{align}
The boundedness obtained in \eqref{estimate-pressure1} and \eqref{estimate-pressure2} helps us to establish the uniform tail estimates for the solution of system \eqref{CTGF}.
		\end{remark}

	\end{appendix}

	\medskip\noindent
	{\bf Acknowledgments:}     This work is funded by national funds through the FCT – Fundação para a Ciência e a Tecnologia, I.P., under the scope of the projects UID/297/2025 and UID/PRR/297/2025 (Center for Mathematics and Applications - NOVA Math). K. Kinra would like to thank Prof. Fernanda Cipriano, Center for Mathematics and Applications (NOVA Math) and Department of Mathematics, NOVA School of Science and Technology (NOVA FCT), Portugal, for useful discussions. K. Kinra would like to thank Prof. Manil T. Mohan, Department of Mathematics, Indian Institute of Technology Roorkee, Roorkee, India, for introducing him into the research area of attractor theory.

	\medskip\noindent
	\textbf{Data availability:} No data was used for the research described in the article.
	
	\medskip\noindent
	\textbf{Declarations}: During the preparation of this work, the authors have not used AI tools.
	
	\medskip\noindent
	\textbf{Author Contributions}: The sole author wrote, and edited the entire manuscript.
	
	\medskip\noindent
	\textbf{Conflict of interest:} The author declares no conflict of interest.

\end{document}